\newtheorem{theorem}{Theorem}[section]
\newtheorem{proposition}[theorem]{Proposition}
\newtheorem{lemma}[theorem]{Lemma}
\newtheorem{corollary}[theorem]{Corollary}
\theoremstyle{remark}
\newtheorem*{remark}{Remark}
\newtheorem*{notation_remark}{Notation remark}
\theoremstyle{definition}
\newtheorem{definition}{Definition}
\numberwithin{equation}{section}
\newcommand{\normH}[2]{\norm{#2}_{\dot{H}_{#1}}}
\newcommand{\Gam}[2]{\Gamma\biggl[\genfrac{}{}{0pt}{}{#1}{#2}\biggr]}
\newcommand{\FO}[3]{{}_1F_1\biggl[\genfrac{}{}{0pt}{}{#1}{#2}\biggm|#3\biggr]}
\newcommand{\R}{\mathbb{R}}
\newcommand{\N}{\mathbb{N}}
\renewcommand{\Re}{\mathfrak{Re}}
\renewcommand{\Im}{\mathfrak{Im}}
\newcommand{\I}{\mathbb{I}}
\newcommand{\F}{\mathcal{F}}
\renewcommand{\P}{\mathbb{P}}
\newcommand{\E}{\mathbb{E}}
\newcommand{\CT}{\mathcal{T}_s}
\newcommand{\CO}[1]{G_{#1}}
\DeclarePairedDelimiter\abs{\lvert}{\rvert}
\DeclarePairedDelimiter\norm{\lVert}{\rVert}
\DeclarePairedDelimiter\bra{(}{)}
\DeclareMathOperator{\sign}{sgn}
\DeclareMathOperator{\Tr}{Tr}
\DeclareMathOperator{\supp}{supp}
\DeclareMathOperator{\Conf}{Conf}
\title{Central limit theorem for the determinantal point process with the confluent hypergeometric kernel}
\author{Sergei M. Gorbunov\footnote{Steklov Mathematical Institute of Russian Academy of Sciences, Moscow, Russia\\
This work was supported by the Russian Science Foundation under grant no. 24-71-10109, https://rscf.ru/en/project/24-71-10109/.}}
\date{}
\begin{document}
\maketitle
\begin{abstract}
We consider the convergence of additive functionals under the determinantal point pro-
cess with the confluent hypergeometric kernel, corresponding to a sufficiently smooth function
$f(x/R)$, as $R\to\infty$. We show that these functionals approach Gaussian distribution and give an
estimate on the Kolmogorov-Smirnov distance. To obtain these results we derive an exact identity for expectations of multiplicative functionals in terms of Fredholm determinants.
\end{abstract}
\section{Introduction}
Fix a complex number $s$ such that $\Re s >-1/2$. For $x\ne y\in\R$ consider the following kernel
\begin{equation}\label{1_eq:CHK_def}
        K^s(x, y) = \rho(x)\rho(y)\frac{Z_s(x)\overline{Z_s(y)}-e^{i(x-y)}\overline{Z_s(x)}Z_s(y)}{2\pi i(y-x)},
        \end{equation}
        where
            \begin{align*}
    &\Gam{a, b,\ldots}{c, d\ldots} = \frac{\Gamma(a)\Gamma(b)\ldots}{\Gamma(c)\Gamma(d)\ldots},\quad \rho(x)=\abs{x}^{\Re s}e^{-\frac{\pi}{2}\Im s\sign x},\\
    &Z_s(x) = \Gam{1+s}{1+2\Re s}\FO{\bar{s}}{1+2\Re s}{ix},
    \end{align*}
    and ${}_1F_1$ stands for the confluent hypergeometric function \cite[Sect.~13]{AS_64}, defined by the formula
    \[
    \FO{a}{b}{z} = \sum_{k=0}^\infty \frac{(a)_k}{(b)_kk!}z^k,\qquad (a)_k = a(a+1)\ldots (a+k-1).
    \]
    
    For $x=y$ define $K^s(x, x)$ by the L'H\^opital rule. The kernel induces a locally trace class operator of orthogonal projection on $L_2(\R)$ (see Theorem \ref{3:Diagonalization} or \cite[Corollary~1]{B_23}) and by the Macchi-Soshnikov Theorem \cite{M_75, S_00} induces a determinantal point process $\P^s$.
    
    Recall that $\P^s$ is a measure on discrete subsets of $\R$ without accumulation points, which we denote by $\Conf(\R)$. For a compactly supported Borel bounded function define an additive functional
    \[
    S_f(X) = \sum_{x\in X}f(x), \quad X\in\Conf(\R).
    \]
    Under the measure $\P^s$ it becomes a random variable, which has all moments for $f\in L_\infty(\R)\cap L_1(\R)$. One may regularize it by subtracting the expectation
    \[
    \overline{S_f}(X) = S_f(X)-\E^s S_f.
    \]
    The regularized additive functional may be extended by continuity to functions $f\in L_\infty(\R)\cap L_2(\R)$. In case $f$ is not absolutely integrable $\overline{S}_f$ is $\P^s$-almost surely equal to
    \[
    \overline{S_f}(X) = \lim_{n\to\infty}\left(\sum_{\substack{x\in X \\ \abs{x}\le k_n}}f(x) - \int_{-k_n}^{k_n} K^s(x, x)f(x)dx\right),
    \]
    for some depending on $f$ sequence satisfying $k_n\to \infty$ as $n\to\infty$ (see Definition \ref{3:reg_def}).
    
    Recall that $p$-Sobolev space is a Hilbert space of functions with the norm
    \[
    \norm{f}_{H_p} = \norm{f}_{L_2}+\normH{p}{f}, \quad \normH{p}{f}^2 = \int_\R\abs{\lambda}^{2p}\abs{\hat{f}(\lambda)}^2d\lambda.
    \]
    Here and subsequently we adopt the following convention for the Fourier transform
    \[
    \hat{f}(\omega)= \frac{1}{\sqrt{2\pi}}\F f(\omega)=\frac{1}{2\pi}\int_\R e^{-i\omega x}f(x)dx.
    \]
    \begin{theorem}\label{1:mult_formula}
        For any $f\in H_2(\R)$ we have
        \begin{equation}\label{1_eq:mult_formula}
            \E^se^{\overline{S}_f} = \exp\left(\int_{\R_+}\lambda\hat{f}(\lambda)\hat{f}(-\lambda)d\lambda\right)Q(f),
        \end{equation}
        where for some independent of $f$ constant $C$ we have
        \begin{align*}
        &\abs{Q(f)-1}\le CL(f)e^{CL(f)},\\
        &L(f) = (\normH{3/4}{f}+ \normH{2}{f})e^{8\norm{\hat{f}}_{L_1}}(\normH{3/4}{f}+ \normH{2}{f} + 1).
        \end{align*}
    \end{theorem}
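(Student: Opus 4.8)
The plan is to translate the probabilistic statement into operator theory: represent $\E^s e^{\overline{S}_f}$ as a regularized Fredholm determinant, reduce it to a Toeplitz-type compression determinant, and then prove an operator strong-Szeg\H{o} expansion for the projection $K^s$ in which the quadratic term reproduces the Gaussian factor and all remaining terms are controlled by Sobolev norms of $f$. Throughout I write $P=K^s$ and let $M_f$ denote multiplication by $f$.

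First I would establish the determinantal identity. For $f$ smooth and compactly supported, the Macchi--Soshnikov theory gives $\E^s\prod_{x\in X}e^{f(x)}=\det(I+(e^f-1)K^s)$ with $(e^f-1)K^s$ trace class; dividing by $e^{\E^s S_f}=e^{\Tr(M_fP)}$ yields $\E^s e^{\overline{S}_f}=e^{-\Tr(M_fP)}\det(I+(e^f-1)P)$. Since $P$ is an orthogonal projection, a block-triangular computation in the splitting $\operatorname{Ran}P\oplus\ker P$ gives the exact identity $\det(I+(e^f-1)P)=\det\bigl(Pe^fP\big|_{\operatorname{Ran}P}\bigr)$, so that $\E^s e^{\overline{S}_f}=\exp(-\Tr(M_fP))\det(Pe^fP|_{\operatorname{Ran}P})$. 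To reach $f\in H_2$ I would approximate by compactly supported functions and pass to the limit using the continuity of $\overline{S}_f$ recorded before the statement together with the continuity of the right-hand side in the Hilbert--Schmidt / Carleman-regularized-determinant topology; the truncation in the displayed definition of $\overline{S}_f$ is exactly what makes the limit exist when $f\notin L_1$.

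Next I would extract the Gaussian factor by a one-parameter deformation. Set $A_t=Pe^{tf}P$ on $\operatorname{Ran}P$, which is invertible with $\norm{A_t^{-1}}\le e^{\abs{t}\norm{f}_{L_\infty}}$ since $e^{tf}$ is bounded below, and put $\phi(t)=\log\det A_t$. Then $\phi(0)=0$, $\phi'(0)=\Tr(M_fP)$, and Taylor's formula with integral remainder gives
\[
\log\det(Pe^fP)-\Tr(M_fP)=\tfrac12\phi''(0)+\int_0^1(1-t)\bigl(\phi''(t)-\phi''(0)\bigr)\,dt .
\]
A direct computation yields the (complex) variance $\phi''(0)=\Tr\bigl(Pf(I-P)fP\bigr)$ and $\phi''(t)=\Tr(A_t^{-1}Pf^2e^{tf}P)-\Tr\bigl((A_t^{-1}Pfe^{tf}P)^2\bigr)$. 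Thus $\log Q(f)$ equals $\tfrac12\phi''(0)-\int_{\R_+}\lambda\hat f(\lambda)\hat f(-\lambda)\,d\lambda$ plus the remainder integral, and the theorem reduces to two estimates: (i) the variance $\tfrac12\Tr(Pf(I-P)fP)$ agrees with the universal value $\int_{\R_+}\lambda\hat f(\lambda)\hat f(-\lambda)\,d\lambda$ up to an error bounded by $C(\normH{3/4}{f}+\normH{2}{f})^2$; and (ii) the remainder integral is bounded by $CL(f)$.

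For (ii) I would expand each $\phi''(t)-\phi''(0)$ via the resolvent identity for $A_t^{-1}$ and the bound $\norm{e^{tf}-1}_{L_\infty}\le\norm{f}_{L_\infty}e^{\norm{f}_{L_\infty}}$, reducing everything to traces of products of the commutator $[P,M_f]$ and of $P(e^{tf}-1)P$; the operator norms of $A_t^{-1}$ and $M_{e^{tf}}$ produce the factor $e^{8\norm{\hat f}_{L_1}}$ (using $\norm{f}_{L_\infty}\le\norm{\hat f}_{L_1}$), while the Hilbert--Schmidt and trace-class norms of $[P,M_f]$ and related operators produce the Sobolev factors; the whole must be arranged so that each surviving term carries at least one \emph{supercritical} factor $\normH{3/4}{f}$ or $\normH{2}{f}$, which is precisely what the shape of $L(f)$ encodes and what forces $Q(f)\to1$ under the scaling $f\mapsto f(\cdot/R)$. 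The hard part will be step (i). Because $K^s$ is not translation invariant, one cannot Fourier-diagonalize directly, and the main obstacle is to show that $\tfrac12\phi''(0)=\tfrac12\Tr(Pf(I-P)fP)$ --- which for real $f$ equals $\tfrac14\iint\abs{f(x)-f(y)}^2\abs{K^s(x,y)}^2\,dx\,dy$ --- has leading part governed only by the $1/(x-y)^2$ singularity of $\abs{K^s(x,y)}^2$ on the diagonal, reproducing exactly $\int_{\R_+}\lambda\hat f(\lambda)\hat f(-\lambda)\,d\lambda$, with the off-diagonal and lower-order pieces absorbed into the $\normH{3/4}{f}+\normH{2}{f}$ error. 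This is where the diagonalization of $K^s$ (Theorem \ref{3:Diagonalization}) and the asymptotics of $Z_s$ and the weight $\rho$ enter, and it explains why the error is measured in norms strictly stronger than the scale-invariant $\normH{1/2}{f}$. Finally, assembling (i) and (ii) gives $\abs{\log Q(f)}\le CL(f)$, whence $\abs{Q(f)-1}\le\abs{\log Q(f)}e^{\abs{\log Q(f)}}\le CL(f)e^{CL(f)}$, as claimed.
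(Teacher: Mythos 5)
Your opening reduction (passing to $\det\bigl(Pe^fP|_{\operatorname{Ran}P}\bigr)$ and regularizing) matches the paper's Proposition \ref{2:mult_formula}, and your step (i) is of the right shape: it corresponds to the analytic work the paper does in Section \ref{sect:tr_class} comparing the transform $\CT$ with the Fourier transform. The genuine gap is step (ii). The Taylor remainder $\int_0^1(1-t)\bigl(\phi''(t)-\phi''(0)\bigr)\,dt$ encodes the third and higher cumulants of $S_f$, and you propose to bound it by Schatten norms of $[P,M_f]$ and $P(e^{tf}-1)P$ together with operator norms. This cannot produce the bound $CL(f)$: the quantities such bounds yield, e.g.\ $\norm{[P,M_f]}_{\mathcal{J}_2}^2$ (which is twice the variance of $S_f$) or $\norm{[P,M_f]}_{\mathcal{J}_1}$, are critical or worse --- under the scaling $f\mapsto f(\cdot/R)$ they stay bounded away from zero, whereas $L(f(\cdot/R))\asymp R^{-1/4}\to 0$. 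Already for $s=0$ the third cumulant $\phi'''(0)$ is a signed combination of order-one traces like $\Tr(PfQfQfP)$ and $\Tr(PfQfPfQ)$ with $Q=I-P$, and its smallness is a \emph{cancellation}, not a size estimate; this is exactly the subtle point of Soshnikov's CLT. The paper supplies this cancellation algebraically: writing $f=f_++f_-$, the factorization $G_{fg}=G_fG_g$ for symbols with Fourier support in a common half-line (Theorem \ref{4:WH_properties}) together with the Ehrhardt--Helton--Howe formula (Theorem \ref{B:helton_howe}) shows that for Wiener--Hopf operators every correction beyond the Gaussian term vanishes identically, and the Jacobi--Dodgeson identity (Theorem \ref{B:JD}) converts what survives into the explicit determinant of Theorem \ref{1:remainder_formula}; only then do the supercritical norms $\normH{3/4}{f}+\normH{2}{f}$ enter, through the trace-norm estimates of Lemmata \ref{2:diff_tr_est} and \ref{6:rem_est} for $\mathcal{R}_f=G_f-W_f$ and for $[\CO{e^{-f_+}},\CO{e^{-f_-}}]$. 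Your plan never engages the decomposition $f=f_++f_-$ in step (ii), so it has no mechanism generating the decay that the shape of $L(f)$ demands.

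Two secondary problems. First, the theorem must hold for complex-valued $f$ (Corollary \ref{1:KS_conv} applies it to $ikf$ with $f$ real), and then $A_t=Pe^{tf}P$ need not be invertible along the whole path $t\in[0,1]$: for unimodular $e^{tf}$ your lower bound on $A_t^{-1}$ fails, so $\phi(t)=\log\det A_t$ may be undefined; at minimum you would need a bootstrap argument restricted to small $L(f)$, which the paper avoids entirely by working with exact determinant identities rather than logarithms along a path. Second, the specific factor $e^{8\norm{\hat{f}}_{L_1}}$ in $L(f)$ arises from operator-norm bookkeeping of the explicit factors $W_{e^{f_\pm}}$, $\CO{e^{-f_\pm}}$ and of Sobolev norms of $e^{-f_\pm}-1$ in the paper's formula for $Q(f)$ (Proposition \ref{6:sob_prop}); your sketch has no corresponding accounting, so even the constants in the statement are not within reach of the argument as proposed.
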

    
    Having the estimate for the convergence of Laplace transform, it is possible to establish the convergence in the Kolmogorov-Smirnov metric using the Feller smoothing estimate \cite[p.~538]{F_66} (see Section \ref{sect:corr_proof}). Denote 
    \[
    F_R(x) = \P^s(\overline{S}_{f(x/R)}\le x),\quad F_{\mathcal{N}}(x)=\frac{1}{\sqrt{2\pi}}\int_{-\infty}^xe^{-\frac{t^2}{2}}dt.
    \]
    \begin{corollary}\label{1:KS_conv}
    Let $f\in H_2(\R)$ be a real-valued function. Normalize it so that
    \[
    \int_{\R_+}\lambda\hat{f}(\lambda)\hat{f}(-\lambda)d\lambda=1/2
    \]
    holds. Then we have that there exists a constant $C$ such that
    \[
    \sup_{x\in\R}\abs{F_R(x)-F_{\mathcal{N}}(x)}\le\frac{C}{\ln R}.
    \]
    \end{corollary}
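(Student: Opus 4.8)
\emph{Proof proposal.} The plan is to convert the Laplace-transform identity of Theorem \ref{1:mult_formula} into a bound on the characteristic function of $\overline{S}_{f(\cdot/R)}$, and then feed this into the Feller smoothing inequality to control the Kolmogorov--Smirnov distance. First I would fix $t\in\R$ and apply Theorem \ref{1:mult_formula} to $g_t:=itf(\cdot/R)\in H_2(\R)$. Since $\overline{S}$ is linear in its argument, $\overline{S}_{g_t}=it\,\overline{S}_{f(\cdot/R)}$, so the left-hand side of \eqref{1_eq:mult_formula} is exactly the characteristic function $\phi_R(t):=\E^s e^{it\overline{S}_{f(\cdot/R)}}$. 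Using the convention $\widehat{f(\cdot/R)}(\lambda)=R\hat f(R\lambda)$ and the change of variables $\mu=R\lambda$, the Gaussian exponent collapses to
\[
\int_{\R_+}\lambda\,\widehat{g_t}(\lambda)\widehat{g_t}(-\lambda)\,d\lambda = -t^2\int_{\R_+}\mu\,\hat f(\mu)\hat f(-\mu)\,d\mu = -\frac{t^2}{2}
\]
by the imposed normalization. Hence $\phi_R(t)=e^{-t^2/2}Q(g_t)$, and everything reduces to controlling $Q(g_t)-1$.

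The second step is to track the $R$-dependence of $L(g_t)$. A direct computation of the homogeneous Sobolev norms under rescaling gives $\normH{p}{g_t}=\abs{t}R^{1/2-p}\normH{p}{f}$, so that $\normH{3/4}{g_t}=\abs{t}R^{-1/4}\normH{3/4}{f}$ and $\normH{2}{g_t}=\abs{t}R^{-3/2}\normH{2}{f}$ both decay in $R$; by contrast the $L_1$-norm of the Fourier transform is scale invariant, $\norm{\widehat{g_t}}_{L_1}=\abs{t}\,\norm{\hat f}_{L_1}$, and it is finite since $f\in H_2(\R)$ forces $\hat f\in L_1(\R)$ by Cauchy--Schwarz against $(1+\abs{\lambda}^2)^{-1/2}$. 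Writing $A=\normH{3/4}{f}+\normH{2}{f}$ and $b=8\norm{\hat f}_{L_1}$, this yields for $R\ge 1$ the bound $L(g_t)\le \abs{t}R^{-1/4}A\,e^{b\abs{t}}\bra{\abs{t}A+1}$.

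The decisive step is the smoothing estimate. Feller's inequality \cite[p.~538]{F_66}, applied with the standard normal as comparison law (so that $\sup_x\abs{F_{\mathcal N}'(x)}=(2\pi)^{-1/2}$), gives for every $T>0$
\[
\sup_{x\in\R}\abs{F_R(x)-F_{\mathcal N}(x)} \le \frac{1}{\pi}\int_{-T}^{T}\frac{\abs{\phi_R(t)-e^{-t^2/2}}}{\abs{t}}\,dt + \frac{C}{T}.
\]
I would take the truncation $T=c\ln R$ with $c=1/(64\norm{\hat f}_{L_1})$, so that $e^{b\abs{t}}\le R^{1/8}$ on $[-T,T]$ and therefore $L(g_t)\le A R^{-1/8}T\bra{TA+1}\to 0$; in particular $L(g_t)\le 1$ for $R$ large, whence $\abs{Q(g_t)-1}\le CL(g_t)$ and $\abs{\phi_R(t)-e^{-t^2/2}}\le e^{-t^2/2}CL(g_t)$. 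The integrand is then dominated by $e^{-t^2/2}R^{-1/4}A\,e^{b\abs{t}}\bra{\abs{t}A+1}$, whose integral over all of $\R$ is a finite constant times $R^{-1/4}$ because the Gaussian weight overwhelms the factor $e^{b\abs{t}}$. Thus the integral term is $O(R^{-1/4})$ and is negligible against the remainder $C/T=\widetilde C/\ln R$, which produces the claimed rate.

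I expect the heart of the matter to be the tension exposed in the last step: the factor $e^{8\norm{\widehat{g_t}}_{L_1}}=e^{b\abs{t}}$ inside $L$ is \emph{scale invariant} and grows exponentially in $t$, so the correction $Q(g_t)$ can only be kept small for $\abs{t}\lesssim\ln R$. This forces the truncation level in Feller's inequality to be logarithmic, and it is precisely this logarithmic cutoff---not the genuine rate of Gaussian approximation, which is the much faster $R^{-1/4}$ coming from the decay of the homogeneous Sobolev norms---that dictates the final $1/\ln R$ bound. The remaining care lies in fixing the constant $c$ and confirming that the Gaussian factor renders the $t$-integral convergent uniformly in $R$.
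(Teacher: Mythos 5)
Your proposal is correct and follows essentially the same route as the paper: apply Theorem \ref{1:mult_formula} to $ikf(\cdot/R)$, exploit the dilation invariance of the Gaussian exponent and of $\norm{\hat f}_{L_1}$ together with the scaling $\normH{p}{f(\cdot/R)}=R^{1/2-p}\normH{p}{f}$, and feed the resulting bound on $\abs{Q-1}$ into the Feller smoothing inequality with a logarithmic truncation $T\sim c\ln R$, so that the $C/T$ term dictates the $1/\ln R$ rate. If anything, your execution is slightly tighter than the paper's: you retain the factor $e^{-t^2/2}$ in the integrand (making the characteristic-function integral $O(R^{-1/4})$ uniformly) and tie the cutoff constant $c$ explicitly to $\norm{\hat f}_{L_1}$, whereas the paper drops the Gaussian factor and fixes $T=\tfrac18\ln R$, which strictly speaking requires its $f$-dependent constant to be small.
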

    
    \subsection{Exact formula for $Q(f)$ in terms of a Fredholm determinant}
     \begin{notation_remark}
    Here and subsequently for a kernel $K(x, y)$ we denote the respective operator by $K$. Further, for a function $f\in L_\infty(\R)$ let $f$ also stand for the respective operator of pointwise multiplication on $L_2(\R)$. For a subset $A\subset\R$ by $\I_A$ we denote the indicator function of $A$. Let $\I_\pm = \I_{\R_\pm}$.
    \end{notation_remark}
    Introduce the following function
    \[
    \CT(x) = \frac{e^{-ix}}{\sqrt{2\pi}}\psi(x)\rho(x)Z_s(x)e^{i\pi\Re s\sign x}, \quad \psi(x)=e^{-\frac{i\pi}{2}\Re s\sign x}\abs{x}^{-i\Im s}.
    \]
    For a compactly supported bounded Borel $h$ define an integral transform
    \[
    \CT h(\omega) = \int_\R \CT(\omega x)h(x)dx.
    \]
    In \cite[Theorem~1.1]{G_25} it is shown that $\CT$ extends by continuity to a unitary operator (see Section~\ref{sect:diagonalization}, Theorem \ref{3:Diagonalization}). Define
    \[
    \CO{f} = \I_+\CT f\CT^*\I_+, \quad W_f = \I_+\F f\F^*\I_+.
    \]
    
    For any function $f\in \F^*L_1(\R)$ with an absolutely integrable Fourier transform denote $f = f_++f_-$ its decomposition into positive and negative frequencies $f_\pm\in\F^*L_1(\R_\pm)$. Recall that $\F^*L_1(\R)$, $\F^*L_1(\R_\pm)$ are Banach algebras with pointwise multiplication and $\norm{\hat{\cdot}}_{L_1}$-norm.
    \begin{theorem}\label{1:remainder_formula}
    Under requirements of Theorem \ref{1:mult_formula} we have
    \begin{equation}\label{1_eq:remainder_formula}
        Q(f) = \det(\mathbb{I}_{[1, \infty]}W_{e^{f_-}}\CO{e^{-f_+}}\CO{e^{-f_-}}W_{e^{f_+}}\mathbb{I}_{[1, \infty)})\exp\left(\Tr\mathbb{I}_{[1, \infty)}(\CO{f}-W_f)\mathbb{I}_{[1, \infty)}\right).
        \end{equation}
    \end{theorem}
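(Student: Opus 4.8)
The plan is to evaluate the multiplicative functional as a Fredholm determinant through the diagonalization of the kernel, transport this ``$\CT$-picture'' determinant into the Fourier picture where the Gaussian factor is the classical strong Szeg\H{o} constant, and read off $D$ and $E$ from the discrepancy between the two pictures.

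First I would invoke the diagonalization (Theorem \ref{3:Diagonalization}): since $\CT$ is unitary and the kernel is the pullback of the projection onto the positive semi-axis, $K^s=\CT^*\I_+\CT$. The determinantal identity $\E^s e^{S_f}=\det(1+(e^f-1)K^s)$ together with $\overline{S}_f=S_f-\E^s S_f$ and $\E^s S_f=\Tr(fK^s)=\Tr\CO{f}$ then gives, by cyclicity of the Fredholm determinant and $\CT\CT^*=1$,
\[
\E^s e^{\overline{S}_f}=e^{-\Tr\CO{f}}\det(\I_-+\CO{e^f}),
\]
to be read as a regularized ($\det_2$-type) determinant, the regularization being forced by $f\in H_2$ rather than $f\in L_1$. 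I would justify the passage to this formula by approximating $f$ by compactly supported $L_1$ functions, matching the limiting definition of $\overline{S}_f$ given in the introduction, and using Hilbert--Schmidt continuity of $g\mapsto\CO{g}-\I_+$.

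Next I would set up the Fourier comparison. Factor $e^f=e^{f_-}e^{f_+}$ along the splitting $f=f_++f_-$ (legitimate since $H_2\subset\F^*L_1$) and use the triangularity of the Wiener--Hopf operators, $W_{e^{f_-}}=\I_+\F e^{f_-}\F^*$ and $W_{e^{f_+}}=\F e^{f_+}\F^*\I_+$, whose inverses on $L_2(\R_+)$ are $W_{e^{-f_-}}$ and $W_{e^{-f_+}}$. The algebraic engine is multiplicativity, $\F g\F^*\cdot\F h\F^*=\F gh\F^*$ and $\CT g\CT^*\cdot\CT h\CT^*=\CT gh\CT^*$. The central object is
\[
M=W_{e^{f_-}}\CO{e^{-f_+}}\CO{e^{-f_-}}W_{e^{f_+}},
\]
the $\CT$-analogue of $W_{e^{f_-}}W_{e^f}^{-1}W_{e^{f_+}}=W_{e^{f_-}}W_{e^{-f_+}}W_{e^{-f_-}}W_{e^{f_+}}$, obtained by replacing the inverse Toeplitz factor by its $\CT$-version; the point of this Borodin--Okounkov-type arrangement is that the symbols multiply to $1$, so $M$ should differ from $\I_+$ only by a trace-class operator concentrated where the two pictures disagree. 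I would then prove \eqref{1_eq:remainder_formula} by inserting the Wiener--Hopf factors into $\det(\I_-+\CO{e^f})$, splitting off the strong Szeg\H{o} (Akhiezer--Kac) constant of the $\F$-picture, which is exactly the Gaussian exponent $\int_{\R_+}\lambda\hat f(\lambda)\hat f(-\lambda)d\lambda$ and is computed as the trace of a product of two Hankel operators, and collecting the remainder. The $\det_2$-to-$\det$ conversion together with the mismatch between the $\CT$- and $\F$-trace normalizations assembles into $\Tr\I_{[1,\infty)}(\CO{f}-W_f)\I_{[1,\infty)}$, while the leftover off-diagonal (Hankel) contributions assemble into $\det(\I_{[1,\infty)}M\I_{[1,\infty)})$; dividing out the Gaussian factor isolates $Q(f)$.

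The main obstacle is the passage between the two pictures, which is analytic rather than algebraic. I must show that $\CO{g}-W_g$ and $M-\I_{[1,\infty)}$ are trace class, so that $D$ and $E$ are well defined, and this rests on the large-argument asymptotics of ${}_1F_1$: the kernel $\CT(\omega x)$ has to be shown to coincide with the Fourier kernel $e^{-i\omega x}/\sqrt{2\pi}$ up to a remainder decaying fast enough in $\omega x$, which then yields Hilbert--Schmidt control of the relevant Hankel operators and a telescoping of $M$ to $\I_{[1,\infty)}$ modulo trace class. The singularities of $\rho(x)=\abs{x}^{\Re s}$ and of $Z_s$ at the origin are what force the cutoff at $1$ instead of $0$; I would check that the contribution of $[0,1)$ enters $D$ and $E$ in the same way and therefore cancels, so that the identity is independent of the cutoff location. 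Securing these trace-class bounds with enough uniformity is the delicate point, and it is the same estimate that later drives the quantitative control of $Q(f)$ in Theorem \ref{1:mult_formula}.
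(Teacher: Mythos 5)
Your argument breaks at the very first step, and the failure is structural rather than technical. Theorem \ref{3:Diagonalization} does not say that $K^s$ is the pullback of the projection onto the positive semi-axis: it says $\CT^*\I_{[0,1]}\CT = \phi K^s\phi^*$, i.e.\ $K^s$ is (up to conjugation by the unimodular function $\phi$) the $\CT$-pullback of the projection onto the \emph{finite interval} $[0,1]$. The pullback of $\I_+$ is an entirely different operator: by Theorem \ref{3:PW_th} it is a conjugate of the Paley--Wiener projection $\F^*\I_+\F$, which is not locally trace class and is not the correlation kernel of $\P^s$. Consequently your starting identity
\[
\E^s e^{\overline{S}_f}=e^{-\Tr\CO{f}}\det(\I_-+\CO{e^f})
\]
is false, and it cannot be repaired by a $\det{}_2$-regularization: $\CO{e^f}-\I_+=\CO{e^f-1}$ differs from the Wiener--Hopf operator $W_{e^f-1}$ by a correction that is trace class for nice $f$ (Lemma \ref{2:diff_tr_est}), while the convolution kernel $\widehat{(e^f-1)}(x-y)$ of $W_{e^f-1}$ is never square-integrable over $\R_+\times\R_+$ unless $e^f\equiv 1$; hence $\CO{e^f-1}$ is not even Hilbert--Schmidt, and likewise $\Tr\CO{f}$ diverges because the diagonal of $W_f$ is the constant $\hat f(0)$. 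So the ``Hilbert--Schmidt continuity of $g\mapsto \CO{g}-\I_+$'' you plan to use does not exist. The correct statement, obtained by exactly the cyclicity argument you invoke but with the correct projection, is Proposition \ref{2:mult_formula}: $\E^s e^{\overline{S}_f}=e^{-\Tr\I_{[0,1]}\CO{f}\I_{[0,1]}}\det(\I_{[0,1]}\CO{e^f}\I_{[0,1]})$, a determinant compressed to $L_2([0,1])$, which \emph{is} trace class.

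This misidentification also removes the mechanism that produces $\I_{[1,\infty)}$ in the target formula, and your substitute explanation of the cutoff is wrong. In the paper, $[1,\infty)$ is the complement of the spectral interval $[0,1]$: after inserting Wiener--Hopf factors, $\det(\I_{[0,1]}\CO{e^f}\I_{[0,1]}) = e^{\hat f(0)}\det(\I_{[0,1]}e^{-W_{f_+}}\CO{e^f}e^{-W_{f_-}}\I_{[0,1]})$, one applies the Jacobi--Dodgson identity (Theorem \ref{B:JD}), $\det(P(I+K)P)=\det(I+K)\det(Q(I+K)^{-1}Q)$, with $P=\I_{[0,1]}$ and $Q=\I_{[1,\infty)}$; the full-space determinant is evaluated by Ehrhardt's Helton--Howe formula together with the factorization of Theorem \ref{4:WH_properties} (giving the Gaussian constant and $\Tr\mathcal{R}_f$, Lemma \ref{4:widom_formula}), while $(e^{-W_{f_+}}\CO{e^f}e^{-W_{f_-}})^{-1}=W_{e^{f_-}}\CO{e^{-f_+}}\CO{e^{-f_-}}W_{e^{f_+}}$ is precisely your operator $M$, compressed to the complement $Q$. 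Your proposal has no analogue of this step --- ``the leftover Hankel contributions assemble into $\det(\I_{[1,\infty)}M\I_{[1,\infty)})$'' is a hope, not an argument --- and your claim that the cutoff at $1$ merely avoids the singularities of $\rho$ and $Z_s$ at the origin, so that ``the identity is independent of the cutoff location,'' is false: replacing $\I_{[1,\infty)}$ by $\I_{[a,\infty)}$ changes the right-hand side (it would then correspond to $\det(\I_{[0,a]}\CO{e^f}\I_{[0,a]})$, the multiplicative functional of a dilated kernel) while $Q(f)$ is unchanged. The point $1$ is the endpoint of the interval diagonalizing $K^s$, not a UV regularization. Your remaining ingredients --- the Borodin--Okounkov-type arrangement of $M$, the Hankel computation of the Szeg\H{o} constant, trace-class control from the ${}_1F_1$ asymptotics --- do correspond to Lemmata \ref{4:widom_formula}, \ref{2:diff_tr_est} and Section \ref{sect:tr_class} of the paper, but they cannot be assembled into a proof without the compressed determinant and the passage to the complementary projection.
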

    \subsection{Related work}
    
    \textbf{The case $s=0$}
    
    Recall that for $s=0$ the process reduces to the sine-process --- a determinantal point process induced by the kernel
    \[
    K_{\mathcal{S}}(x, y) = \frac{\sin (\pi(x-y))}{\pi(x-y)}.
    \]
    For this process the convergence in distribution of additive functionals has been shown by Soshnikov~\cite{S_00CLT}. His method does not involve direct work with the Fredholm determinants, though it is possible to calculate their asymptotics, which was done by Katz and Akhiezer \cite[Section 10.13]{BS_06}. Proposition \ref{2:mult_formula} for $s=0$ yields that
    \[
    \E^0e^{\overline{S}_f}=e^{-\hat{f}(0)}\det(\I_{[0, 1]}W_{e^f}\I_{[0, 1]}).
    \]
    Recall that Katz and Akhiezer showed that under certain conditions on $f$ we have
    \[
    \det(\I_{[0, 1]}W_{e^{f(\cdot/R)}}\I_{[0, 1]}) \sim \exp\left(R\hat{f}(0)+\int_0^\infty \lambda\hat{f}(\lambda)\hat{f}(-\lambda)d\lambda\right), \quad\text{ as }R\to\infty.
    \]
    This yields the result of Soshnikov. An exact formula for these determinants is due to Basor and Chen \cite{BC_03}. We note that Theorem \ref{1:remainder_formula} coincides with their result if $s=0$. The approach of Basor and Chen is similar to ours and is based on the factorization of Wiener-Hopf operators and the Widom's formula \cite{W_82}. See Theorem \ref{4:WH_properties} for the generalization of these statements to an arbitrary $s$. Their formula was then used by Bufetov \cite{B_25} to derive an estimate for the Kolmogorov-Smirnov distance. We note that, unlike Corollary \ref{1:KS_conv}, the latter result includes the estimate by a constant times $1/R$ for holomorphic in a strip functions. Another proof for the exact identity under weaker assumptions is due to Bufetov~\cite{B_24}.
    
    \textbf{Limit theorems for different determinantal point processes}
    
    A result, similar to Theorem \ref{1:remainder_formula}, has been derived by the author for the Bessel kernel determinantal point process \cite{G_24} (see \cite{TW_94} for the details about the process). The proof of the latter is in many ways similar to the one presented here. Both are inspired by a series of works of Basor, Widom, Ehrhardt, Chen and B\"oettcher~\cite{B_97, BC_03, BE_03, BE_03B, BEW_03, BW_00, B_02, E_03SZ}. In particular, the limit theorem for the Bessel kernel determinantal point process was first proved by Basor \cite{B_97} and then by Basor and Ehrhardt \cite{BE_03} under less restrictive assumptions. The latter proof is based on the algebraic method of Ehrhardt \cite{E_03SZ}. Special case of this result includes determinants of sums of Wiener-Hopf and Hankel operators, for which a precise formula has been obtained by Basor, Widom and Ehrhardt \cite{BEW_03}. We also note that the limit theorem for the determinantal point process with the Airy kernel \cite{TW_93} has been obtained by Basor and Widom \cite{BW_99}.
    
    \textbf{Discrete counterparts and scaling limits}
    
    The problem of calculating determinants of Wiener-Hopf determinants has a discrete analogue, which is the problem of calculating Toeplitz determinants. The latter express expectations of multiplicative functionals under the radial part of the Haar measure on the unitary group, which follows from the Weyl integration formula and the Szeg\"o-Heine formula \cite[Theorem 1.5.13]{S_05OPUC}. A direct connection between Toeplitz determinants and Wiener-Hopf determinants is expressed by the convergence of the circular unitary ensemble to the sine process. It was used by Bufetov in~\cite{B_24} to derive the exact formula for Wiener-Hopf determinants from the Borodin-Okounkov formula~\cite{BO_00} for the Toeplitz determinants. Originally obtained by Borodin and Okounkov from the Gessel Theorem and the determinantal structure of the Schur measures, the Borodin-Okounkov formula was derived more directly using operator-theoretic methods \cite{BW_00, B_02}. Calculation of determinants of sums of Toeplitz and Hankel matrices may be approached both by operator-theoretic methods \cite{BE_03} and by representation-theoretic methods \cite{B_18}. The limit theorem for the Toeplitz determinants is the statement of the Szeg\"o Theorem \cite[Section 10.4]{BS_06}.
    
    Determinants, connected with multiplicative functionals under $\P^s$, have discrete analogue for an arbitrary parameter $s$. By a result of Bourgade, Nikeghbali and Rouault the measure $\P^s$ is a scaling limit of the circular Pseudo-Jacobian ensemble \cite{BNR_06}. Multiplicative functionals of the latter may be expressed as relations of singular Toeplitz determinants. It is, therefore, interesting if Theorem \ref{1:remainder_formula} has a discrete analogue. We note that an exact formula for these relations was derived by Ehrhardt in \cite{E_97}. However, we were not able to derive its scaling limit as was done in \cite{B_24}. A more general situation of the Toeplitz determinants with a number of Fisher-Hartwig singularities was considered by Deift, Its and Krasovsky \cite{DIK_11} using the Deift-Zhou steepest descent method for the Riemann-Hilbert problem \cite{DZ_93}, but only an asymptotic was derived. It is also interesting if the connection of multiplicative functionals under $\P^s$ with singular Wiener-Hopf operators exists as in the discrete case. Asymptotics of the latter were considered by Basor and Ehrhardt for one singularity \cite{BE_04} and by Kozlowski in a more general case \cite{K_22}.
    
    \textbf{Constructions of the confluent hypergeometric point processes}
    
    Last, let us mention several constructions of the determinantal point process with the confluent hypergeometric kernel. It was first shown by Borodin and Olshanski \cite{BO_01} that $\P^s$ describes decomposition of the Hua-Pickrell measures on semi-infinite Hermitian matrices into ergodic components with respect to the action of the infinite unitary group by conjugation. They also showed that $\P^s$ is the scaling limit of the Pseudo-Jacobian orthogonal-polynomial ensemble on $\R$. A similar construction involving orthogonal polynomials on the unit circle was described by Bourgade, Nickeghbali and Rouault \cite{BNR_06}. The confluent hypergeometric kernel point process may also be constructed by Palm measures, which was shown by Bufetov \cite{B_23}. In particular, the Palm measure of $\P^s$ in zero is $\P^{s+1}$. That is, for natural $s$ the point process is $s$-th Palm measure of the sine process in zero. Last, the measure $\P^s$ appears as a degeneration of the ${}_2F_1$-process, which was shown by Borodin and Deift \cite{BD_01}.
    
    \section{Acknowledgements}
The authors are winners of the BASIS Foundation Competition and are deeply grateful to the Jury
and the sponsors.
    
    \section{Outline of proof}\label{sect:outline}
    We derive Theorems \ref{1:mult_formula}, \ref{1:remainder_formula} as statements for the expression
    \[
    \det(\I_{[0, 1]}\CO{e^f}\I_{[0, 1]}).
    \]
    The connection with the measure $\P^s$ is expressed via the following proposition.
    \begin{proposition}\label{2:mult_formula}
    For any $f\in L_1(\R)\cap L_\infty(\R)$ we have
    \[
    \E^se^{\overline{S}_f} = e^{-\Tr \I_{[0, 1]}G_f\I_{[0, 1]}}\det(\I_{[0, 1]}G_{e^f}\I_{[0, 1]}).
    \]
\end{proposition}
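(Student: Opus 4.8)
The plan is to derive the identity from the general formula for the expectation of a multiplicative functional of a determinantal point process, and then to transport the resulting Fredholm determinant through the diagonalizing transform $\CT$. Since $f\in L_1(\R)\cap L_\infty(\R)$, the one-point expectation $\E^s S_f=\int_\R K^s(x,x)f(x)\,dx$ is finite and $\overline{S}_f=S_f-\E^s S_f$, so that $\E^s e^{\overline{S}_f}=e^{-\E^s S_f}\,\E^s e^{S_f}$. Writing $e^{S_f(X)}=\prod_{x\in X}e^{f(x)}=\prod_{x\in X}\bigl(1+g(x)\bigr)$ with $g=e^f-1\in L_1(\R)\cap L_\infty(\R)$ (note $1+g=e^f>0$), I would invoke the standard multiplicative-functional identity for a locally trace class projection kernel, $\E^s\prod_{x\in X}(1+g(x))=\det(\I+gK^s)$, to obtain
\[
\E^s e^{S_f}=\det\bigl(\I+(e^f-1)K^s\bigr).
\]

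Next I would use the diagonalization of $K^s$ furnished by Theorem \ref{3:Diagonalization}, which exhibits $\CT$ as the unitary operator with $K^s=\CT^*\I_{[0,1]}\CT$. Splitting $K^s=\CT^*\I_{[0,1]}\cdot\I_{[0,1]}\CT$ and applying invariance of the Fredholm determinant under cyclic permutation, $\det(\I+AB)=\det(\I+BA)$, gives
\[
\det\bigl(\I+(e^f-1)K^s\bigr)=\det\bigl(\I+\I_{[0,1]}\CT(e^f-1)\CT^*\I_{[0,1]}\bigr).
\]
Since $\CT\CT^*=\I$, the operator inside equals $(\I-\I_{[0,1]})+\I_{[0,1]}\CT e^f\CT^*\I_{[0,1]}$, which is the identity on the orthogonal complement of $L_2[0,1]$; hence the determinant reduces to that of $\I_{[0,1]}\CT e^f\CT^*\I_{[0,1]}$ on $L_2[0,1]$. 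Because $[0,1]\subset\R_+$ we have $\I_{[0,1]}\I_+=\I_{[0,1]}$, so $\I_{[0,1]}\CT e^f\CT^*\I_{[0,1]}=\I_{[0,1]}\CO{e^f}\I_{[0,1]}$ and therefore $\E^s e^{S_f}=\det(\I_{[0,1]}\CO{e^f}\I_{[0,1]})$.

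It remains to identify the normalizing factor. The same cyclic manipulation applied to the one-point function yields
\[
\E^s S_f=\int_\R K^s(x,x)f(x)\,dx=\Tr\bigl(fK^s\bigr)=\Tr\bigl(\I_{[0,1]}\CT f\CT^*\I_{[0,1]}\bigr)=\Tr\bigl(\I_{[0,1]}\CO{f}\I_{[0,1]}\bigr),
\]
again using $\I_{[0,1]}\I_+=\I_{[0,1]}$. Substituting the two displays into $\E^s e^{\overline{S}_f}=e^{-\E^s S_f}\,\E^s e^{S_f}$ produces the claimed formula.

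The main obstacle is analytic rather than algebraic: every Fredholm determinant and every trace above must be shown to be well defined, that is, the relevant operators must be trace class before cyclicity and the reduction to $L_2[0,1]$ may legitimately be invoked. Concretely I would verify that $(e^f-1)K^s$ and, equivalently, $\I_{[0,1]}\CT(e^f-1)\CT^*\I_{[0,1]}$ are trace class for $e^f-1\in L_1(\R)\cap L_\infty(\R)$; this uses the local trace class property of the projection $K^s$ together with the mapping properties of the unitary transform $\CT$ from Theorem \ref{3:Diagonalization}, and it is precisely what licenses the multiplicative-functional identity in this generality. Once trace-class membership is secured, the algebraic identities follow as above.
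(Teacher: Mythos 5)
Your algebraic outline (diagonalize by $\CT$, move the determinant to $L_2[0,1]$ by cyclicity, identify the trace term) is the same as the paper's, but the analytic step on which everything rests is a genuine gap, and it cannot be repaired in the way you propose. The identity $\E^s\Psi_{1+g}=\det(I+gK^s\I_B)$ is available, by the very definition of a determinantal process, only for \emph{compactly supported} $g$; for $g=e^f-1\in L_1(\R)\cap L_\infty(\R)$ without compact support the operator $gK^s$ is in general \emph{not} trace class, so the Fredholm determinant $\det(I+(e^f-1)K^s)$ in your first display is not defined. Indeed, for $g\ge 0$ the singular values of $gK^s$ are the square roots of the eigenvalues of $K^sg^2K^s$, which by Theorem \ref{3:Diagonalization} is unitarily equivalent to the compression $\I_{[0,1]}\CO{g^2}\I_{[0,1]}$ --- a positive trace class operator with continuous kernel, whose eigenvalues are summable but whose square roots need not be summable when $g$ is merely integrable and bounded (summability of $\sqrt{\lambda_n}$ is tied to decay of $g$, not just to $g\in L_1$). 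What is true is that the symmetrization $\sqrt{g}K^s\sqrt{g}$ and the compression $\I_{[0,1]}\CT g\CT^*\I_{[0,1]}$ are trace class; your parenthetical claim that trace-classness of $(e^f-1)K^s$ is ``equivalent'' to that of $\I_{[0,1]}\CT(e^f-1)\CT^*\I_{[0,1]}$ is false, because the latter is a one-sided compression of a unitary conjugate of the former, and $PTP\in\mathcal{J}_1$ does not imply $TP\in\mathcal{J}_1$. The same objection hits your last step: $\Tr(fK^s)$ is not defined for general $f\in L_1\cap L_\infty$, so $\E^sS_f=\Tr(\I_{[0,1]}\CO{f}\I_{[0,1]})$ cannot be obtained by cyclicity of the trace.

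The paper circumvents exactly this obstruction by working with the regularized determinant: Proposition \ref{3:mult_cont} gives $\E^s\Psi_{1+g}=\det{}_2(I+gK^s)\exp\bigl(\int_\R g(x)K^s(x,x)dx\bigr)$, which requires only $gK^s\in\mathcal{J}_2$, and this does hold here since $\norm{gK^s}_{\mathcal{J}_2}^2=\int_\R\abs{g(x)}^2K^s(x,x)dx<\infty$. Cyclicity at the $\det{}_2$ level (valid for Hilbert--Schmidt products) transfers the determinant to $\det{}_2(I+\I_{[0,1]}\CO{g}\I_{[0,1]})$; then Mercer's theorem (Theorem \ref{B:merc_th}, after splitting $g$ into positive continuous pieces) shows that the compressed operator is trace class and that its trace equals the diagonal integral $\int_0^1 \CO{g}(t,t)dt=\int_\R g(x)K^s(x,x)dx$. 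Only at that point can $\det{}_2$ be converted back into an honest determinant, producing $\det(\I_{[0,1]}\CO{e^f}\I_{[0,1]})$ together with the prefactor $e^{-\Tr\I_{[0,1]}\CO{f}\I_{[0,1]}}$. If you want to keep your outline, you must either symmetrize (prove the identity with $\det(I+\sqrt{\abs{g}}\,uK^s\sqrt{\abs{g}})$, where $g=u\abs{g}$, and then still transfer it to the compression), or prove the continuity of $g\mapsto\E^s\Psi_{1+g}$ in $L_1(\R,d\mu_{K^s})\cap L_\infty$ that lets you pass to the limit from compactly supported $g$ --- and that continuity statement is precisely the content of the paper's Proposition \ref{3:mult_cont}.
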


The proof of Theorem \ref{1:remainder_formula} is based on the Wiener-Hopf factorization for the operator $G_f$. We recall the factorization in Section \ref{sect:diagonalization} (see Theorem \ref{4:WH_properties}). Using the latter we are able to derive Theorem \ref{1:remainder_formula} first for a small class of functions.
\begin{lemma}\label{2:form_first}
We have that Theorem \ref{1:remainder_formula} holds for $f\in H_{1/2}(\R)\cap \F^*L_1(\R)\cap L_1(\R)$ such that $\mathcal{R}_f = G_f-W_f\in\mathcal{J}_1$.
\end{lemma}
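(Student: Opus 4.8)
The plan is to deduce Theorem \ref{1:remainder_formula} from Proposition \ref{2:mult_formula} together with the Wiener-Hopf calculus of Theorem \ref{4:WH_properties}. Comparing Proposition \ref{2:mult_formula} with the defining relation \eqref{1_eq:mult_formula} of $Q(f)$ gives
\[
Q(f)=\exp\Bigl(-\Tr\I_{[0,1]}G_f\I_{[0,1]}-\int_{\R_+}\lambda\hat f(\lambda)\hat f(-\lambda)\,d\lambda\Bigr)\det(\I_{[0,1]}G_{e^f}\I_{[0,1]}),
\]
so the whole problem reduces to an exact (continuous Borodin-Okounkov type) evaluation of $\det(\I_{[0,1]}G_{e^f}\I_{[0,1]})$, in which the two explicit exponential factors above must reappear and cancel, leaving the tail determinant on $[1,\infty)$ and the trace correction.

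First I would invoke the Wiener-Hopf factorization of Theorem \ref{4:WH_properties}. Since $f\in\F^*L_1(\R)$, the symbols $e^{\pm f_\pm}$ lie in the Banach algebras $\F^*L_1(\R_\pm)$, and writing $f=f_++f_-$ one has the factorization $G_{e^f}=G_{e^{f_-}}G_{e^{f_+}}$, with the factors invertible and $G_{e^{f_\pm}}^{-1}=G_{e^{-f_\pm}}$. Consequently $G_{e^f}^{-1}=G_{e^{-f_+}}G_{e^{-f_-}}$, which is already the middle factor appearing in \eqref{1_eq:remainder_formula}. The analytic factor $G_{e^{f_+}}$ and the co-analytic factor $G_{e^{f_-}}$ (and likewise their Fourier counterparts $W_{e^{f_+}}$, $W_{e^{f_-}}$) are triangular with respect to the splitting $L_2(\R_+)=L_2[0,1]\oplus L_2[1,\infty)$, in the orientations that make the compressions below behave well.

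The engine of the evaluation is the Jacobi (Schur-complement) identity
\[
\det(\I_{[0,1]}M\I_{[0,1]}+\I_{[1,\infty)})=\det(M)\det(\I_{[1,\infty)}M^{-1}\I_{[1,\infty)}+\I_{[0,1]}),
\]
with $M=G_{e^f}$, which transfers the determinant from the growing bulk $[0,1]$ to the tail $[1,\infty)$, where the relevant operator is a trace-class perturbation of the identity. Because $\det G_{e^f}$ is not literally defined on $L_2(\R_+)$, this step is carried out at the level of regularized determinants: the factorization rewrites the bulk contribution as a product of compressions of $G_{e^{f_\pm}}$, whose determinants are supplied by the Widom formula (Theorem \ref{4:WH_properties}) and evaluate exactly to $\exp(\Tr\I_{[0,1]}G_f\I_{[0,1]})$ (the geometric-mean term) and $\exp(\int_{\R_+}\lambda\hat f(\lambda)\hat f(-\lambda)\,d\lambda)$ (the second-order term). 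These are precisely the two prefactors above, so they cancel. To bring the resulting tail determinant of $G_{e^f}^{-1}$ into the stated form I would dress it by the triangular Fourier factors $W_{e^{f_-}}$ on the left and $W_{e^{f_+}}$ on the right; triangularity ensures this does not alter the compression to $[1,\infty)$, while the first-order discrepancy between the $G$- and $W$-factorizations, governed by $\mathcal R_f=G_f-W_f\in\mathcal J_1$, renders the perturbation trace class and produces exactly the factor $\exp(\Tr\I_{[1,\infty)}(G_f-W_f)\I_{[1,\infty)})$.

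I expect the main difficulty to lie not in the algebra but in the analytic bookkeeping that makes every object above legitimate. One must check that each operator entering a Fredholm determinant differs from the identity by a trace-class operator, that every trace is finite, and that the Widom integral converges --- which is where the three hypotheses enter: $\F^*L_1(\R)$ keeps all symbols in the Wiener algebra so that the factorization and triangularity are exact, $H_{1/2}(\R)$ guarantees $\int_{\R_+}\lambda\hat f(\lambda)\hat f(-\lambda)\,d\lambda<\infty$, and $\mathcal R_f\in\mathcal J_1$ makes both the trace correction and the tail determinant well defined. The most delicate points are the rigorous justification of the Jacobi identity in infinite dimensions (requiring genuine invertibility of $G_{e^f}$ and the existence of the regularized determinant) and the exact matching of the Widom main terms with the explicit prefactors, including the $s$-dependent corrections that distinguish $G$ from the Fourier-Wiener-Hopf operator $W$.
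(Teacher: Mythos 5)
Your overall strategy (Proposition \ref{2:mult_formula} plus Wiener--Hopf factorization plus a Jacobi-type transfer from $[0,1]$ to $[1,\infty)$) is the same as the paper's, but two of your key claims are false for $s\neq 0$, and they are exactly the points where the content of the proof lies. First, you assert that $G_{e^{f_+}}$ and $G_{e^{f_-}}$ are triangular with respect to the splitting $L_2(\R_+)=L_2[0,1]\oplus L_2[1,\infty)$. Theorem \ref{4:WH_properties} grants this triangularity only in the case $s=0$, i.e.\ only for the Fourier operators $W_{e^{f_\pm}}$; it comes from the convolution (translation-invariant) structure of $\F f\F^*$, which $\CT f\CT^*$ does not have. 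Your evaluation of the bulk contribution as $\exp\bigl(\Tr\I_{[0,1]}G_f\I_{[0,1]}\bigr)$ rests on this false triangularity. Indeed, if the $G$-factors were triangular, the final formula would be the verbatim $s=0$ Basor--Chen identity with $G$ in place of $W$ and no trace correction; the correction $\exp\bigl(\Tr\I_{[1,\infty)}\mathcal{R}_f\I_{[1,\infty)}\bigr)$ in Theorem \ref{1:remainder_formula} is precisely a manifestation of the failure of that triangularity.

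Second, your application of the Jacobi identity to $M=G_{e^f}$ is not justified (as you note, $\det G_{e^f}$ is undefined since $G_{e^f}-I\notin\mathcal{J}_1$), and your fix --- dress the tail determinant afterwards by $W_{e^{f_-}}$, $W_{e^{f_+}}$, claiming triangularity means this ``does not alter the compression'' --- is wrong: triangularity gives $\I_{[1,\infty)}W_{e^{f_-}}XW_{e^{f_+}}\I_{[1,\infty)} = (\I_{[1,\infty)}W_{e^{f_-}}\I_{[1,\infty)})(\I_{[1,\infty)}X\I_{[1,\infty)})(\I_{[1,\infty)}W_{e^{f_+}}\I_{[1,\infty)})$, which certainly changes the operator and its determinant (and the determinant does not even split, since $\I_{[1,\infty)}W_{e^{f_\pm}-1}\I_{[1,\infty)}$ is not trace class). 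The paper's order of operations resolves both problems: it first dresses the \emph{bulk} determinant, using the genuine triangularity of the $W$-factors to get $\det(\I_{[0,1]}G_{e^f}\I_{[0,1]})=e^{\hat f(0)}\det(\I_{[0,1]}e^{-W_{f_+}}G_{e^f}e^{-W_{f_-}}\I_{[0,1]})$; it then proves (Lemma \ref{4:widom_formula}) via Ehrhardt's generalization of the Helton--Howe formula (Theorem \ref{B:helton_howe}) --- a tool your proposal never invokes, and which cannot be replaced by the commutator trace formula of Theorem \ref{4:WH_properties} alone --- that $e^{-W_{f_+}}G_{e^f}e^{-W_{f_-}}=I+K$ with $K\in\mathcal{J}_1$ and $\det(I+K)=\exp\bigl(\Tr\mathcal{R}_f+\int_0^\infty\omega\hat f(\omega)\hat f(-\omega)d\omega\bigr)$; only then is Jacobi--Dodgeson applied, legitimately, to $I+K$, whose inverse is $W_{e^{f_-}}G_{e^{-f_+}}G_{e^{-f_-}}W_{e^{f_+}}$. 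The dressing factor $e^{\hat f(0)}=e^{\Tr\I_{[0,1]}W_f\I_{[0,1]}}$ (not $e^{\Tr\I_{[0,1]}G_f\I_{[0,1]}}$) then combines with $e^{\Tr\mathcal{R}_f}$ and the prefactor from Proposition \ref{2:mult_formula} to yield $e^{\Tr\I_{[1,\infty)}\mathcal{R}_f\I_{[1,\infty)}}$; your bookkeeping lands on the right answer only because the two erroneous claims compensate each other.
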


Next we establish that the class of functions satisfying the restriction is sufficiently large to further extend the formula by continuity. In Section \ref{sect:tr_class} we prove the following lemma.
\begin{lemma}\label{2:diff_tr_est}
\begin{itemize}
\item We have that $\mathcal{R}_f\in\mathcal{J}_1$ for $f\in H_2(\R)$ such that $x^2f(x)\in L_2(\R)$.
\item The following estimate holds for some independent of $f$ constant
\[
\norm{\I_{[1, +\infty)}\mathcal{R}_f\I_{[1, +\infty)}}_{\mathcal{J}_1} \le C(\normH{3/4}{f}+\normH{2}{f}).
\]
\end{itemize}
\end{lemma}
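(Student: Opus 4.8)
The plan is to realize $\mathcal{R}_f=\CO{f}-W_f$ as an explicit integral operator on $L_2(\R_+)$ and to reduce the whole statement to the size of a single scalar factor. Writing $\CT(y)=\frac{e^{-iy}}{\sqrt{2\pi}}g(y)$ with $g(y)=\psi(y)\rho(y)Z_s(y)e^{i\pi\Re s\,\sign y}$, the kernels of $\CO{f}=\I_+\CT f\CT^*\I_+$ and $W_f=\I_+\F f\F^*\I_+$ are $\int_\R\CT(\omega x)\overline{\CT(\omega' x)}f(x)\,dx$ and $\frac{1}{2\pi}\int_\R e^{-i(\omega-\omega')x}f(x)\,dx$, so that for $\omega,\omega'>0$
\[
\mathcal{R}_f(\omega,\omega')=\frac{1}{2\pi}\int_\R e^{-i(\omega-\omega')x}\bra{g(\omega x)\overline{g(\omega' x)}-1}f(x)\,dx.
\]
For $s=0$ one has $g\equiv1$ and $\mathcal{R}_f=0$, the sine-process sanity check; in general everything hinges on the factor $g(\omega x)\overline{g(\omega' x)}-1$.

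The first real step is to read off the behaviour of $g$ from the connection formulas for ${}_1F_1$. Away from the origin $g$ is smooth, as $y\to\pm\infty$ one gets $g(y)\to c_\pm$ with $\abs{c_\pm}=1$ and a remainder $r(y):=g(y)-c_{\sign y}=O(1/\abs{y})$ carrying an $e^{iy}$-type oscillation (with matching derivative bounds), and near the origin $g(y)=O(\abs{y}^{\Re s})$, so that $\Re s>-1/2$ guarantees $r\in L_2(\R_+)\cap L_2(\R_-)$. The algebraic key is $c_+\overline{c_+}=c_-\overline{c_-}=1$: for $\omega,\omega'>0$ the constant parts cancel exactly and
\[
g(\omega x)\overline{g(\omega' x)}-1=c(\omega x)\overline{r(\omega' x)}+r(\omega x)\overline{c(\omega' x)}+r(\omega x)\overline{r(\omega' x)},\qquad c(y):=c_{\sign y},
\]
splitting $\mathcal{R}_f$ into a diagonal-type term $\mathcal{R}^{(3)}$ and two cross terms $\mathcal{R}^{(1)},\mathcal{R}^{(2)}$.

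The term $\mathcal{R}^{(3)}$ built from $r\overline{r}$ is the most benign away from the origin: factoring $f=m_1\overline{m_2}$ with $\abs{m_1}=\abs{m_2}=\abs{f}^{1/2}$ and writing $\mathcal{R}^{(3)}=AB^*$ with one copy of $r$ in each factor, the Hilbert–Schmidt norms reduce after the substitution $u=\omega x$ to $\norm{r}_{L_2}^2\int_\R\abs{f(x)}\abs{x}^{-1}\,dx$, which is finite for large $x$ once $x^2f\in L_2$. The genuine difficulty is twofold. First, near $x=0$ (equivalently small frequencies) the naive factorizations of all three pieces diverge logarithmically, since $\int_0\abs{f(x)}\abs{x}^{-1}\,dx$ is infinite when $f(0)\neq0$; this forces one to exploit an exact cancellation. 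The relevant identity is that the regularized integral $\int_\R\bra{\abs{g(u)}^2-1}du$ vanishes — a consequence of the unitarity of $\CT$ — together with its finer operator versions; only after incorporating this does the small-$x$ contribution become finite, which is what the first bullet needs. Second, the cross terms $\mathcal{R}^{(1)},\mathcal{R}^{(2)}$ contain the non-decaying factor $c$, so their large-$x$ behaviour resists symmetric Hilbert–Schmidt factorization; there I would recognize them, after $u=\omega x$, as Hankel-type operators (kernels depending on $\omega+\omega'$) and integrate by parts in $x$, spending derivatives of $f$ to gain decay in the transform variable.

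For the quantitative estimate the cut-off $\I_{[1,\infty)}$ changes which hypothesis does the work in the large-$x$ region: with $\omega,\omega'\ge1$ the oscillatory factors $e^{-i(\omega-\omega')x}$ and $e^{-i(\omega+\omega')x}$ supply decay in $\omega\mp\omega'$ through the smoothness of $f$, so pointwise decay of $f$ is no longer needed and is replaced by Sobolev regularity. Bounding the cut-off cross terms by products of two Hilbert–Schmidt factors and computing the norms on the Fourier side, the Hankel weight $\omega+\omega'\ge2$ together with the $O(1/u)$ tail of $r$ turns $\norm{\cdot}_{\mathcal{J}_2}^2$ into weighted $L_2$ integrals of $\hat{f}$, producing $\normH{3/4}{f}$ from the leading $1/u$ tail and $\normH{2}{f}$ from the subleading, integration-by-parts contribution, whence $C(\normH{3/4}{f}+\normH{2}{f})$. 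The hardest part throughout is this small-frequency cancellation: one must establish the vanishing of the regularized integrals with enough uniformity that no logarithmic divergence survives, and then match the Hilbert–Schmidt estimates for the Hankel pieces to the precise Sobolev exponents $3/4$ and $2$.
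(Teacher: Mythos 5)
Your skeleton does match the paper's proof in its main lines: for $\omega,\omega'\ge 1$ the unimodular limiting phases of $g$ cancel exactly against the $-1$, leaving two cross terms and one quadratic term in the decaying part $r$ (the paper's $\mathfrak{D}$ is your $r$ up to a constant phase, and its kernels $S_f$, $T_f$, $Z_f$ are precisely your three pieces), which are then bounded in trace norm by Hilbert--Schmidt--type factorizations after integrating by parts in $x$ to trade derivatives of $f$ for decay in the operator variable. However, the step you yourself flag as the hardest --- removing the logarithmic divergence at $x=0$ when $f(0)\ne 0$ --- is a genuine gap, and the mechanism you propose cannot close it. The vanishing of the regularized integral $\int_\R(\abs{g(u)}^2-1)\,du$ is useless at the point where you need it: in your factorization bounds the absolute values sit \emph{inside} all integrals (e.g.\ $\norm{AB^*}_{\mathcal{J}_1}\le\norm{A}_{\mathcal{J}_2}\norm{B}_{\mathcal{J}_2}$ with $\norm{A}_{\mathcal{J}_2}^2$ an integral of $\abs{A}^2$), so no sign cancellation in an integral identity can reduce them; the operator must be rewritten \emph{before} any estimate is taken. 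The paper's fix is exact and elementary: unitarity of $\CT$ and $\F$ gives $G_c=W_c=c\,\I_+$ for constants, hence $\mathcal{R}_f=\mathcal{R}_{f_0}$ with $f_0(x)=f(x)-f(0)$, and the Sobolev--H\"older bound $\abs{f_0(x)}\lesssim\normH{p}{f}\abs{x}^{p-1/2}$ (valid for $p\in(1/2,3/2)$, Lemma \ref{C:holder_cond}) then makes every integral converge at the origin and produces exactly the exponents $3/4$ and $2$. This substitution is the ``finer operator version'' you allude to, but without writing it down none of your three estimates is finite.

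There is a second gap in your treatment of the first bullet: even after the repair above, a single global factorization over $\omega\in(0,\infty)$ cannot work, because the cancellation only permits shifting $f$ by one constant, and no shift makes the weight both vanish at $x=0$ and decay at $x=\infty$ when $f(0)\ne 0$ (your bound becomes $\norm{r}_{L_2}^2\int\abs{f_0(x)}\abs{x}^{-1}dx$, and $f_0\to -f(0)$ at infinity). The cutoff $\I_{[1,+\infty)}$ is what rescues the second bullet, since $\int_1^\infty\abs{r(\omega x)}^2d\omega\lesssim\abs{x}^{-1}\min(1,\abs{x}^{-1})$ supplies extra decay in $x$; for the first bullet the paper instead splits the frequency axis and treats $\omega\in[0,1]$ by a completely different mechanism, proving directly that $\I_{[0,1]}\CT f\in\mathcal{J}_1$ via a factorization through the Volterra operator and a rank-one projector (Lemma \ref{5:01_est}), which is where the hypothesis $x^2f(x)\in L_2(\R)$ is actually spent, with no cancellation at all; the claim then follows from the three-block decomposition $\mathcal{R}_f=\I_{[1,+\infty)}\mathcal{R}_f\I_{[1,+\infty)}+\I_{[1,+\infty)}\mathcal{R}_f\I_{[0,1]}+\I_{[0,1]}\mathcal{R}_f$. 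Your proposal has no analogue of this small-frequency argument. Two smaller inaccuracies: under $u=\omega x$, small $x$ corresponds to \emph{large} frequencies, not small ones; and the oscillating parts of the cross terms collapse to product (rank-one-type) and Toeplitz-type kernels with decay, not to Hankel kernels in $\omega+\omega'$.
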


Lemmata \ref{2:form_first}, \ref{2:diff_tr_est} yield that Theorem \ref{1:remainder_formula} holds for $f\in H_{1/2}\cap \F^*L_1(\R)$ such that $x^2f(x)\in L_2(\R)$. The space of such functions is dense in $H_2(\R)$. Thereby in order to show that Theorem \ref{1:remainder_formula} holds for any $f\in H_2(\R)$ it is sufficient to establish continuity of all the expressions involved with respect to the $\norm{\cdot}_{H_2}$-norm. We devote Section \ref{sect:fredholm} to the proof of the continuity of the Laplace transform of regularized additive functionals.
\begin{lemma}\label{2:reg_ext}
We have that the map $\overline{S}_-:L_1(\R)\cap L_\infty(\R)\to L_2(\P^s, \Conf(\R))$ extends by continuity to a map $L_2(\R)\cap L_\infty(\R)\to L_1(\P^s, \Conf(\R))$. Further, we have that $\E^se^{\overline{S}_f}$ is continuous with respect to $\norm{\cdot}_{L_2}+\norm{\cdot}_{L_\infty}$-norm.
\end{lemma}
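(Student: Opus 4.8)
The plan is to derive both assertions from a single second-moment estimate together with the Fredholm-determinant identity of Proposition~\ref{2:mult_formula}. For the extension of $\overline{S}_\cdot$ I would begin with the standard variance formula for a determinantal process with a self-adjoint projection kernel: for $g\in L_1(\R)\cap L_\infty(\R)$,
\[
\E^s(\overline{S}_g)^2=\frac12\iint_{\R^2}\abs{K^s(x,y)}^2\bra{g(x)-g(y)}^2\,dx\,dy\le 2\int_\R g(x)^2K^s(x,x)\,dx,
\]
the inequality following from $(g(x)-g(y))^2\le2(g(x)^2+g(y)^2)$ and $\int_\R\abs{K^s(x,y)}^2\,dy=K^s(x,x)$. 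By the diagonalization of $K^s$ (Theorem~\ref{3:Diagonalization}) the density $K^s(x,x)$ is bounded on $\{\abs x\ge1\}$ and integrable on $\{\abs x\le1\}$, so $\E^s(\overline{S}_g)^2\le C(\norm{g}_{L_2}^2+\norm{g}_{L_\infty}^2)$ in general, and $\E^s(\overline{S}_g)^2\le C\norm{g}_{L_2}^2$ whenever $\supp g\subset\{\abs x\ge1\}$. Given $f\in L_2(\R)\cap L_\infty(\R)$ I would set $f_n=f\,\I_{[-n,n]}\in L_1\cap L_\infty$; since $f_n-f_m$ is supported away from the origin for $n>m\ge1$, the second bound gives $\E^s(\overline{S}_{f_n}-\overline{S}_{f_m})^2\le C\int_{\abs x>m}f^2\,dx\to0$, so $(\overline{S}_{f_n})$ is Cauchy in $L_2(\P^s)$ and I define $\overline{S}_f$ as its limit. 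Passing the first bound to the limit yields $\norm{\overline{S}_f-\overline{S}_{g}}_{L_2(\P^s)}\le C(\norm{f-g}_{L_2}+\norm{f-g}_{L_\infty})$, i.e.\ continuity of the extended linear map into $L_2(\P^s)\hookrightarrow L_1(\P^s)$.

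For continuity of $f\mapsto\E^s e^{\overline{S}_f}$ at a fixed $f_0$, I would write $h=f-f_0$, use linearity $\overline{S}_f=\overline{S}_{f_0}+\overline{S}_h$ and $\abs{e^t-1}\le\abs te^{\abs t}$, and apply Cauchy--Schwarz:
\[
\abs{\E^s e^{\overline{S}_f}-\E^s e^{\overline{S}_{f_0}}}\le\E^s\bigl[e^{\overline{S}_{f_0}}\abs{\overline{S}_h}e^{\abs{\overline{S}_h}}\bigr]\le\bigl(\E^s(\overline{S}_h)^2\bigr)^{1/2}\bigl(\E^s e^{2\overline{S}_{f_0}+2\abs{\overline{S}_h}}\bigr)^{1/2}.
\]
The first factor is $\le C(\norm h_{L_2}+\norm h_{L_\infty})\to0$ by the variance estimate above, so it remains to bound the second factor uniformly for $f_0$ fixed and $\norm h_{L_2}+\norm h_{L_\infty}\le1$; by a further Cauchy--Schwarz this reduces to bounding the exponential moments $\E^s e^{\pm4\overline{S}_{f_0}}$ and $\E^s e^{\pm4\overline{S}_h}$.

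To control these I would truncate again to compactly supported $f_n$, where Proposition~\ref{2:mult_formula} applies. Writing $T_g=\I_{[0,1]}\CO{g}\I_{[0,1]}=\I_{[0,1]}\CT g\CT^*\I_{[0,1]}$ and $\Phi(x)=\int_0^1\abs{\CT(\omega x)}^2\,d\omega$, so that $\Tr T_g=\int_\R g\Phi\,dx$, the proposition gives (using $\CO{1}=\I_+$) the identity $\E^s e^{\lambda\overline{S}_{f_n}}=e^{-\Tr T_{\lambda f_n}}\det(\I_{[0,1]}+T_{e^{\lambda f_n}-1})$. Taking logarithms, the divergent part linear in $\lambda f_n$ cancels and leaves
\[
\log\E^s e^{\lambda\overline{S}_{f_n}}=\int_\R\bra{e^{\lambda f_n}-1-\lambda f_n}\Phi\,dx+\Tr\bigl[\log(\I_{[0,1]}+T_{e^{\lambda f_n}-1})-T_{e^{\lambda f_n}-1}\bigr].
\]
Because $\I_{[0,1]}+T_{e^{\lambda f_n}-1}=\I_{[0,1]}\CT e^{\lambda f_n}\CT^*\I_{[0,1]}\ge0$, the spectrum of $T_{e^{\lambda f_n}-1}$ lies in $(-1,\infty)$, so the trace term equals $\sum_i(\log(1+a_i)-a_i)\le0$ and may be dropped. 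For the first term I would use $0\le e^{t}-1-t\le\tfrac12t^2e^{\abs t}$ together with the two properties $\Phi\in L_\infty(\{\abs x\ge1\})$ and $\Phi\in L_1(\{\abs x\le1\})$ --- which I would read off from $\Phi(x)=\tfrac1{2\pi x}\int_0^{x}\rho(u)^2\abs{Z_s(u)}^2\,du$ (for $x>0$) and the small- and large-argument asymptotics of ${}_1F_1$ --- to obtain $\int_\R(e^{\lambda f_n}-1-\lambda f_n)\Phi\,dx\le C_\lambda(\norm f_{L_2}^2+\norm f_{L_\infty}^2)$, uniformly in $n$. Fatou's lemma along the a.s.-convergent subsequence from the first step transfers this to $f$, and substituting into the Cauchy--Schwarz estimate closes the argument.

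The hard part is exactly this uniform exponential-moment bound. A direct trace-norm estimate of $T_{e^{\lambda f}-1}$ diverges once $f\notin L_1$, and the cumulant expansion of $\log\E^s e^{\lambda\overline{S}_f}$ has radius of convergence that shrinks as $\norm f_{L_\infty}$ grows, so neither yields a bound valid for arbitrary bounded $f$. Proposition~\ref{2:mult_formula} resolves both difficulties at once: it makes the cancellation of the divergent linear part explicit and reduces the estimate to the sign-definite, trace-class quantity $\int_\R(e^{\lambda f}-1-\lambda f)\Phi\,dx$, which $\norm f_{L_2}$ and $\norm f_{L_\infty}$ control for every $\lambda$ once the decay of $\Phi$ is in hand.
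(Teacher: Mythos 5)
Your proposal is correct, and its first half coincides with the paper's argument: the variance identity, the bound $\E^s\abs{\overline{S}_g}^2\le 2\int_\R g^2K^s(x,x)\,dx$ obtained from the projection property \eqref{3_eq:projector_integral}, and the extension by truncation are exactly what the paper does. For the continuity of the Laplace transform, however, you take a genuinely different route. The paper (Lemma \ref{3:laplace_cont}) invokes Proposition \ref{3:mult_cont} to write $\E^se^{\overline{S}_f}=\det{}_2(I+(e^f-1)K^s)\exp\bigl(\int_\R(e^f-1-f)K^s(x,x)\,dx\bigr)$ and then proves that each factor of this expression is continuous in $\norm{f}_{L_2(\R,d\mu_{K^s})}+\norm{f}_{L_\infty}$: the exponent via a contour-integral pointwise estimate followed by Cauchy--Bunyakovsky--Schwarz, and the regularized determinant via the identity $\norm{(e^f-e^g)K^s}_{\mathcal{J}_2}=\norm{e^f-e^g}_{L_2(\R,d\mu_{K^s})}$ together with the $\mathcal{J}_2$-continuity of $\det{}_2$. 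You instead stay entirely with the unregularized identity of Proposition \ref{2:mult_formula}, applied only to compactly supported truncations, and extract from it a uniform exponential-moment bound through the sign-definiteness $\sum_i(\log(1+a_i)-a_i)\le 0$, concluding by Fatou and Cauchy--Schwarz. Each approach buys something: the paper's yields an explicit, manifestly continuous formula for the Laplace transform on all of $L_2\cap L_\infty$, with no real-valuedness assumption, and this formula is in the spirit of what is needed later anyway; yours sidesteps the issue of identifying $\E^se^{\overline{S}_f}$ with the limit of the truncated expressions (only one-sided Fatou bounds are required), whereas in the paper this identification on the extended class implicitly rests on the $L_1(\P^s)$-continuity of multiplicative functionals from Proposition \ref{3:mult_cont}; your route also produces quantitative exponential-moment bounds as a by-product. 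Two remarks. First, your positivity argument uses that $T_{e^{\lambda f_n}-1}$ is self-adjoint with spectrum in $(-1,\infty)$, hence requires $f$ real-valued; since the lemma (and its later use for characteristic functions, where the argument is $ikf$) concerns complex-valued $f$ as well, you should reduce to the real part via $\abs{e^{\overline{S}_f}}=e^{\overline{S}_{\Re f}}$ and $\abs{e^z-1}\le\abs{z}e^{\abs{z}}$, which your Cauchy--Schwarz scheme accommodates with only bookkeeping changes. Second, your insistence on truncations is not a stylistic choice but a necessity: since $L_2(\R)\cap L_\infty(\R)\not\subset L_1(\R,d\mu_{K^s})$, Proposition \ref{2:mult_formula} cannot be applied to $f$ itself, and your argument correctly respects this.
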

We note that $\norm{\cdot}_{L_2}+\norm{\cdot}_{L_\infty} \le C\norm{\cdot}_{H_2}$ for some constant $C$.

The continuity of the exponential factor in $Q(f)$ follows from the second claim of Lemma \ref{2:diff_tr_est} since for a trace class operator $K$ the inequality $\abs{\Tr K}\le \norm{K}_{\mathcal{J}_1}$ holds. To show the continuity of the determinant we again employ the factorization properties of $G_-$, presented in Section \ref{sect:diagonalization}. We devote Section \ref{sect:rem} to the following statement.
\begin{lemma}\label{2:remainder_continuity}
We have that $Q(f)$ is continuous with respect to $\norm{\cdot}_{H_2}$-norm. 
 There exists some independent of $f$ constant $C$ such that
        \begin{align*}
        &\abs{Q(f)-1}\le CL(f)e^{CL(f)},\\
        &L(f) = (\normH{3/4}{f}+ \normH{2}{f})e^{8\norm{\hat{f}}_{L_1}}(\normH{3/4}{f}+ \normH{2}{f} + 1).
        \end{align*}
\end{lemma}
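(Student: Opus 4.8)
The plan is to prove Lemma~\ref{2:remainder_continuity} by analyzing each of the two factors on the right-hand side of \eqref{1_eq:remainder_formula} separately, establishing both the quantitative bound and continuity in the $H_2$-norm. The exponential factor $\exp(\Tr\I_{[1,\infty)}(\CO{f}-W_f)\I_{[1,\infty)})$ is the easier piece: by the second claim of Lemma~\ref{2:diff_tr_est} we have $\norm{\I_{[1,\infty)}\mathcal{R}_f\I_{[1,\infty)}}_{\mathcal{J}_1}\le C(\normH{3/4}{f}+\normH{2}{f})$, and since $\abs{\Tr K}\le\norm{K}_{\mathcal{J}_1}$, the trace is bounded by $C(\normH{3/4}{f}+\normH{2}{f})$. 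Continuity follows because $f\mapsto\mathcal{R}_f$ is linear in $f$ (the operators $\CO{f}$ and $W_f$ are), so $\mathcal{R}_{f}-\mathcal{R}_{g}=\mathcal{R}_{f-g}$, and the same $\mathcal{J}_1$-estimate applied to $f-g$ controls the difference of traces; exponentiating and using $\abs{e^a-e^b}\le\abs{a-b}e^{\max(\abs a,\abs b)}$ gives continuity of the exponential factor.

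The main work is the Fredholm determinant factor. First I would rewrite $\det(\I_{[1,\infty)}W_{e^{f_-}}\CO{e^{-f_+}}\CO{e^{-f_-}}W_{e^{f_+}}\I_{[1,\infty)})$ as $\det(\I+T_f)$ where $T_f$ is a trace class perturbation of the identity; the point is that when $f=0$ all four multiplication symbols are $1$, the operators $W_1$ and $\CO 1$ reduce to $\I_+$, and the compression to $[1,\infty)$ gives the identity, so $T_0=0$ and $\det(\I+T_0)=1$. To extract the quantitative estimate I would expand $e^{f_\pm}=1+(e^{f_\pm}-1)$ and use that the entries of the product differ from the corresponding identity-blocks by operators whose $\mathcal{J}_1$-norms are controlled by $\norm{\widehat{e^{f_\pm}-1}}_{L_1}$ together with the $H_{3/4}$- and $H_2$-norms coming from Lemma~\ref{2:diff_tr_est}. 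Here the Banach-algebra structure of $\F^*L_1(\R_\pm)$ is essential: it yields $\norm{\widehat{e^{f_\pm}-1}}_{L_1}\le e^{\norm{\hat f}_{L_1}}-1\le \norm{\hat f}_{L_1}e^{\norm{\hat f}_{L_1}}$, which is where the $e^{8\norm{\hat f}_{L_1}}$ appearing in $L(f)$ originates, the factor $8$ presumably accounting for the four symbols and the sign split. Combining these via $\abs{\det(\I+T)-1}\le\norm T_{\mathcal{J}_1}e^{\norm T_{\mathcal{J}_1}}$ (the standard Fredholm determinant Lipschitz bound) should produce exactly the stated form $\abs{Q(f)-1}\le CL(f)e^{CL(f)}$ once the two factors are multiplied.

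For the continuity statement I would show that $f\mapsto T_f$ is continuous from $H_2$ into $\mathcal{J}_1$. The subtlety is that the map $f\mapsto e^{f_\pm}$ enters through the $\F^*L_1$-norm, whereas continuity is asserted only in the weaker $H_2$-norm; so I must verify that $H_2$-convergence is enough to control the relevant differences. Concretely, for $f_n\to f$ in $H_2$ I would split each factor's dependence into a part estimated by $\norm{\cdot}_{H_{3/4}}+\norm{\cdot}_{H_2}$ via Lemma~\ref{2:diff_tr_est} (which embeds continuously into the relevant $H_2$-control) and a part estimated by the $\F^*L_1$-norm of the multiplier differences; for the latter I would use that $H_2\hookrightarrow\F^*L_1$ continuously, since $\int\abs{\hat f}\,d\lambda\le(\int(1+\abs\lambda^2)^{-2}\,d\lambda)^{1/2}\norm f_{H_2}$ by Cauchy-Schwarz, so $H_2$-convergence does imply $\F^*L_1$-convergence of $f_n$ and hence, by the Banach-algebra continuity of the exponential, of $e^{(f_n)_\pm}$. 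Then trace-class continuity of products and the continuity of $\det(\I+\cdot)$ on $\mathcal{J}_1$ finish the argument.

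The hard part will be the bookkeeping in the determinant factor: one must expand the four-fold operator product, identify precisely which cross-terms survive as trace class perturbations of the identity after compression to $[1,\infty)$, and keep the exponential-in-$\norm{\hat f}_{L_1}$ dependence uniform while simultaneously reducing the final continuity claim to the weaker $H_2$-topology. I expect the factorization properties of $G_-$ recalled in Theorem~\ref{4:WH_properties} to be the technical engine that lets the operator blocks be manipulated into the required identity-plus-trace-class form.
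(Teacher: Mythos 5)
Your handling of the exponential factor (linearity of $f\mapsto\mathcal{R}_f$ plus the second claim of Lemma \ref{2:diff_tr_est}) and your embedding $H_2(\R)\hookrightarrow\F^*L_1(\R)$ via Cauchy--Schwarz are both correct and agree with the paper. The gap is in the Fredholm determinant factor. You propose to expand $e^{f_\pm}=1+(e^{f_\pm}-1)$ and to bound the resulting blocks in $\mathcal{J}_1$ by $\norm{\widehat{e^{f_\pm}-1}}_{L_1}$ together with Lemma \ref{2:diff_tr_est}. This cannot work as stated: for a nonzero symbol $g\in\F^*L_1(\R_\pm)$ the operators $W_g$ and $\CO{g}$ are bounded but never compact (they are Wiener--Hopf-type operators), and compressing to $[1,\infty)$ does not improve this; hence in your additive expansion \emph{no individual cross-term is trace class} --- only the full sum is, through cancellations that the expansion destroys. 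The paper keeps the multiplicative structure instead: using Theorem \ref{4:WH_properties} (the identities $\CO{e^{-f_-}}\CO{e^{-f_+}}=\CO{e^{-f}}$ and $W_{e^{f_-}}W_{e^{-f}}W_{e^{f_+}}=I$, plus the triangularity of $W_{f_\pm}$ with respect to the splitting $[0,1]\cup[1,\infty)$) one derives the exact identity
\[
\mathcal{K} = \I_{[1, +\infty)}W_{e^{f_-}}\I_{[1, +\infty)}\bigl([\CO{e^{-f_+}},\CO{e^{-f_-}}]+\mathcal{R}_{e^{-f}}\bigr)\I_{[1, +\infty)}W_{e^{f_+}}\I_{[1, +\infty)},
\]
so that only two objects must be placed in $\mathcal{J}_1$: the remainder $\mathcal{R}_{e^{-f}}=\mathcal{R}_{e^{-f}-1}$, which is indeed controlled by Lemma \ref{2:diff_tr_est} combined with the Sobolev-algebra estimates for $e^{-f}-1$ (Proposition \ref{6:sob_prop}), and the commutator $[\CO{e^{-f_+}},\CO{e^{-f_-}}]$.

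That commutator is the missing idea in your proposal. It is not controlled by Lemma \ref{2:diff_tr_est}, nor by any $\F^*L_1$ bound; and although the second bullet of Theorem \ref{4:WH_properties} guarantees such commutators are trace class, it provides only a trace formula, not a norm estimate --- and a norm estimate is what both the quantitative bound $\abs{Q(f)-1}\le CL(f)e^{CL(f)}$ and the $H_2$-continuity claim require. The paper supplies a separate mechanism: by Theorem \ref{3:PW_th}, for arbitrary constants $\alpha,\beta$,
\[
[\CO{e^{-f_+}},\CO{e^{-f_-}}] = \I_+\CT (e^{-f_+}+\alpha)\abs{x}^{2i\Im s}\F^*\I_-\F \abs{x}^{-2i\Im s}(e^{-f_-}+\beta)\CT^*\I_+
\]
(Lemma \ref{6:comm_formula}), which exhibits the commutator as a product of two factors, each Hilbert--Schmidt with norm $\lesssim \normH{3/4}{\cdot}+\normH{2}{\cdot}$ of the exponentiated symbol by Lemma \ref{6:rem_est}; the bound $\norm{K_1K_2}_{\mathcal{J}_1}\le\norm{K_1}_{\mathcal{J}_2}\norm{K_2}_{\mathcal{J}_2}$ then yields the trace-norm control, and the same decomposition gives the $\mathcal{J}_1$-continuity of $f\mapsto\mathcal{K}$ in the $H_2$-topology. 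Without an argument of this kind (or some substitute for Lemmata \ref{6:comm_formula} and \ref{6:rem_est}), both assertions of the lemma remain unproved in your approach.
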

Since the integral
\[
\int_0^\infty \omega\hat{f}(\omega)\hat{f}(-\omega)d\omega
\]
is continuous with respect to $\norm{f}_{H_2}$, all statements above imply Theorem \ref{1:remainder_formula}. Theorem \ref{1:mult_formula} follows from the inequality in Lemma \ref{2:remainder_continuity}. We conclude the proof of Corollary \ref{1:KS_conv} in Section~\ref{sect:corr_proof}.

\subsection{Structure of the paper}
The paper has the following structure. In Section \ref{sect:diagonalization} we recall the connection between the kernel $K^s$ and the transform $\CT$, introduced in the beginning. Further in Section \ref{sect:fredholm} we recall basic definitions and explain the connection between multiplicative functionals and Fredholm determinants, concluding the proof of Proposition \ref{2:mult_formula}. In the second part of Section \ref{sect:fredholm} we introduce the regularization of additive functionals and prove Lemma \ref{2:reg_ext}. In Section \ref{sect:form_proof} we use Theorem \ref{4:WH_properties} to prove Lemma \ref{2:form_first}. We devote Section \ref{sect:tr_class} to the analysis of the difference $\mathcal{R}_f = G_f-W_f$ and proof of Lemma \ref{2:diff_tr_est}. In Section \ref{sect:rem} we prove Lemma \ref{2:remainder_continuity}. We conclude the paper with Section \ref{sect:corr_proof}, where we deduce Corollary \ref{1:KS_conv} from Theorem \ref{1:mult_formula}.

\section{Background on the integral transform $\CT$}\label{sect:diagonalization}
Recall the notation
\begin{equation}\label{3:T_trans_form}
\begin{aligned}
&\CT(x) = \frac{e^{-ix}}{\sqrt{2\pi}}\psi(x)\rho(x)Z_s(x)e^{i\pi\Re s \sign x},\\
&\rho(x)=\abs{x}^{\Re s}e^{-\frac{\pi}{2}\Im s\sign x}, \quad \psi(x)=e^{-\frac{i\pi}{2}\Re s\sign x}\abs{x}^{-i\Im s}.
\end{aligned}
\end{equation}
For a compactly supported Borel function $h$ define the integral transform
\[
(\CT h)(\omega) = \int_\R\CT(\omega x)h(x)dx.
\]
\begin{theorem}[{\cite[Theorem~1.1]{G_25}}]\label{3:Diagonalization}
        The operator $\CT$ defines an isometry on the dense subset $L_1(\R)\cap L_\infty(\R)\subset L_2(\R)$ and extends to a unitary operator, diagonalizing the confluent hypergeometric kernel
        \[
         \CT^*\I_{[0, 1]}\CT = \phi K^s\phi^*,
        \]
        where $\phi(x) = e^{-\frac{i\pi}{2}\Re s\sign x}\abs{x}^{i\Im s}$.
        The equality may be treated as a relation between the corresponding kernels
        \[
        \int_0^1\overline{\CT(xt)}\CT(yt)dt = \phi(x)K^s(x, y)\overline{\phi(y)}.
        \]
 \end{theorem}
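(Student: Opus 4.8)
The plan is to prove the operator identity at the level of kernels, separating the two assertions: the isometry (a Plancherel relation) and the diagonalization (a Christoffel--Darboux relation). First I would compute $\overline{\CT(xt)}\CT(yt)$ explicitly. For $t>0$ the weights scale homogeneously, $\rho(xt)=t^{\Re s}\rho(x)$ and $\psi(xt)=t^{-i\Im s}\psi(x)$, while the sign factors $e^{i\pi\Re s\sign(\cdot)}$ are invariant. Collecting these, the phase data assembles into exactly $\tfrac1{2\pi}\phi(x)\overline{\phi(y)}$ (indeed $\tfrac1{\sqrt{2\pi}}\overline{\psi(x)}e^{-i\pi\Re s\sign x}=\tfrac1{\sqrt{2\pi}}\phi(x)$), and the powers of $t$ combine to $t^{s+\bar s}=t^{2\Re s}$. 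Cancelling $\phi(x)\overline{\phi(y)}\rho(x)\rho(y)$ and the $2\pi i(y-x)$ against the same factors in $\phi K^s\phi^*$, the diagonalization claim reduces to the single scalar identity
\[
\int_0^1 e^{i(x-y)t}\,t^{2\Re s}\,\overline{Z_s(xt)}\,Z_s(yt)\,dt=\frac{Z_s(x)\overline{Z_s(y)}-e^{i(x-y)}\overline{Z_s(x)}Z_s(y)}{i(y-x)}.
\]
One checks it immediately for $s=0$, where $Z_0\equiv1$ and both sides equal $(e^{i(x-y)}-1)/(i(x-y))$.

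The main work is this integral identity. Here $Z_s(\xi)$ solves $\xi Z_s''+(b-i\xi)Z_s'-i\bar s Z_s=0$ with $b=1+2\Re s$, which I would put in the formally self-adjoint form $\frac{d}{d\xi}\bigl(\xi^{b}e^{-i\xi}Z_s'(\xi)\bigr)=i\bar s\,\xi^{b-1}e^{-i\xi}Z_s(\xi)$. The two solutions entering the kernel are $Z_s$ and its Kummer partner $g(\xi)=e^{i\xi}\overline{Z_s(\xi)}=\Gam{1+\bar s}{1+2\Re s}\FO{1+\bar s}{1+2\Re s}{i\xi}$, using $b-\bar s=1+s$. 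The key algebraic input is the first-order contiguous relation
\[
\xi Z_s'(\xi)+\bar s\,Z_s(\xi)=\bar s\,\kappa\,g(\xi),\qquad \kappa=\Gamma(1+s)/\Gamma(1+\bar s),
\]
which follows from $z\,\tfrac{d}{dz}\FO{a}{b}{z}=a[\FO{a+1}{b}{z}-\FO{a}{b}{z}]$, together with its companion relation for $g$; these exhibit $(Z_s,g)$ as a solution of a $2\times2$ first-order (Dirac-type) system. Writing $p(t)=Z_s(yt)$ and $q(t)=\overline{Z_s(xt)}$, I would form the associated bilinear concomitant $t^{b}e^{i(x-y)t}B\bigl(\overline{Z_s(xt)},\overline{Z_s'(xt)},Z_s(yt),Z_s'(yt)\bigr)$ and show, using these relations, that the integrand is its total $t$-derivative up to the factor $i(y-x)$. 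Integrating over $[0,1]$, the endpoint $t=1$ reproduces the numerator on the right: through the contiguous relation the boundary derivatives are converted into the partner solution, which is what produces both cross terms. The endpoint $t=0$ contributes nothing because of the factor $t^{b}=t^{1+2\Re s}$; this is exactly where, and the only place where, the hypothesis $\Re s>-1/2$ (equivalently $b>0$) is used, and it is the same condition guaranteeing convergence of the integral at the origin.

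It then remains to upgrade the isometry on $L_1(\R)\cap L_\infty(\R)$ to unitarity. The isometry is the companion Plancherel/completeness relation $\int_\R\CT(\omega x)\overline{\CT(\omega y)}\,d\omega=\delta(x-y)$; together with the diagonalization identity and its analogue over $[1,\infty)$, which carries the complementary projection $\I-K^s$, this splits the spectral line into the two halves carried by $K^s$ and $\I-K^s$. I expect the main obstacle to be precisely this completeness step: unlike the Christoffel--Darboux identity it is global in the spectral variable and requires the large-argument asymptotics of $\FO{\bar s}{1+2\Re s}{i\xi}$, where $Z_s(\xi)$ splits into the two oscillatory contributions $\sim\xi^{\pm s}e^{\pm i\xi}$, to control the boundary terms at $t=\infty$ and to extract the diagonal $\delta$. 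Alternatively, one may invoke the independently established fact that $K^s$ is an orthogonal projection \cite[Corollary~1]{B_23}: then $\phi K^s\phi^*=\CT^*\I_{[0,1]}\CT$ is a projection, and matching ranges promotes the isometry to the asserted unitary diagonalization.
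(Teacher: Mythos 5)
First, a structural remark: the paper does not prove this theorem at all --- it is imported verbatim from \cite[Theorem~1.1]{G_25}, and the only in-paper content is the remark that the present normalization of $\CT$ differs from that of \cite{G_25} by the factor $\abs{x}^{2i\Im s}$, so the two statements are equivalent by direct substitution. Your proposal is therefore measured against what a self-contained proof would require, not against an argument in the text. Its first part --- the diagonalization identity --- is correct and checks out completely. The scaling bookkeeping is right: for $t>0$ one has $\rho(xt)=t^{\Re s}\rho(x)$, $\psi(xt)=t^{-i\Im s}\psi(x)$, $\overline{\psi(x)}e^{-i\pi\Re s\sign x}=\phi(x)$, and the theorem reduces exactly to your scalar identity. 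That identity does follow from your contiguous relations: writing $g(\xi)=e^{i\xi}\overline{Z_s(\xi)}$ and $\kappa=\Gamma(1+s)/\Gamma(1+\bar s)$, the relations $\xi Z_s'+\bar s Z_s=\bar s\kappa g$ and $\xi g'=(i\xi-s)g+s\kappa^{-1}Z_s$ give, for
\[
G(t)=t^{2\Re s}e^{-iyt}\bigl[Z_s(xt)g(yt)-g(xt)Z_s(yt)\bigr],
\]
the exact derivative $G'(t)=i(y-x)\,t^{2\Re s}e^{-iyt}g(xt)Z_s(yt)$, which is $i(y-x)$ times your integrand; $G(1)$ equals the numerator on the right-hand side, and $G(0^+)=0$ because the bracket vanishes at $t=0$ and $\Re s>-1/2$. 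So the kernel identity, i.e.\ the ``diagonalization'' half of the theorem, is genuinely proved by your scheme.

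The gap is the isometry/unitarity half, and it is not a technicality. Your first route --- the completeness relation $\int_\R\overline{\CT(\omega x)}\CT(\omega y)\,d\omega=\delta(x-y)$ --- is precisely the hard, global content of \cite{G_25} (in the present paper it surfaces as Theorem \ref{3:PW_th}, the Paley--Wiener-type identity $\CT^*\I_+\CT=\abs{x}^{2i\Im s}\F^*\I_+\F\abs{x}^{-2i\Im s}$, together with its negative-frequency counterpart); you acknowledge this step but do not carry it out, and the Christoffel--Darboux computation gives no control over it. Your fallback via \cite[Corollary~1]{B_23} does not close the gap: knowing that $K^s$, hence $\phi K^s\phi^*=(\I_{[0,1]}\CT)^*(\I_{[0,1]}\CT)$, is an orthogonal projection says exactly that $\I_{[0,1]}\CT$ is a partial isometry, and this does not imply that $\CT$ is isometric, let alone unitary; for that one needs the ranges of the whole dilated family to exhaust $L_2(\R)$, which is again a completeness statement. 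Moreover, dilation covariance only yields $\CT^*\I_{[0,a]}\CT=\phi K^s_a\phi^*$ with $K^s_a(x,y)=aK^s(ax,ay)$, so letting $a\to\infty$ can at best control $\CT^*\I_+\CT$, never $\CT^*\CT$: the negative spectral half-line is invisible to these identities (for $t<0$ the scaling relations acquire different factors, e.g.\ $\rho(xt)=\abs{t}^{\Re s}e^{\pi\Im s\sign x}\rho(x)$). For the same reason your assertion that the interval $[1,\infty)$ ``carries the complementary projection $\I-K^s$'' is incorrect --- the complement of $\phi K^s\phi^*$ splits between the spectral sets $[1,\infty)$ and $\R_-$. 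A complete proof must treat the negative spectral half and establish completeness by independent means (asymptotic/Plancherel-type analysis of ${}_1F_1$), which is exactly what \cite{G_25} supplies.
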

 \begin{remark}
 For $s=0$ we have $\F=\mathcal{T}_0$. In \cite{G_25} a different definition for $\CT$ is given. It differs from the formula \eqref{3:T_trans_form} as follows
 \[
 \tilde{\mathcal{T}}_s(x) = \CT(x)\abs{x}^{2i\Im s}.
 \]
The choice in the formula \eqref{3:T_trans_form} is more convenient for the purposes of the paper. Apparently, unitarities of both transforms are equivalent. The statements of this section follow from the ones given in \cite{G_25} by a direct substitution. 
 \end{remark}
 Recall that for a bounded Borel $f\in L_\infty(\R)$ we defined a bounded operator
 \[
 G_f = \I_+\CT f\CT^*\I_+.
 \]
 Let $\F^*L_1(\R)$ be image of $L_1(\R)$ under the Fourier transform. By the Young inequality it is a Banach algebra with pointwise multiplication and $\norm{\hat{\cdot}}_{L_1}$-norm. It may be decomposed $\F^*L_1(\R)\simeq \F^*L_1(\R_+)\oplus \F^*L_1(\R_-)$ into subalgebras of functions with support of the Fourier transform on $\R_\pm$.
 We have the following factorization properties for the introduced operator.
 \begin{theorem}[{\cite[Corollary~1.4]{G_25}}]\label{4:WH_properties}
\begin{itemize}
    \item For any $f, g\in \F^*L_1(\R_\pm)$ we have that $G_{fg} = G_fG_g$. Further, for $f_\pm\in\F^*L_1(\R_\pm)$ we have that $G_{f_+f_-} = G_{f_-}G_{f_+}$.
    \item For a function $f\in H_{1/2}(\R)\cap\F^*L_1(\R)$ we have that $[G_{f_-}, G_{f_+}]$ is trace class and its trace is
    \[
    \Tr[G_{f_-}, G_{f_+}] = \int_0^\infty \omega \hat{f}(\omega)\hat{f}(-\omega)d\omega.
    \]
    \item Let now $s=0$, so that $G_f=W_f$ is a Wiener-Hopf operator. For $f_\pm \in \F^*L_1(\R_\pm)$ we have
    \[
    \I_{[0, 1]}W_{f_+}\I_{[1, +\infty)} = 0, \qquad \I_{[1, +\infty)}W_{f_-}\I_{[0, 1]}=0.
    \]
    \end{itemize}
\end{theorem}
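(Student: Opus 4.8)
My plan rests on the observation that all three claims reduce to a single structural fact: for $f_\pm\in\F^*L_1(\R_\pm)$ the conjugated multiplications $\CT f_\pm\CT^*$ are triangular for the splitting $L_2(\R)=\I_-L_2\oplus\I_+L_2$, i.e.
\[
\I_-\CT f_+\CT^*\I_+=0,\qquad \I_+\CT f_-\CT^*\I_-=0 .
\]
Granting this, the first claim is pure algebra. Inserting $\I=\I_++\I_-$ and using $\CT f\CT^*\,\CT g\CT^*=\CT fg\CT^*$ yields
\[
\CO{fg}=\CO f\CO g+\I_+\CT f\CT^*\I_-\CT g\CT^*\I_+ .
\]
For $f,g$ of the same frequency sign the correction term carries the sub-factor $\I_-\CT g\CT^*\I_+$ (both in $\F^*L_1(\R_+)$) or $\I_+\CT f\CT^*\I_-$ (both in $\F^*L_1(\R_-)$), either of which vanishes, so $\CO{fg}=\CO f\CO g$; applying the identity with $f\leftarrow f_-,\ g\leftarrow f_+$ and the triangularity of $f_+$ gives $\CO{f_+f_-}=\CO{f_-}\CO{f_+}$.

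The triangularity is where the confluent hypergeometric structure enters, and I would extract it from the Euler representation
\[
Z_s(x)=\frac1{\Gamma(\bar s)}\int_0^1 e^{ixt}t^{\bar s-1}(1-t)^s\,dt,
\]
obtained from the integral formula for ${}_1F_1$ after cancelling the Gamma prefactor of $Z_s$. Writing $\CT(x)=A(x)e^{-ix}Z_s(x)$ with $A(x)=\tfrac1{\sqrt{2\pi}}|x|^{\bar s}e^{\frac{i\pi}2 s\,\sign x}$, it suffices by linearity and $\norm{\hat\cdot}_{L_1}$-density to treat $f_+=e_\lambda$, $e_\lambda(x)=e^{i\lambda x}$, $\lambda\ge0$, and to show the kernel $\int_\R\CT(\xi x)e^{i\lambda x}\overline{\CT(yx)}\,dx$ vanishes for $\xi<0<y$. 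In this quadrant the amplitude factorizes as $\tfrac1{2\pi}|\xi|^{\bar s}|y|^{s}\,|x|^{2\Re s}e^{-i\pi\Re s\,\sign x}$, and $|x|^{2\Re s}e^{-i\pi\Re s\,\sign x}=(-ix)^{2\Re s}$; after Fubini the inner integral becomes
\[
\int_\R(-ix)^{2\Re s}e^{i\Omega x}\,dx,\qquad \Omega=\xi(t-1)+\lambda+y(1-t'),
\]
where $t,t'\in[0,1]$ are the two Euler variables. Since $(-ix)^{2\Re s}$ is the boundary value of a function holomorphic in the upper half-plane, its Fourier transform is supported in $\{\Omega\le0\}$; but $\xi<0<y$ and $\lambda\ge0$ force $\Omega\ge\lambda\ge0$, with $\Omega>0$ off a null set, so the kernel vanishes. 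The hypothesis $\Re s>-1/2$ makes $(-ix)^{2\Re s}$ a tempered distribution, legitimising the Paley-Wiener step. The second identity is symmetric: negative frequencies produce $(ix)^{2\Re s}$ and $\Omega\le0$.

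The third claim is the degenerate case $s=0$, where $\CT=\F$ and $\CO f=W_f$ has the convolution kernel $\tfrac1{\sqrt{2\pi}}\hat f(\xi-\omega)$. For $f_+\in\F^*L_1(\R_+)$ the transform $\hat{f_+}$ lives on $\R_+$, so the kernel is supported on $\{\xi\ge\omega\}$; intersecting with $\xi\in[0,1]$, $\omega\in[1,\infty)$ leaves only the null set $\xi=\omega=1$, whence $\I_{[0,1]}W_{f_+}\I_{[1,\infty)}=0$, and symmetrically $\I_{[1,\infty)}W_{f_-}\I_{[0,1]}=0$. This is the same one-sided support as above, now read at the cut point $1$ rather than at $0$.

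For the trace formula I would combine the identity of the first paragraph with triangularity to collapse the commutator to a product of two off-diagonal blocks,
\[
[\CO{f_-},\CO{f_+}]=\I_+\CT f_+\CT^*\I_-\,\CT f_-\CT^*\I_+,
\]
each factor being a Hankel-type operator; the $\dot H_{1/2}$ hypothesis is exactly what renders each factor Hilbert-Schmidt, hence the product trace class. Decomposing $f_\pm$ into $e_{\pm\lambda}$ reduces the trace to $\Tr(\I_+\CT e_\lambda\CT^*\I_-\CT e_{-\mu}\CT^*\I_+)$, whose two Hankel kernels I would compute by the same Euler-integral/one-sided-power mechanism. I expect the $\xi$- and $y$-integrations to localise this quantity onto $\lambda=\mu$ with weight $2\pi\lambda$, i.e. to the distributional kernel $2\pi\lambda\,\delta(\lambda-\mu)$, so that integrating against $\hat f(\lambda)\hat f(-\mu)$ returns $\int_0^\infty\omega\hat f(\omega)\hat f(-\omega)\,d\omega$. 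This last step is the main obstacle: one must verify that the $s$-dependent amplitudes $|\xi|^{\bar s}|y|^{s}$ together with the $\Gamma$-factors cancel so that the trace is genuinely independent of $s$ and coincides with its sine-process value, and one must produce the $\delta$ and the trace-class estimate directly from $\normH{1/2}{f}$ rather than from absolute convergence.
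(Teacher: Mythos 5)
Your proposal has a genuine gap, and it sits exactly at the claim the paper actually needs. The second bullet --- that $[\CO{f_-},\CO{f_+}]$ is trace class and that its trace equals $\int_0^\infty\omega\hat f(\omega)\hat f(-\omega)\,d\omega$ --- is not proved: you correctly reduce the commutator to a product of two off-diagonal blocks $\I_+\CT f_+\CT^*\I_-\,\CT f_-\CT^*\I_+$, but the Hilbert--Schmidt bound for each block in terms of $\normH{1/2}{f}$, the localisation of the trace onto a kernel $2\pi\lambda\,\delta(\lambda-\mu)$, and the cancellation of all $s$-dependent amplitudes are only conjectured (``I expect\dots''), and you name them yourself as the main obstacle. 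This bullet is precisely what produces the Gaussian variance in Theorem \ref{1:mult_formula}, so the proposal does not establish the statement. There is a second, quieter gap in your triangularity argument: the Euler representation $Z_s(x)=\frac1{\Gamma(\bar s)}\int_0^1 e^{ixt}t^{\bar s-1}(1-t)^s\,dt$ converges only for $\Re s>0$ (the factor $t^{\bar s-1}$ is non-integrable at $t=0$ otherwise), while the theorem covers all $\Re s>-1/2$; extending by analytic continuation is not automatic here, since both $s$ and $\bar s$ enter the kernel, and you do not address it.

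For comparison: the paper does not prove this theorem internally --- it is imported from \cite{G_25} --- but it records the structural fact that does all the work, namely Theorem \ref{3:PW_th}: $\CT^*\I_+\CT=\abs{x}^{2i\Im s}\F^*\I_+\F\abs{x}^{-2i\Im s}$. This single identity closes both of your gaps at once. Setting $V=\CT\abs{x}^{2i\Im s}\F^*$, the identity says exactly that $V$ is unitary and commutes with $\I_+$; since multiplication by $f$ commutes with $\abs{x}^{\pm 2i\Im s}$, one gets
\begin{equation*}
\CT f\CT^*=V\F f\F^*V^*,\qquad \CO{f}=V\,W_f\,V^*,
\end{equation*}
with the \emph{same} unitary $V$ for every $f$. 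All three bullets then reduce to the classical $s=0$ (Wiener--Hopf) statements: triangularity and the algebra identities transfer verbatim for the full range $\Re s>-1/2$ (no Euler integral needed); the commutator satisfies $[\CO{f_-},\CO{f_+}]=V[W_{f_-},W_{f_+}]V^*$, so its trace is manifestly independent of $s$ --- which is exactly the cancellation you could not verify --- and for $s=0$ the off-diagonal block $\I_+\F f_+\F^*\I_-$ is the Hankel kernel $\hat f_+(\xi-\omega)$ on $\xi>0>\omega$, whose Hilbert--Schmidt norm squared is $\int_0^\infty u\abs{\hat f(u)}^2du\le\normH{1/2}{f}^2$, giving both the trace-class property and the stated trace by the standard computation. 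Your own plan is essentially an attempt to re-derive the consequences of Theorem \ref{3:PW_th} by hand from integral representations; without that intertwining (or an equivalent), the second bullet remains open.
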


These factorization properties follow from a stronger fact, connecting the Paley-Wiener spaces with the spaces of functions, whose image under $\CT$ is supported on $\R_\pm$.
\begin{theorem}[{\cite[Theorem~1.3]{G_25}}]\label{3:PW_th}
We have
\[
\CT^*\I_+\CT = \abs{x}^{2i\Im s}\F^*\I_+\F\abs{x}^{-2i\Im s}.
\]
\end{theorem}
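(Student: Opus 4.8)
The plan is to recast the asserted operator identity as a ``one-sidedness'' (support) statement and then read it off from the integral representation of the confluent hypergeometric function. Since $\CT$ is unitary (Theorem~\ref{3:Diagonalization}) and multiplication by the unimodular function $\abs{x}^{2i\Im s}$ is unitary, both sides of the claimed equality are orthogonal projections. Writing $M=\abs{x}^{2i\Im s}$ and $V=\F M^{*}=\F\abs{x}^{-2i\Im s}$, the right-hand side is $M\F^{*}\I_+\F M^{*}=V^{*}\I_+V$, so the identity $\CT^{*}\I_+\CT=V^{*}\I_+V$ is, after conjugating by the unitaries, equivalent to the statement that
\[
W:=\CT\,\abs{x}^{2i\Im s}\,\F^{*}
\]
commutes with $\I_+$. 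Equivalently, its integral kernel
\[
W(\omega,\xi)=\frac{1}{\sqrt{2\pi}}\int_\R \CT(\omega x)\,\abs{x}^{2i\Im s}\,e^{i\xi x}\,dx
\]
must vanish in the two quadrants $\omega\xi<0$. This reduction is what I would carry out first.

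The analytic input is Euler's integral for ${}_1F_1$. For $\Re s>0$ it gives, after the Gamma prefactors in $Z_s$ cancel, the clean formula $Z_s(x)=\frac{1}{\Gamma(\bar s)}\int_0^1 e^{ixt}\,t^{\bar s-1}(1-t)^{s}\,dt$. Substituting this into $\CT(\omega x)$ and collecting the factors $\psi,\rho$ and $e^{i\pi\Re s\,\sign(\cdot)}$ together with the twist $\abs{x}^{2i\Im s}$, a short computation collapses all the unimodular pieces and the change of variable $u=1-t$ yields
\[
\CT(\omega x)\,\abs{x}^{2i\Im s}
=\frac{\abs{\omega}^{\bar s}}{\sqrt{2\pi}\,\Gamma(\bar s)}\;\abs{x}^{s}\,e^{\frac{i\pi}{2}s\,\sign(\omega x)}\int_0^1 e^{-i\omega x u}\,u^{s}(1-u)^{\bar s-1}\,du ,
\]
in which the entire $x$-dependence is a homogeneous power times a pure modulation $e^{-i\omega x u}$.

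The crux is then a half-plane support argument. I would recognize $\abs{x}^{s}e^{\frac{i\pi}{2}s\,\sign\omega\,\sign x}=e^{\pm i\pi s/2}(x\mp i0)^{s}$ for $\sign\omega=\pm1$, the boundary value of $z^{s}$ from the lower (resp.\ upper) half-plane. By closing the contour in the half-plane of analyticity, the Fourier transform $\int_\R (x\mp i0)^{s}e^{i\eta x}\,dx$ is supported in $\{\eta\ge0\}$ (resp.\ $\{\eta\le0\}$). Interchanging the $u$- and $x$-integrations, $W(\omega,\xi)$ becomes a superposition over $u\in[0,1]$ of these one-sided transforms at frequency $\eta=\xi-\omega u$; since $\omega u$ keeps the sign of $\omega$ throughout $u\in[0,1]$, the constraint $\eta\ge0$ for $\omega>0$ forces $\xi\ge\omega u\ge0$, so $W(\omega,\xi)=0$ whenever $\omega>0$ and $\xi<0$. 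The mirror computation (or the reflection $x\mapsto-x$ combined with conjugation of $s$) handles $\omega<0,\ \xi>0$, and together these give exactly that $W(\omega,\xi)\bigl(\I_+(\omega)-\I_+(\xi)\bigr)=0$, i.e.\ that $W$ commutes with $\I_+$.

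The hard part will be the rigorous bookkeeping rather than the mechanism. Euler's integral and the factor $(1-u)^{\bar s-1}$ converge only for $\Re s>0$, so I would prove the identity there on a convenient dense class (compactly supported functions in $L_1(\R)\cap L_\infty(\R)$, or Schwartz functions, where Fubini applies to the resulting absolutely convergent double integrals) and then extend to the full range $\Re s>-1/2$ by analytic continuation in $s$, using the continuity in $s$ of both unitary sides. The supporting analytic facts---interpreting $\int_\R(x\mp i0)^{s}e^{i\eta x}\,dx$ as a tempered homogeneous distribution with one-sided support, controlling the branch point at $x=0$ and the endpoint singularities at $u\in\{0,1\}$, and justifying the interchange of integrations against the distributional power $\abs{x}^{s}$---are standard (Gelfand--Shilov) but must be assembled with care; this is where essentially all the technical effort lies.
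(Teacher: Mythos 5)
A preliminary remark on the comparison itself: the paper does not prove Theorem \ref{3:PW_th} at all; it imports it verbatim from \cite[Theorem~1.3]{G_25}, so there is no internal proof to measure your argument against, and your proposal has to stand on its own. Most of it does. I checked the core computations: the reduction of the identity to the statement that $W=\CT\,\abs{x}^{2i\Im s}\,\F^{*}$ commutes with $\I_+$ is correct (both sides are conjugates of $\I_+$ by unitaries, and $W^{*}\I_+W=\I_+$ for unitary $W$ is equivalent to $\I_+W=W\I_+$); the Euler-integral formula $Z_s(x)=\frac{1}{\Gamma(\bar s)}\int_0^1 e^{ixt}t^{\bar s-1}(1-t)^{s}dt$ is right for $\Re s>0$ (the Gamma prefactors cancel, using $1+2\Re s-\bar s=1+s$); your collapsed expression for $\CT(\omega x)\abs{x}^{2i\Im s}$, including the phase $e^{\frac{i\pi}{2}s\sign(\omega x)}$, checks out; and the identification with the boundary values $(x\mp i0)^{s}$ together with the half-line support of their Fourier transforms is the standard Gelfand--Shilov fact, so the vanishing of the kernel of $W$ on the quadrants $\omega\xi<0$ does follow for $\Re s>0$.

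The genuine gap is the final step, the passage from $\Re s>0$ to the full range $\Re s>-1/2$. You propose ``analytic continuation in $s$, using the continuity in $s$ of both unitary sides,'' and neither half of that sentence works as stated. The kernel $\CT(x)$ depends on $s$ through $\abs{x}^{\Re s}$, $\abs{x}^{-i\Im s}$, $e^{-\frac{\pi}{2}\Im s\sign x}$ and ${}_1F_1(\bar s;1+2\Re s;ix)$, i.e.\ through $\bar s$, $\Re s$ and $\Im s$, none of which is a holomorphic function of $s$; there is no one-complex-variable analytic continuation to perform. And continuity in $s$ can only propagate an identity from the open set $\{\Re s>0\}$ to its closure, which never reaches any point with $\Re s<0$. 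To close this you need one of the following: (i) real-analyticity of matrix elements of both sides in the two real parameters $(\Re s,\Im s)$, plus the identity theorem for real-analytic functions on the connected region $\{\Re s>-1/2\}$ --- this demands locally uniform bounds, not mere continuity; (ii) polarization: replace the pair $(s,\bar s)$ by independent complex variables $(a,b)$, check holomorphy of matrix elements in $b$, run your Euler-integral argument for $\Re b>0$, continue in $b$, and only then restrict back to $b=\bar a$; or (iii) --- probably the cleanest --- avoid continuation altogether by regularizing the Euler integral at the endpoint $t=0$ (one subtraction, i.e.\ Hadamard finite part: write $e^{ixt}(1-t)^{s}=1+O(t)$ near $t=0$), which represents $Z_s(x)$ for every $\Re s>-1/2$ as the pairing of $e^{ixt}$ against a distribution still supported on $t\in[0,1]$. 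Since your support argument uses nothing about the weight on $[0,1]$ beyond its support, option (iii) makes the whole proof go through on the full parameter range directly.
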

\section{Multiplicative functionals and Fredholm determinants}\label{sect:fredholm}
\subsection{Basic definitions and proof of Proposition \ref{2:mult_formula}}
Recall that a configuration on $\R$ is a discrete subset without accumulation points. We denote the space of configurations by $\Conf(\R)$. Endow $\Conf(\R)$ with topology induced by action of $X\in\Conf(\R)$ on bounded Borel compactly supported functions $h$ by the formula $X(h) = \sum_{x\in X}h(x)$. A point process is a probability measure on $\Conf(\R)$ with the corresponding Borel $\sigma$-algebra.

For a compactly supported bounded Borel function $g$ define multiplicative and additive functional by the formulae
\[
\begin{aligned}
&\Psi_{1+g}(X) = \prod_{x\in X}(1+g(x)),\\
&S_g(X) = \sum_{x\in X}g(x),
\end{aligned}
\quad X\in \Conf(\R).
\]
In order to define how operators induce measures on $\Conf(\R)$ we first define what is locally trace class operator.

\begin{definition}\label{B:lt_def}
	A bounded operator $K$ is locally trace class if for any compact $B\subset\R$ we have that the operator $\I_BK\I_B$ is trace class.
\end{definition}

A point process $\P_K$ is determinantal if there exists a locally trace class operator such that for any compactly supported on $B$ bounded Borel function $g$ we have
\[
\E_K\Psi_{1+g} = \det(I+gK\I_B).
\]
We note that this condition defines the measure uniquely. However, different kernels may induce the same point process. The Macchi-Soshnikov Theorem \cite{M_75, S_00} asserts any locally trace class self-adjoint positive contraction of $L_2(\R)$ induces a determinantal poiint process.

Our proof of Proposition \ref{2:mult_formula} follows \cite[Section~4]{G_24} and Bufetov~\cite[Section~2.5]{B_19}. The argument is based on the regularization of the Fredholm determinant, which involves calculating a trace of an operator by integrating the diagonal. General case when such calculation is possible is covered by Mercer's theorem.

\begin{theorem}[Mercer Theorem, {\cite[Theorem~3.11.9]{S_15}}]\label{B:merc_th}
Let a continuous function $K(x, y)$ on $[a, b]^2$ induce a positive self-adjoint operator $K$ on $L_2([a, b])$. Then the operator is trace class. Further, we have
\begin{equation}\label{B_eq:diag_tr}
	\Tr K = \int_a^b K(x, x)dx.
\end{equation}
\end{theorem}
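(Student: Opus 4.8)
The plan is to prove Mercer's theorem through the spectral decomposition of $K$ together with a careful convergence analysis of its eigenfunction expansion. Since $K(x,y)$ is continuous on the compact square $[a,b]^2$ it is square integrable there, so $K$ is a Hilbert--Schmidt operator on $L_2([a,b])$, hence compact; being self-adjoint and positive, the spectral theorem furnishes an orthonormal basis $\{\phi_n\}$ of $L_2([a,b])$ of eigenfunctions with eigenvalues $\lambda_n\ge 0$. For each $n$ with $\lambda_n>0$ the relation $\phi_n=\lambda_n^{-1}K\phi_n$ and the continuity of $K$ show that $\phi_n$ is continuous on $[a,b]$.

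The first substantive step is the pointwise domination $\sum_{n=1}^N\lambda_n\abs{\phi_n(x)}^2\le K(x,x)$. To obtain it, consider the truncated operator $R_N=K-\sum_{n=1}^N\lambda_n\langle\cdot,\phi_n\rangle\phi_n$, which is again positive and has the continuous kernel $K_N(x,y)=K(x,y)-\sum_{n=1}^N\lambda_n\phi_n(x)\overline{\phi_n(y)}$. A positive operator with continuous kernel has a nonnegative diagonal: were $K_N(x_0,x_0)<0$ at some point, continuity would make $K_N$ negative on a neighbourhood of $(x_0,x_0)$, and pairing $R_N$ against an $L_2$ approximation of a point mass concentrated near $x_0$ would produce a negative value, contradicting positivity. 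Thus $K_N(x,x)\ge 0$, which is exactly the claimed inequality. Letting $N\to\infty$, the nonnegative increasing partial sums converge to some $g(x)=\sum_n\lambda_n\abs{\phi_n(x)}^2\le K(x,x)$; integrating and using orthonormality gives $\sum_{n=1}^N\lambda_n\le\int_a^bK(x,x)\,dx$, so $\sum_n\lambda_n<\infty$ and $K$ is trace class with $\Tr K=\sum_n\lambda_n$.

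It remains to identify $g(x)$ with $K(x,x)$ and to justify integrating the series term by term, which I would do by proving that the Mercer series $\sum_n\lambda_n\phi_n(x)\overline{\phi_n(y)}$ converges uniformly on $[a,b]^2$. The spectral decomposition already gives convergence of this series to $K$ in $L_2([a,b]^2)$; since a uniform limit of continuous functions is continuous and two continuous functions agreeing in $L_2$ agree everywhere, uniform convergence would force the limit to be $K$ and in particular yield $g(x)=K(x,x)$ for every $x$. Uniform convergence itself I would extract from the diagonal-dominance inequality $\abs{K_N(x,y)}^2\le K_N(x,x)K_N(y,y)$ --- valid for the positive continuous kernel $K_N$ by the positive semidefiniteness of the $2\times2$ matrix with entries $K_N(x_i,x_j)$ --- which bounds the off-diagonal tail of the series by its diagonal tail, reducing everything to uniform convergence of $s_N(x)=\sum_{n\le N}\lambda_n\abs{\phi_n(x)}^2$ to $g$. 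Once $g$ is known to be continuous, the latter follows from Dini's theorem, the $s_N$ being continuous and increasing to $g$ on a compact set. Setting $x=y$ in the established identity and integrating then gives $\int_a^bK(x,x)\,dx=\sum_n\lambda_n=\Tr K$, which is \eqref{B_eq:diag_tr}.

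The main obstacle is precisely this uniform convergence, because the natural tools interlock: Dini's theorem requires the limit $g$ to be continuous, yet continuity of $g$ is essentially the assertion $g=K(x,x)$ that one is trying to prove. Breaking the circularity is the delicate point --- one must either establish the identity $g=K(x,x)$ (equivalently the uniform smallness of the diagonal remainder $K_N(x,x)$) by a compactness argument on $[a,b]$ that exploits the monotone decrease of $K_N(x,x)$ in $N$ independently of Dini, or argue uniform convergence of the kernel series directly and only afterwards read off the diagonal identity. All remaining steps --- the spectral theorem, the nonnegativity of the diagonal, and the final integration --- are routine once this convergence is in hand.
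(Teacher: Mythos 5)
The paper itself does not prove this statement --- it quotes it from Simon's book --- so your attempt stands on its own. Its skeleton is the standard Mercer argument and most steps are sound: $K$ is Hilbert--Schmidt, hence compact, the spectral theorem gives an eigenbasis $\{\phi_n\}$ with $\lambda_n\ge 0$, eigenfunctions with $\lambda_n>0$ are continuous, the remainder kernel $K_N$ has nonnegative diagonal (your point-mass pairing argument is correct), and integration of $\sum_{n\le N}\lambda_n\abs{\phi_n(x)}^2\le K(x,x)$ gives $\sum_n\lambda_n<\infty$, so $K$ is trace class. But the proof is genuinely incomplete at the decisive point: you never establish $g(x)=K(x,x)$. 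You explicitly call this "the main obstacle," describe the circularity (Dini needs continuity of $g$, continuity of $g$ is essentially the claim), and offer two possible strategies without carrying either one out. As written, your argument proves only the inequality $\Tr K=\sum_n\lambda_n\le\int_a^b K(x,x)\,dx$, not the equality \eqref{B_eq:diag_tr}.

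The gap is fillable by a standard device, and once you see it the circularity dissolves: neither uniform convergence in both variables nor Dini's theorem is needed for the trace formula. Fix $x\in[a,b]$. By Cauchy--Bunyakovsky--Schwarz,
\[
\abs*{\sum_{n=M}^{N}\lambda_n\phi_n(x)\overline{\phi_n(y)}}\le\left(\sum_{n=M}^{N}\lambda_n\abs{\phi_n(x)}^2\right)^{1/2}\left(\sup_{z\in[a,b]}K(z,z)\right)^{1/2},
\]
and since $\sum_n\lambda_n\abs{\phi_n(x)}^2\le K(x,x)<\infty$ converges for this fixed $x$, the series $g(x,y)=\sum_n\lambda_n\phi_n(x)\overline{\phi_n(y)}$ converges uniformly in $y$; hence $g(x,\cdot)$ is continuous. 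The continuous function $d_x(y)=K(x,y)-g(x,y)$ is orthogonal to every $\phi_n$ (termwise integration is justified by the uniform convergence in $y$), and also to every $h\in\ker K$: indeed $\int g(x,y)h(y)\,dy=0$ since $h\perp\phi_n$, while $\int K(x,y)h(y)\,dy=(Kh)(x)$ is a continuous function vanishing almost everywhere, hence everywhere. Since the $\phi_n$ together with a basis of $\ker K$ are complete, $d_x=0$ in $L_2$ and, being continuous, identically zero. In particular $K(x,x)=\sum_n\lambda_n\abs{\phi_n(x)}^2$ for every $x$, and monotone convergence gives $\int_a^b K(x,x)\,dx=\sum_n\lambda_n=\Tr K$. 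Dini's theorem enters only if one wants the uniform convergence of the full Mercer series, which is not part of the statement you were asked to prove.
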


We will refer to formula \eqref{B_eq:diag_tr} as to calculation of trace over the diagonal. The formula may be extended to a more general case. Recall, however, that the kernel of an operator is defined up to a zero measure subsets. Since the measure of diagonal is zero, it is necessary to choose the kernel in a certain way, not changing the induced operator. Existence of such choice is described by the following lemma.\begin{lemma}\label{3:tr_ext}
Let $K$ be a self-adjoint trace class operator on $L_2(X)$, where $X\subset \R$ is measurable. Then there exists a kernel $K(x, y)$ of $K$ such that
\begin{equation}\label{3_eq:diag_form_ext}
\Tr K = \int_X K(x, x)dx.
\end{equation}
Further, for any $f\in L_\infty(X)$ we have
\begin{equation}\label{3_eq:diag_form_mult}
\Tr fK = \int_X f(x)K(x, x)dx.
\end{equation}
If the operator is positive, then a continuous $K(x, y)$ satisfies both equations.
\end{lemma}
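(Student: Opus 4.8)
The plan is to reduce everything to the spectral decomposition of $K$ and then to pin down, by hand, the one representative of the kernel that reproduces the trace. By the spectral theorem for self-adjoint compact operators I would write $K=\sum_n\lambda_n\langle\cdot,\phi_n\rangle\phi_n$, where $\{\phi_n\}$ is an orthonormal system in $L_2(X)$, the $\lambda_n$ are real, and $\sum_n\abs{\lambda_n}<\infty$ because $K$ is trace class, with $\Tr K=\sum_n\lambda_n$. The natural candidate is
\[
K(x,y)=\sum_n\lambda_n\phi_n(x)\overline{\phi_n(y)},
\]
and the entire content of the lemma is that \emph{this} representative, rather than an arbitrary one defined up to a null set, has a diagonal whose integral recovers the trace. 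I would first note that the finite-rank truncations $K_N=\sum_{n\le N}\lambda_n\langle\cdot,\phi_n\rangle\phi_n$ satisfy $\norm{K-K_N}_{\mathcal{J}_1}=\sum_{n>N}\abs{\lambda_n}\to 0$, hence converge in Hilbert--Schmidt norm, so their kernels converge in $L_2(X\times X)$; the $L_2(X\times X)$-limit is therefore a kernel of $K$, and a subsequence converges to it a.e.

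Next I would control the pointwise series and extract the two identities. The key estimate is Cauchy--Schwarz,
\[
\sum_n\abs{\lambda_n}\,\abs{\phi_n(x)}\,\abs{\phi_n(y)}\le\Bigl(\sum_n\abs{\lambda_n}\,\abs{\phi_n(x)}^2\Bigr)^{1/2}\Bigl(\sum_n\abs{\lambda_n}\,\abs{\phi_n(y)}^2\Bigr)^{1/2},
\]
combined with $\int_X\sum_n\abs{\lambda_n}\abs{\phi_n(x)}^2\,dx=\sum_n\abs{\lambda_n}<\infty$ by Tonelli, so the diagonal majorant $g(x)=\sum_n\abs{\lambda_n}\abs{\phi_n(x)}^2$ lies in $L_1(X)$ and is finite a.e. This makes the double series converge absolutely a.e.\ on $X\times X$ (so the pointwise sum agrees a.e.\ with the $L_2(X\times X)$-limit and is a genuine kernel of $K$) and makes $K(x,x)=\sum_n\lambda_n\abs{\phi_n(x)}^2$ well defined a.e.\ and integrable. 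Fubini then gives $\int_X K(x,x)\,dx=\sum_n\lambda_n\int_X\abs{\phi_n(x)}^2\,dx=\Tr K$, which is \eqref{3_eq:diag_form_ext}. For \eqref{3_eq:diag_form_mult} I would evaluate the trace in the eigenbasis, $\Tr(fK)=\sum_n\lambda_n\langle f\phi_n,\phi_n\rangle=\sum_n\lambda_n\int_X f(x)\abs{\phi_n(x)}^2\,dx$, and interchange the sum and integral, legitimate since the integrand is dominated by $\norm{f}_{L_\infty}\,g(x)\in L_1(X)$, obtaining $\int_X f(x)K(x,x)\,dx$.

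For the final clause I would argue through the local Mercer theorem (Theorem \ref{B:merc_th}). If $K$ is positive and admits a continuous kernel, then for any compact $[a,b]\subset X$ the compression $\I_{[a,b]}K\I_{[a,b]}$ is positive, trace class, with continuous kernel equal to the restriction of the given one, so Theorem \ref{B:merc_th} yields $\int_a^b K(x,x)\,dx=\Tr(\I_{[a,b]}K\I_{[a,b]})$ and in particular $K(x,x)\ge 0$; letting $[a,b]\uparrow X$ and using monotone convergence on the diagonal together with $\Tr(\I_{[a,b]}K\I_{[a,b]})=\sum_n\lambda_n\int_{[a,b]}\abs{\phi_n}^2\uparrow\sum_n\lambda_n=\Tr K$ (here positivity, i.e.\ $\lambda_n\ge 0$, is essential) gives the two formulas for the continuous representative. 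I expect the main obstacle to be exactly this bookkeeping of representatives: since the diagonal of $X\times X$ is null, one must keep the two modes of convergence carefully separated, namely $L_2(X\times X)$ for the off-diagonal kernel, which guarantees it represents $K$, and pointwise/$L_1$ along the diagonal, which produces the trace, and then verify that a single explicit choice of kernel serves both roles simultaneously.
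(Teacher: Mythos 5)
Your treatment of the spectral kernel and of both trace identities for it is correct and coincides with the paper's argument: the paper takes the same representative $K(x,y)=\sum_j\lambda_j\psi_j(x)\overline{\psi_j(y)}$, observes that $\sum_j\abs{\lambda_j}\abs{\psi_j(x)}^2$ converges a.e.\ because $K$ is trace class, and reads off \eqref{3_eq:diag_form_ext}, \eqref{3_eq:diag_form_mult} from the definition of the trace; your extra bookkeeping (trace-norm convergence of truncations forcing $L_2(X\times X)$ convergence of the kernels, the Cauchy--Schwarz majorant $g(x)^{1/2}g(y)^{1/2}$, dominated convergence for the interchange) only makes explicit what the paper leaves implicit, and is sound.

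Where you genuinely diverge is the final clause, and there your argument is slightly short of what is claimed. The paper applies Theorem \ref{B:merc_th} directly to $K$ to get \eqref{3_eq:diag_form_ext}, and gets \eqref{3_eq:diag_form_mult} by writing $\Tr(fK)=\Tr(\sqrt{f}K\sqrt{f})$ for \emph{positive continuous} $f$, applying Mercer to the positive operator $\sqrt{f}K\sqrt{f}$ (whose kernel is again continuous), extending by continuity to positive $f\in L_\infty(X)$, and finally decomposing a general $f$ into four positive pieces. You instead compress to intervals, apply Mercer on $[a,b]$, and exhaust $X$ by monotone convergence; this cleanly yields \eqref{3_eq:diag_form_ext}, but it does not by itself ``give the two formulas'': nothing in the exhaustion addresses $\Tr(fK)$ for a general $f\in L_\infty(X)$. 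The quickest repair inside your framework is to note that Mercer on each $[a,b]$, combined with your already-proved identity \eqref{3_eq:diag_form_mult} for the spectral kernel applied to $f=\I_{[a,b]}$ (together with $\Tr(\I_{[a,b]}K\I_{[a,b]})=\Tr(\I_{[a,b]}K)$ by cyclicity), shows that the continuous diagonal and the spectral diagonal have equal integrals over every interval, hence agree a.e.; then \eqref{3_eq:diag_form_mult} for the continuous kernel is inherited from the spectral one. With that one line added, your proof is complete. (Both your exhaustion by intervals and the paper's direct appeal to Theorem \ref{B:merc_th}, which is stated on $[a,b]^2$, tacitly assume $X$ is an interval or a union of intervals; this is harmless in the paper's applications, where $X$ is $\R$ or $[0,1]$.)
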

\begin{proof}
Denote the spectral decomposition of $K$ by
\[
K = \sum_{\lambda_j\in\sigma(K)}\lambda_j \psi_j\langle \psi_j, -\rangle.
\]
We may then take the kernel to be
\[
K(x, y) = \sum_{\lambda_j\in\sigma(K)}\lambda_j \psi_j(x)\overline{\psi_j(y)},
\]
where $\sum_{\lambda_j\in\sigma(K)}\abs{\lambda_j}\abs{\psi_j(x)}^2$ converges almost surely on $X$ since $K$ is trace class. Trace formulae follow from the definition of trace.

If $K$ is positive and $K(x, y)$ is continuous then Theorem \ref{B:merc_th} implies the formula \eqref{3_eq:diag_form_ext}. If a function $f\in L_\infty(X)$ is positive and continuous then we apply Theorem \ref{B:merc_th} a bit differently
\[
\Tr(fK) = \Tr(\sqrt{f}K\sqrt{f}) = \int_Xf(x)K(x, x)dx.
\]
The equality may be extended by continuity to all positive $L_\infty(\R)$ functions. Last, any function may be represented as a sum of four positive $L_\infty(\R)$ functions multiplied by complex coefficients. This proves the equality \eqref{3_eq:diag_form_mult}.
\end{proof}

Let $\Pi$ be a locally trace class orthogonal projector on $L_2(\R)$. Assume that the kernel $\Pi(x, y)$ coincides with the spectral decomposition locally. Denote $d\mu_\Pi(x) = \Pi(x, x)dx$.
\begin{proposition}[{\cite[Proposition~5.2]{G_24}}]\label{3:mult_cont}
	For any $f\in L_1(\R, d\mu_\Pi)\cap L_\infty(\R)$ we have
	\begin{equation}\label{3_eq:mult_formula}
	\E_K\Psi_{1+f} = \det{}_2(I+f\Pi)e^{\int_\R \Pi(x, x)f(x)dx}.
	\end{equation}
	Multiplicative functionals induce a continuous mapping
	\[
	\Psi_{1+(-)}:f\in L_1(\R, d\mu_\Pi)\cap L_\infty(\R, d\mu_\Pi) \to L_1(\Conf(\R), \P_\Pi).
	\]
\end{proposition}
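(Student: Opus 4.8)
The plan is to establish the identity \eqref{3_eq:mult_formula} first for compactly supported bounded Borel $f$, where it is essentially a rewriting of the defining relation of $\P_\Pi$, and then to propagate it to all $f\in L_1(\R, d\mu_\Pi)\cap L_\infty(\R)$ together with the continuity claim by a single approximation argument.

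Suppose first that $\supp f\subset B$ for a compact $B$. The defining relation of the determinantal measure gives $\E_\Pi\Psi_{1+f}=\det(I+f\Pi\I_B)$, where $f\Pi\I_B=f(\I_B\Pi\I_B)$ is trace class because $\I_B\Pi\I_B$ is. Writing $\det(I+T)=\det{}_2(I+T)e^{\Tr T}$ for this trace class $T$, I would compute the trace over the diagonal: $\I_B\Pi\I_B$ is a positive trace class operator whose continuous kernel is $\Pi(x,y)$ on $B$, so Lemma \ref{3:tr_ext}, equation \eqref{3_eq:diag_form_mult}, yields $\Tr(f\Pi\I_B)=\int_\R f(x)\Pi(x,x)\,dx$. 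It then remains to replace $\det{}_2(I+f\Pi\I_B)$ by $\det{}_2(I+f\Pi)$. Both $f\Pi\I_B$ and $f\Pi$ are Hilbert--Schmidt, since $\norm{f\Pi}_{\mathcal{J}_2}^2=\int_\R\abs{f(x)}^2\Pi(x,x)\,dx\le\norm{f}_{L_\infty}\norm{f}_{L_1(d\mu_\Pi)}<\infty$; moreover, with $X=f\Pi$ and $Y=\I_B$ one has $f\Pi\I_B=XY$ and $f\Pi=\I_B f\Pi=YX$ (as $\I_B f=f$), so the two operators share the same nonzero eigenvalues with multiplicities and therefore the same value of $\det{}_2$. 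This proves \eqref{3_eq:mult_formula} for compactly supported $f$.

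For general $f\in L_1(\R, d\mu_\Pi)\cap L_\infty(\R)$ I would set $f_n=f\I_{[-n,n]}$ and pass to the limit. Pointwise $\P_\Pi$-almost surely $\Psi_{1+f_n}\to\Psi_{1+f}$, because $\E_\Pi\sum_{x\in X}\abs{f(x)}=\int_\R\abs{f}\,d\mu_\Pi<\infty$ forces $\prod_{x\in X}(1+f(x))$ to converge absolutely almost surely. To obtain $L_1(\P_\Pi)$ convergence I would control second moments: $\E_\Pi\abs{\Psi_{1+f_n}}^2=\E_\Pi\Psi_{1+g_n}$ with $g_n=2\Re f_n+\abs{f_n}^2$ compactly supported and bounded, so the case just treated gives $\E_\Pi\abs{\Psi_{1+f_n}}^2=\det{}_2(I+g_n\Pi)e^{\int_\R g_n\,d\mu_\Pi}$, which stays bounded in $n$ because $g_n\Pi$ converges in Hilbert--Schmidt norm and $\int_\R g_n\,d\mu_\Pi$ converges. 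Uniform boundedness in $L_2(\P_\Pi)$ gives uniform integrability, and Vitali's theorem upgrades the almost sure convergence to convergence in $L_1(\P_\Pi)$; in particular $\Psi_{1+f}\in L_1(\P_\Pi)$ and $\E_\Pi\Psi_{1+f_n}\to\E_\Pi\Psi_{1+f}$. On the right-hand side $f_n\Pi\to f\Pi$ in Hilbert--Schmidt norm, since $\norm{(f-f_n)\Pi}_{\mathcal{J}_2}^2=\int_\R\abs{f-f_n}^2\Pi(x,x)\,dx\to0$ by dominated convergence, so $\det{}_2(I+f_n\Pi)\to\det{}_2(I+f\Pi)$ by continuity of $\det{}_2$, while $\int_\R f_n\,d\mu_\Pi\to\int_\R f\,d\mu_\Pi$. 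This extends \eqref{3_eq:mult_formula} to all admissible $f$. The continuity of $\Psi_{1+(-)}$ into $L_1(\P_\Pi)$ follows by the same scheme: given $f_k\to f$ in $\norm{\cdot}_{L_1(d\mu_\Pi)}+\norm{\cdot}_{L_\infty(d\mu_\Pi)}$, I would pass to a subsequence along which $S_{\abs{f_k-f}}\to0$ almost surely (possible since $\E_\Pi S_{\abs{f_k-f}}=\norm{f_k-f}_{L_1(d\mu_\Pi)}\to0$), giving almost sure convergence of the products, and then invoke the uniform $L_2(\P_\Pi)$ bound from the second-moment identity together with Vitali's theorem; a standard subsequence argument yields continuity of the full map.

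The main obstacle is precisely this passage from the compactly supported identity to general $f$. There is no direct estimate of $\E_\Pi\abs{\Psi_{1+f_n}-\Psi_{1+f}}$ at hand; instead the convergence must be extracted from almost sure convergence of the truncated products combined with the uniform integrability furnished by the second-moment computation, which itself relies on the already-established compactly supported case. One must also keep track of the fact that $\det{}_2(I+f\Pi)$, rather than a genuine Fredholm determinant, is the correct object, because $f\Pi$ is in general only Hilbert--Schmidt and not trace class when $\Pi(x,x)$ is not integrable; the exponential factor $e^{\int_\R f\,d\mu_\Pi}$ is exactly the regularization that compensates for this.
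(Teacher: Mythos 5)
The paper never proves this proposition: it is imported wholesale by citation from \cite[Proposition~5.2]{G_24}, and only the supporting infrastructure (Lemma \ref{3:tr_ext}, the remark following the proposition, and the Hilbert--Schmidt continuity technique reused in Lemma \ref{3:laplace_cont}) appears in this paper. So there is no internal proof to compare against; judged on its own merits, your argument is correct and is essentially the argument this infrastructure is built for. Your Step 1 is exactly what the paper's remark asserts: for compactly supported bounded $f$ the right-hand side of \eqref{3_eq:mult_formula} is a rewriting of the defining determinant $\det(I+f\Pi\I_B)$, via $\det(I+T)=\det{}_2(I+T)e^{\Tr T}$, the trace-over-diagonal formula, and $\det{}_2(I+AB)=\det{}_2(I+BA)$. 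Your extension step --- truncation $f_n=f\I_{[-n,n]}$, almost sure convergence of the products from $\E_\Pi S_{\abs{f}}=\int_\R\abs{f}\,d\mu_\Pi<\infty$, uniform integrability from the second-moment identity $\abs{\Psi_{1+f_n}}^2=\Psi_{1+g_n}$ with $g_n=2\Re f_n+\abs{f_n}^2$, Vitali's theorem on the probabilistic side, and $\mathcal{J}_2$-continuity of $\det{}_2$ together with the projector identity \eqref{3_eq:projector_integral} on the determinant side --- is the same scheme followed in \cite{G_24} and Bufetov \cite{B_19}, which the paper explicitly says it follows for the neighboring Proposition \ref{2:mult_formula}. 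Two points are worth tightening. First, in Step 1 the kernel of $\I_B\Pi\I_B$ need not be continuous; what licenses $\Tr(f\Pi\I_B)=\int_\R f\,d\mu_\Pi$ is the standing assumption that $\Pi(x,y)$ agrees locally with the spectral decomposition, i.e.\ precisely equation \eqref{3_eq:diag_form_mult} of Lemma \ref{3:tr_ext}; your citation of that lemma is the correct justification and the word ``continuous'' should be dropped. Second, in the continuity argument the phrase ``giving almost sure convergence of the products'' hides the only nontrivial estimate: along the subsequence where $S_{\abs{f_k-f}}\to0$ a.s.\ you need something like
\[
\abs*{\Psi_{1+f_k}(X)-\Psi_{1+f}(X)}\le e^{S_{\abs{f}}(X)+S_{\abs{f_k-f}}(X)}\,S_{\abs{f_k-f}}(X),
\]
valid a.s.\ because $S_{\abs{f}}<\infty$ a.s.; this should be stated explicitly, since $1+f$ may vanish at points of the configuration and one cannot argue through logarithms of the products.
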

\begin{remark}
The right-hand side of the expression \eqref{3_eq:mult_formula} is equal to the determinant of $I+f\Pi\I_B$, $B=\supp f$ if the function is bounded compactly supported. In this case by Lemma \ref{3:tr_ext} the expression is the definition of the regularized determinant.
\end{remark}

We note that Proposition \ref{3:mult_cont} is applicable to the kernel $K^s(x, y)$ defined in the formula \eqref{1_eq:CHK_def}, since it is continuous everywhere except zero. Let us show how Proposition \ref{3:mult_cont} implies Proposition \ref{2:mult_formula}.
\begin{proof}[Proof of Proposition \ref{2:mult_formula}]
Since 
\[
L_1(\R, dx)\cap L_\infty(\R, dx)\subset L_1(\R, K^s(x, x)dx)\cap L_\infty(\R, K^s(x, x)dx)
\]
by Proposition \ref{3:mult_cont} it is sufficient to establish that for any $f\in L_1(\R)\cap L_\infty(\R)$ the identities
\begin{equation}\label{3_eq:det_reg_equality}
\begin{aligned}
&\det{}_2(I+fK^s) = \det{}_2(I+\I_{[0, 1]}G_f\I_{[0, 1]}),\\
&\int_\R f(x)K^s(x, x)dx = \int_0^1 G_f(x, x)dx
\end{aligned}
\end{equation}
hold. By $G_f(x, y)$ we denoted the kernel of the operator $G_f$. Let us show how the assertion follows. It will be shown below that the operator $\I_{[0, 1]}G_f\I_{[0, 1]}$ is trace-class under the above assumption. Further, its trace may be calculated over the diagonal. So we have by the definition of the regularized determinant
\[
\det(I+\I_{[0, 1]} G_f\I_{[0, 1]}) = \det{}_2(I+\I_{[0, 1]}G_f\I_{[0, 1]})\exp\left(\int_0^1 G_f(x, x)dx\right).
\]
Therefore the expressions \eqref{3_eq:det_reg_equality} and the identity \eqref{3_eq:mult_formula} yield that
\[
\det(I+\I_{[0, 1]}G_f\I_{[0, 1]}) = \E^s\Psi_{1+f},
\]
which is the desired claim, once $f=e^{g}-1$ is substituted. In the latter case we have $\Psi_{1+f} = \Psi_{e^g}=e^{S_g}$. Last, we recall that
\[
\E^s S_f = \int_\R K^s(x, x)f(x)dx.
\]
As was noted above, by the second equality \eqref{3_eq:det_reg_equality} the right-hand side equals $\Tr(\I_{[0, 1]}G_f\I_{[0, 1]})$, which concludes the proof by a direct substitution.

We proceed to the proof of identities \eqref{3_eq:det_reg_equality}. First by Theorem \ref{3:Diagonalization} we have that the integrals of diagonals coincide
\[
\int_\R f(x)K^s(x, x)dx = \int_\R f(x)\int_0^1 \abs{\CT(xt)}^2dt = \int_0^1dt\int_\R\CT(tx)f(x)\CT^*(tx)dx = \int_0^1\CO{f}(t, t)dt,
\]
where $G_f(x, y)$ stands for the kernel of the operator $G_f$. We note that the kernel $G_f(x, y)$ is continuous under our assumptions on $f$. Further, $G_f$ is positive if the function $f$ is positive. Decomposing the function and, consequently, the respective operator into the sum of four positive continuous functions with complex coefficients, we conclude by Theorem \ref{B:merc_th} that $\I_{[0, 1]}G_f\I_{[0, 1]}$ is trace class and that
\[
\int_0^1\CO{f}(t, t)dt = \Tr(\I_{[0, 1]}\CO{f}\I_{[0, 1]})
\]
holds.

Next let us show that
\[
\det{}_2(I+fK^s) = \det{}_2(I+\I_{[0, 1]}\CO{f}\I_{[0, 1]}).
\]
Using unitary equivalence and conjugating by $\CT$ one derives
\[
\det{}_2(I+f\CT^*\I_{[0, 1]}\CT) = \det{}_2(I+\CT f\CT^*\I_{[0, 1]}),
\]
where, since $\det{}_2(I+AB) = \det{}_2(I+BA)$, we have
\[
\det{}_2(I+\CT f\CT^*\I_{[0, 1]}) = \det{}_2(I+\I_{[0,1]}\CO{f}\I_{[0, 1]}).
\]
 
 Proposition \ref{2:mult_formula} is proved.
\end{proof}

\subsection{Regularization of additive functionals and proof of Lemma \ref{2:reg_ext}}

By Proposition \ref{3:mult_cont} multiplicative functionals define almost surely finite functions on $\Conf(\R)$ only for bounded absolutely integrable functions. However, it is possible to extend their definition to a wider class of functions. Let us give an example. Recall that the sine process is translationally invariant, which yields that the sum
\[
\sum_{x\in X}\frac{1}{i+x}, \quad X\in \Conf(\R)
\]
almost surely does not converge absolutely under the sine process measure. However, the limit
\[
\lim_{n\to\infty}\sum_{x\in X, \abs{x}\le n}\frac{1}{i+x}, \quad X\in \Conf(\R)
\]
almost surely exists. Formally, despite the additive functional $S_{1/(x+i)}$ is not defined, the limit $S_{\I_{[-n, n]}/(x+i)}$ as $n\to \infty$ exists. Regularization of functionals was introduced by Bufetov \cite{B_18Q, B_19} and then applied to construction of Radon-Nikodym derivatives between Palm measures of determinantal point process (see \cite{B_18Q}).

We proceed to the precise construction.
Assume $\Pi$ is a locally trace class orthogonal projection operator on $L_2(\R)$ and let $\P_\Pi$ be the respective determinantal measure. As above we also assume that the kernel $\Pi(x, y)$ is chosen in the sense of Lemma \ref{3:tr_ext} locally. We note that the kernel $K^s(x, y)$ satisfies this condition. Recall the notation $d\mu_\Pi = \Pi(x, x)dx$.

First for $f\in L_1(\R, d\mu_\Pi)\cap L_\infty(\R)$ define the regularized additive functional
\[
\overline{S}_f = S_f-\E_\Pi S_f.
\]
It may be shown directly via correlation functions that its variance is
\[
\E_\Pi\abs*{\overline{S}_f}^2-\abs*{\E_\Pi \overline{S}_f}^2 = \int_{\R^2}\abs{f(x)-f(y)}^2\abs{\Pi(x, y)}^2dxdy.
\] 
Self-adjointness of $\Pi$ and the projector property $\Pi^2=\Pi$ yield
\begin{equation}\label{3_eq:projector_integral}
\int_\R\Pi(x, y)\overline{\Pi(x, y)}dy = \int_\R\Pi(x, y)\Pi(y, x)dy = \Pi(x, x),
\end{equation}
from which we deduce
\[
\text{Var } \overline{S}_f \le 2\norm{f}_{L_2(\R, d\mu_\Pi)}.
\]
Thereby the map
\[
\overline{S}_-:L_1(\R, d\mu_\Pi)\cap L_\infty(\R) \to L_2(\Conf(\R), \P_\Pi).
\]
may be extended by continuity to a map on $L_2(\R, d\mu_\Pi)\cap L_\infty(\R)$.

\begin{definition}\label{3:reg_def}
For a measure $\P_\Pi$ a regularized additive functional $\overline{S}_f$ is a map 
$$L_2(\R, d\mu_\Pi)\cap L_\infty(\R) \to L_2(\Conf(\R), \P^s),$$
obtained as an extension by continuity of $S_f-\E S_f$ from the dense subset of bounded Borel compactly supported functions. Since any such function $f\in L_2(\R, d\mu_\Pi)\cap L_\infty(\R)$ may be approximated in $\norm{\cdot}_{L_2(\R, d\mu_\Pi)}+\norm{\cdot}_{L_\infty}$-norm by the sequence $\I_{[-n, n]}f$, it is $\P_\Pi$-a. s. equal to
\[
\overline{S}_f(X) = \lim_{k\to\infty} \left(\sum_{x\in X, \abs{x}\le n_k}f(x) - \int_{-n_k}^{n_k} f(x)\Pi(x, x)dx\right)
\]
for some subsequence $n_k$, possibly depending on $f$.
\end{definition}

In order to prove Proposition \ref{2:reg_ext} one needs continuity of the Laplace transform.
\begin{lemma}\label{3:laplace_cont}
We have that $\E_\Pi^{\overline{S}_f}$ is continuous with respect to the $\norm{f}_{L_2(\R, d\mu_\Pi)}+ \norm{f}_{L_\infty}$-norm.
\end{lemma}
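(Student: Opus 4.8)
The plan is to prove continuity by combining the $L_2$-convergence of the regularized functionals, already established above, with a uniform exponential-moment bound that yields uniform integrability. Fix $f\in L_2(\R, d\mu_\Pi)\cap L_\infty(\R)$ and choose a sequence $f_n$ of bounded compactly supported Borel functions with $\norm{f-f_n}_{L_2(\R, d\mu_\Pi)}+\norm{f-f_n}_{L_\infty}\to 0$; such a sequence exists since these functions are dense. By the variance estimate preceding Definition \ref{3:reg_def}, the map $\overline{S}_-$ is continuous into $L_2(\Conf(\R), \P_\Pi)$, so $\overline{S}_{f_n}\to\overline{S}_f$ in $L_2(\P_\Pi)$, hence in $\P_\Pi$-probability. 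As $z\mapsto e^z$ is continuous, $e^{\overline{S}_{f_n}}\to e^{\overline{S}_f}$ in probability. It therefore suffices to show that $\{e^{\overline{S}_{f_n}}\}_n$ is uniformly integrable: then convergence in probability upgrades to convergence in $L_1(\P_\Pi)$ by Vitali's theorem, giving $\E_\Pi e^{\overline{S}_{f_n}}\to\E_\Pi e^{\overline{S}_f}$, and since the limit does not depend on the approximating sequence this proves both well-definedness and continuity of $f\mapsto\E_\Pi e^{\overline{S}_f}$.

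For uniform integrability it is enough to bound $\E_\Pi\abs{e^{\overline{S}_{f_n}}}^p$ uniformly in $n$ for some $p>1$. Since $\abs{e^{\overline{S}_{f_n}}}=e^{\overline{S}_{\Re f_n}}$, this reduces to an exponential-moment estimate for real-valued functions. For a bounded compactly supported real $g$ I would apply Proposition \ref{3:mult_cont} with $1+f=e^{\lambda g}$, so that $\Psi_{1+f}=e^{\lambda S_g}$; subtracting the mean $\E_\Pi S_g=\int_\R g(x)\,d\mu_\Pi(x)$ yields
\[
\E_\Pi e^{\lambda\overline{S}_g}=\det{}_2\!\bigl(I+(e^{\lambda g}-1)\Pi\bigr)\exp\left(\int_\R\bigl(e^{\lambda g(x)}-1-\lambda g(x)\bigr)\,d\mu_\Pi(x)\right).
\]

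The key step is to control both factors by $\norm{g}_{L_2(\R, d\mu_\Pi)}$ and $\norm{g}_{L_\infty}$. Using the projector identity \eqref{3_eq:projector_integral}, a multiplication operator $h$ satisfies $\norm{h\Pi}_{\mathcal{J}_2}^2=\int_\R\abs{h(x)}^2\,d\mu_\Pi(x)=\norm{h}_{L_2(\R,d\mu_\Pi)}^2$; combining this with the standard estimate $\abs{\det{}_2(I+A)}\le e^{\frac12\norm{A}_{\mathcal{J}_2}^2}$ and the elementary inequalities $\abs{e^{z}-1}\le\abs{z}e^{\abs{z}}$ and $\abs{e^{z}-1-z}\le\tfrac12\abs{z}^2e^{\abs{z}}$ produces a bound of the form
\[
\abs{\E_\Pi e^{\lambda\overline{S}_g}}\le\exp\left(C\lambda^2 e^{2\abs{\lambda}\norm{g}_{L_\infty}}\norm{g}_{L_2(\R,d\mu_\Pi)}^2\right).
\]
Applying this with $\lambda=p$ and $g=\Re f_n$, and using that $\norm{\Re f_n}_{L_\infty}$ and $\norm{\Re f_n}_{L_2(\R,d\mu_\Pi)}$ stay bounded along the convergent sequence, gives $\sup_n\E_\Pi e^{p\overline{S}_{\Re f_n}}<\infty$, the desired $L_p$-bound. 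The main obstacle is precisely this uniform exponential-moment estimate: one must verify that the Hilbert-Schmidt norm governs both the regularized determinant and the exponential correction, and that the identity of Proposition \ref{3:mult_cont}, valid for bounded compactly supported $g$, is legitimately invoked here. Once the bound is secured, uniform integrability and Vitali's convergence theorem complete the argument.
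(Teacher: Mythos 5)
Your proof is correct, but it takes a genuinely different route from the paper's. The paper also starts from Proposition \ref{3:mult_cont}, but then proves continuity of the right-hand side directly and quantitatively: the exponent $\int_\R(e^f-1-f)\,d\mu_\Pi$ is handled via a contour-integral representation of $e^f-e^g-f+g$ (using the entire function $(e^\lambda-1-\lambda)/\lambda^2$) together with the Cauchy--Bunyakovsky--Schwarz inequality, and the factor $\det{}_2(I+(e^f-1)\Pi)$ via its $\mathcal{J}_2$-continuity combined with the identity $\norm{(e^f-e^g)\Pi}_{\mathcal{J}_2}=\norm{e^f-e^g}_{L_2(\R,d\mu_\Pi)}\le e^{2(\norm{f}_{L_\infty}+\norm{g}_{L_\infty})}\norm{f-g}_{L_2(\R,d\mu_\Pi)}$, which rests on the same projector identity \eqref{3_eq:projector_integral} you invoke. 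So the paper obtains a local Lipschitz modulus of continuity for the explicit formula and extends by continuity, whereas you use the formula only as a source of a uniform exponential-moment bound (via $\abs{\det{}_2(I+A)}\le e^{\frac12\norm{A}_{\mathcal{J}_2}^2}$ and elementary inequalities) and then run a soft probabilistic argument: $L_2(\P_\Pi)$-convergence of $\overline{S}_{f_n}$, hence convergence in probability of $e^{\overline{S}_{f_n}}$, upgraded to $L_1(\P_\Pi)$-convergence by uniform integrability and Vitali's theorem. Your route buys something the paper leaves implicit: it shows directly that $e^{\overline{S}_f}$ is integrable for every $f\in L_2(\R,d\mu_\Pi)\cap L_\infty(\R)$ and that the intrinsically defined expectation $\E_\Pi e^{\overline{S}_f}$ --- not merely the continuous extension of the determinantal formula --- is the limit; the paper's route buys a quantitative (locally Lipschitz) modulus of continuity, which a uniform-integrability argument cannot provide. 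Two points you should tighten. First, your closing sentence only establishes convergence along sequences from the dense subset; full continuity then needs either the standard diagonal argument (for arbitrary $g_m\to f$, choose bounded compactly supported $d_m$ with $\norm{g_m-d_m}$ and $\abs{\E_\Pi e^{\overline{S}_{g_m}}-\E_\Pi e^{\overline{S}_{d_m}}}$ small), or the observation that your moment bound extends to all of $L_2(\R,d\mu_\Pi)\cap L_\infty(\R)$ by Fatou's lemma, so that the UI argument applies to arbitrary convergent sequences. Second, the identity $\abs{e^{\overline{S}_{f_n}}}=e^{\overline{S}_{\Re f_n}}$ uses $\Re\overline{S}_f=\overline{S}_{\Re f}$, which is immediate on the dense subset and survives the $L_2(\P_\Pi)$-extension; it is worth recording explicitly.
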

\begin{proof}
By Proposition \ref{3:mult_cont} we have
\[
\E_\Pi^{\overline{S}_f}=\det{}_2(I+(e^f-1)\Pi)e^{\int_\R (e^{f(x)}-1-f(x))\Pi(x, x)dx}.
\]

Let us show that the integral under the exponent is continuous with respect to the $\norm{f}_{L_2(\R, d\mu_\Pi)}+ \norm{f}_{L_\infty}$-norm. Define the entire function
\[
h(x) = \frac{e^x-1-x}{x^2}, \quad \abs{h(x)} \le e^{\abs{x}}.
\]
Let $\gamma$ be a contour of a radius $\norm{f}_{L_\infty}+1$ encircling zero. We have
\[
e^{f(x)}-1-f(x) = \int_\gamma h(\lambda)\frac{f^2(x)}{f(x)-\lambda}d\lambda.
\]
The difference may then be expressed
\[
e^{f(x)}-e^{g(x)}-f(x) + g(x) = \int_{\gamma_{f, g}} h(\lambda)\frac{f(x)g(x)(f(x)-g(x)) + \lambda(g^2(x)-f^2(x))}{(f(x)-\lambda)(g(x)-\lambda)},
\]
where $\gamma_{f, g}$ is a circle around zero with the radius of $1+\norm{f}_{L_\infty}+\norm{g}_{L_\infty}$. The integral on the right-hand side may be bounded by
\begin{multline*}
\abs*{e^{f(x)}-e^{g(x)}-f(x) + g(x)} \le e^{1+\norm{f}_{L_\infty}+\norm{g}_{L_\infty}}(1+\norm{f}_{L_\infty}+\norm{g}_{L_\infty})\times\\
\times\left(\abs{f(x)g(x)(f(x)-g(x))} + \abs{g^2(x)-f^2(x)}\right).
\end{multline*}
Thereby the integral is continuous with respect to the $L_1$ norm of the right-hand side. Using the Cauchy-Bunyakovsky-Schwarz inequality we conclude
\begin{align*}
&\norm{fg(f-g)}_{L_1(\R, d\mu_\Pi)} \le \norm{f}_{L_\infty}\norm{g}_{L_2(\R, d\mu_\Pi)}\norm{f-g}_{L_2(\R, d\mu_\Pi)}, \\
&\norm{f^2-g^2}_{L_1(\R, d\mu_\Pi)} \le \norm{f-g}_{L_2(\R, d\mu_\Pi)}\norm{f+g}_{L_2(\R, d\mu_\Pi)}.
\end{align*}
This proves the claim.

To show the continuity of the regularized determinant $\det{}_2(I+(e^f-1)\Pi)$ recall that it is continuous with respect to the Hilbert-Schmidt norm of $(e^f-1)\Pi$. Observe that for $f, g\in L_1(\R, d\mu_\Pi)\cap L_\infty(\R)$ the Hilbert-Schmidt norm of the difference is
\[
\norm*{(e^f-e^g)\Pi}_{\mathcal{J}_2}^2 = \int_{\R^2}\abs{e^{f(x)}-e^{g(x)}}^2\Pi(x, y)\overline{\Pi(x, y)}dxdy,
\]
from which, substituting the equality \eqref{3_eq:projector_integral}, we deduce that
\[
\norm*{(e^f-e^g)\Pi}_{\mathcal{J}_2} = \norm{e^f-e^g}_{L_2(\R, d\mu_\Pi)} \le e^{2(\norm{f}_{L_\infty}+\norm{g}_{L_\infty})}\norm{f-g}_{L_2(\R, d\mu_\Pi)}
\]
holds. This finishes the proof.
\end{proof}

Observe that for some constant $C$ we have
\[
\norm{\cdot}_{L_2(\R, d\mu_{K^s})} + \norm{\cdot}_{L_\infty} \le C(\norm{\cdot}_{L_2}+\norm{\cdot}_{L_\infty})
\]
With this observation Proposition \ref{2:reg_ext} follows from Lemma \ref{3:laplace_cont} directly.

\section{Derivation of the formula}\label{sect:form_proof}
As soon as Theorem \ref{4:WH_properties} is established, it is possible to apply operator-theoretic methods from \cite{BC_03, BE_03B, BEW_03, B_02} to calculation of the determinant from Proposition \ref{2:mult_formula}. In particular, we refer to \cite[Sect. 3]{G_24} for a brief exposition of the proof of Theorem \ref{1:remainder_formula} for $s=0$. Under assumption $\mathcal{R}_f = G_f-W_f\in\mathcal{J}_1$ the method is similar.

The calculation is based on two classical results about Fredholm determinant. First is the Ehrhardt's generalization of the Helton-Howe formula.

\begin{theorem}[Ehrhardt, \cite{E_03}]\label{B:helton_howe}
	For bounded operators $A, B$ with a trace class commutator we have that $e^Ae^Be^{-A-B}-I$ is trace class. Further, the following formula holds
	\[
	\det e^Ae^Be^{-A-B}=e^{\frac{1}{2}\Tr[A, B]}.
	\]
\end{theorem}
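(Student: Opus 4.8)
The plan is to interpolate between $I$ and the target operator through the one-parameter family
\[
G(t) = e^{tA}e^{tB}e^{-t(A+B)}, \qquad t\in[0, 1],
\]
which satisfies $G(0)=I$, $G(1) = e^Ae^Be^{-A-B}$, and is invertible for every $t$ with explicit inverse $G(t)^{-1} = e^{t(A+B)}e^{-tB}e^{-tA}$. I stress at the outset why a reduction to the finite-rank case followed by approximation is not the right route: in finite dimensions $\det e^X = e^{\Tr X}$ forces $\det(e^Ae^Be^{-A-B})=1$ while $\Tr[A,B]=0$, so the content of the theorem is precisely the infinite-dimensional phenomenon in which $[A,B]$ is trace class although $A$ and $B$ are not, and $\Tr[A,B]$ need not vanish. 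Differentiating in $t$ is therefore the natural device, since it isolates exactly this commutator.

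The key step is to compute the logarithmic derivative $G(t)^{-1}G'(t)$. Writing $C = [A, B]$ and using $\tfrac{d}{ds}(e^{-sB}Ae^{sB}) = e^{-sB}Ce^{sB}$, one has the identity
\[
e^{-tB}Ae^{tB} = A + \int_0^t e^{-sB}Ce^{sB}\,ds.
\]
Carrying out the three-term differentiation of $G$ and simplifying with the relations $e^{-tA}Ae^{tA}=A$, $e^{-tB}Be^{tB}=B$, and $e^{t(A+B)}(A+B)e^{-t(A+B)}=A+B$, all of the unbounded contributions cancel and I expect to obtain
\[
G(t)^{-1}G'(t) = e^{t(A+B)}\left(\int_0^t e^{-sB}Ce^{sB}\,ds\right)e^{-t(A+B)}.
\]
Since $C\in\mathcal{J}_1$ by hypothesis and $s\mapsto e^{-sB}Ce^{sB}$ is continuous in trace norm, the right-hand side is trace class; hence $G'(t) = G(t)\cdot\bigl(G(t)^{-1}G'(t)\bigr)$ is trace class, and integrating $G'(s)$ from $0$ to $t$ gives $G(t)-I\in\mathcal{J}_1$, which is the first assertion of the theorem.

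Finally, by the standard differentiation formula for the Fredholm determinant along a $\mathcal{J}_1$-valued path, $\tfrac{d}{dt}\log\det G(t) = \Tr\bigl(G(t)^{-1}G'(t)\bigr)$. Applying conjugation invariance of the trace (legitimate because the inner operator is trace class and $e^{t(A+B)}$ is bounded invertible), and then the same invariance under $e^{sB}$ inside the integral, I get
\[
\Tr\bigl(G(t)^{-1}G'(t)\bigr) = \int_0^t \Tr\bigl(e^{-sB}Ce^{sB}\bigr)\,ds = \int_0^t \Tr C\,ds = t\,\Tr[A, B].
\]
Integrating from $0$ to $1$ with $\log\det G(0)=0$ yields $\log\det G(1) = \tfrac12\Tr[A,B]$, which is the claim. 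The main obstacle is entirely functional-analytic rather than algebraic: one must justify the trace-norm continuity of $s\mapsto e^{-sB}Ce^{sB}$ and the interchange of trace with the $s$-integral, confirm the validity of the determinant differentiation formula for this trace-class path, and check that each manipulation of exponentials is legitimate for merely bounded, non-commuting $A, B$. Once these justifications are in place, the cancellation identity for $G(t)^{-1}G'(t)$ does all the real work.
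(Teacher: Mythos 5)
The paper does not prove this statement at all --- it is quoted verbatim as a theorem of Ehrhardt \cite{E_03} and used as a black box --- so there is no internal argument to compare yours against; judged on its own terms, your proposal is a correct, self-contained proof. The algebraic heart of it checks out: with $G(t)=e^{tA}e^{tB}e^{-t(A+B)}$, the three terms of the product rule simplify, after conjugating by $G(t)^{-1}=e^{t(A+B)}e^{-tB}e^{-tA}$, to
\[
G(t)^{-1}G'(t)=e^{t(A+B)}\bigl(e^{-tB}Ae^{tB}-A\bigr)e^{-t(A+B)}
=e^{t(A+B)}\left(\int_0^t e^{-sB}[A,B]e^{sB}\,ds\right)e^{-t(A+B)},
\]
exactly as you claim. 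The justifications you defer are all routine: $s\mapsto e^{-sB}[A,B]e^{sB}$ is $\mathcal{J}_1$-continuous because $\norm{XKY}_{\mathcal{J}_1}\le\norm{X}\,\norm{K}_{\mathcal{J}_1}\,\norm{Y}$ and $s\mapsto e^{\pm sB}$ is norm-continuous; hence the $s$-integral converges in $\mathcal{J}_1$, $G'$ is $\mathcal{J}_1$-continuous, and the fundamental theorem of calculus applied in $\mathcal{J}_1$ gives $G(1)-I\in\mathcal{J}_1$, which is the first assertion. The differentiation formula $\frac{d}{dt}\det G(t)=\det G(t)\,\Tr\bigl(G(t)^{-1}G'(t)\bigr)$ is the standard one for invertible $\mathcal{J}_1$-$C^1$ paths, the interchange of $\Tr$ with the integral is legitimate since $\Tr$ is $\mathcal{J}_1$-continuous, and $\Tr\bigl(e^{-sB}[A,B]e^{sB}\bigr)=\Tr[A,B]$ by cyclicity of the trace against bounded operators; solving the resulting scalar ODE with $\det G(0)=1$ (which avoids choosing a branch of $\log\det$) yields $\det G(1)=e^{\frac12\Tr[A,B]}$. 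Two minor remarks: the phrase ``unbounded contributions'' should read ``non-trace-class contributions,'' since $A$ and $B$ are bounded by hypothesis; and your opening observation is apt --- in finite dimensions both sides of the identity are trivially $1$, so the statement is a genuinely infinite-dimensional phenomenon and cannot be reached by finite-rank approximation, which is precisely why the interpolation argument that isolates the commutator is the right device.
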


In case $s=0$ Theorems \ref{B:helton_howe}, \ref{4:WH_properties} yield that
\[
\det\bra*{e^{-W_{f_+}}W_{e^f}e^{-W_{f_-}}} = \exp\bra*{\int_0^\infty \omega\hat{f}(\omega)\hat{f}(-\omega)d\omega}.
\]
This formula has been used by Basor and Ehrhardt to derive the asymptotics of the Bessel operator determinants (see \cite[Section 5.2]{BE_03}). Since $W_f$ and $G_f$ are unitarily equivalent by Theorem \ref{3:PW_th}, the formula still holds if one replaces all Wiener-Hopf operators by $G_-$. Under restriction $\mathcal{R}_f\in\mathcal{J}_1$ we show a different generalization.

\begin{lemma}\label{4:widom_formula}
Let $f\in \F^*L_1(\R)\cap H_{1/2}(\R)$ satisfy $\mathcal{R}_f \in \mathcal{J}_1$. Then we have
\[
\det(e^{-W_{f_+}}G_{e^f}e^{-W_{f_-}}) = \exp\left(\Tr\mathcal{R}_f+\int_0^\infty \omega\hat{f}(\omega)\hat{f}(-\omega)d\omega\right).
\]
\end{lemma}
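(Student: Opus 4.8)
The plan is to reduce the operator under the determinant to a product of three trace-class perturbations of the identity whose determinants can each be read off, while never splitting $\mathcal{R}_f$ into the pieces $\mathcal{R}_{f_+}$ and $\mathcal{R}_{f_-}$: the latter need not be trace class, so the comparison of $G_f$ with $W_f$ must be routed through the single operator $\mathcal{R}_f$, and all reordering must be routed through the trace-class commutators supplied by Theorem \ref{4:WH_properties}.

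First I would rewrite the symbol. Since $\F^*L_1(\R_\pm)$ are Banach algebras on which, by the first part of Theorem \ref{4:WH_properties}, $G$ restricts to a continuous homomorphism, the exponential series gives $G_{e^{f_\pm}} = e^{G_{f_\pm}}$; together with the mixed factorization $G_{f_+f_-} = G_{f_-}G_{f_+}$ (extended bilinearly to the unitization, where $G_1 = \I_+$ acts as the identity on the range), this yields $G_{e^f} = G_{e^{f_-}}G_{e^{f_+}} = e^{G_{f_-}}e^{G_{f_+}}$. Hence the operator under the determinant is $T := e^{-W_{f_+}}e^{G_{f_-}}e^{G_{f_+}}e^{-W_{f_-}}$.

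Next I would peel off two correction factors from a purely Wiener-Hopf skeleton. Writing $G_f = G_{f_-}+G_{f_+}$ and applying Ehrhardt's theorem (Theorem \ref{B:helton_howe}) with the trace-class commutator $[G_{f_-},G_{f_+}]$, the operator $R_0 := e^{-G_f}e^{G_{f_-}}e^{G_{f_+}}$ satisfies $R_0 - I\in\mathcal{J}_1$ and $\det R_0 = e^{\frac12\Tr[G_{f_-},G_{f_+}]}$. Writing $G_f = W_f+\mathcal{R}_f$ with $\mathcal{R}_f\in\mathcal{J}_1$ and using the standard identity $\det(e^{A+B}e^{-A})=e^{\Tr B}$ for bounded $A$ and $B\in\mathcal{J}_1$, the operator $R_1 := e^{G_f}e^{-W_f}$ satisfies $R_1 - I\in\mathcal{J}_1$ and $\det R_1 = e^{\Tr\mathcal{R}_f}$. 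Substituting $e^{G_{f_-}}e^{G_{f_+}} = e^{G_f}R_0$ and then $e^{G_f} = R_1e^{W_f}$ gives $T = e^{-W_{f_+}}R_1e^{W_f}R_0e^{-W_{f_-}}$, and inserting the pairs $e^{W_{f_+}}e^{-W_{f_+}}$ and $e^{-W_{f_-}}e^{W_{f_-}}$ factors this as $T = \tilde R_1\,T_W\,\tilde R_0$ with $\tilde R_1 = e^{-W_{f_+}}R_1e^{W_{f_+}}$, $\tilde R_0 = e^{W_{f_-}}R_0e^{-W_{f_-}}$, and the skeleton $T_W = e^{-W_{f_+}}e^{W_f}e^{-W_{f_-}}$. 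Each factor is a trace-class perturbation of the identity, so $T-I\in\mathcal{J}_1$ and the Fredholm determinant is multiplicative across them.

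Finally I would evaluate the three determinants. By similarity invariance $\det\tilde R_1 = \det R_1 = e^{\Tr\mathcal{R}_f}$ and $\det\tilde R_0 = \det R_0 = e^{\frac12\Tr[G_{f_-},G_{f_+}]}$. For the skeleton, conjugating by $e^{W_{f_+}}$ identifies $T_W$ with a conjugate of $(e^{W_{f_+}}e^{W_{f_-}}e^{-W_{f_+}-W_{f_-}})^{-1}$, so Ehrhardt's theorem gives $\det T_W = e^{\frac12\Tr[W_{f_-},W_{f_+}]}$. Multiplicativity then yields $\det T = \exp(\Tr\mathcal{R}_f+\tfrac12\Tr[W_{f_-},W_{f_+}]+\tfrac12\Tr[G_{f_-},G_{f_+}])$, and since both commutator traces equal $\int_0^\infty\omega\hat f(\omega)\hat f(-\omega)d\omega$ by Theorem \ref{4:WH_properties} (the first via its $s=0$ specialization, the second directly), the two halves combine to give the stated formula. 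I expect the main obstacle to be exactly the bookkeeping described above: arranging the factorization so that $G$ is compared to $W$ only through $\mathcal{R}_f$, and so that each intermediate factor is genuinely an element of $I+\mathcal{J}_1$, since the naive route through $\mathcal{R}_{f_\pm}$ is unavailable.
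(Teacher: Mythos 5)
Your proposal is correct and follows essentially the same route as the paper: both factor $G_{e^f}=e^{G_{f_-}}e^{G_{f_+}}$, split the determinant into the three factors $e^{G_{f_-}}e^{G_{f_+}}e^{-G_f}$, $e^{G_f}e^{-W_f}$, and a purely Wiener--Hopf commutator correction, and evaluate each via Theorem \ref{B:helton_howe} together with the trace identities of Theorem \ref{4:WH_properties} (your ``standard identity'' $\det(e^{A+B}e^{-A})=e^{\Tr B}$ is exactly the paper's inline computation $\det(e^{G_f}e^{-W_f})=e^{\Tr\mathcal{R}_f}$ using $\Tr[\mathcal{R}_f,W_f]=0$). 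The only difference is bookkeeping: the paper cyclically reorders the determinant once at the start, while you keep the original order and transport the correction factors by conjugation, which changes nothing of substance.
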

\begin{proof}
First we have that
\[
\det(e^{-W_{f_+}}G_{e^f}e^{-W_{f_-}}) = \det(G_{e^f}e^{-W_{f_-}}e^{-W_{f_+}}),
\]
where substitution of Theorem \ref{4:WH_properties}, which states that $G_{e^f} = e^{G_{f_-}}e^{G_{f_+}}$, gives
\[
G_{e^f}e^{-W_{f_-}}e^{-W_{f_+}} = e^{G_{f_-}}e^{G_{f_+}}e^{-G_f}e^{G_f}e^{-W_f}e^{W_f}e^{-W_{f_-}}e^{W_{f_+}}.
\]
Thereby we may express the determinant as a product
\[
\det(e^{-W_{f_+}}G_{e^f}e^{-W_{f_-}}) = \det(e^{G_{f_-}}e^{G_{f_+}}e^{-G_f})\det(e^{G_f}e^{-W_f})\det(e^{W_f}e^{-W_{f_-}}e^{-W_{f_+}}),
\]
where the first and the last factors by Theorems \ref{B:helton_howe}, \ref{4:WH_properties} contribute
\[
\det(e^{G_{f_-}}e^{G_{f_+}}e^{-G_f})\det(e^{W_f}e^{-W_{f_-}}e^{-W_{f_+}}) = \exp\left(\int_0^\infty \omega\hat{f}(\omega)\hat{f}(-\omega)d\omega\right).
\]
Last, by Theorem \ref{B:helton_howe} and the trace class condition we have
\[
\det(e^{G_f}e^{-W_f}) = \det(e^{G_f}e^{-W_f}e^{-\mathcal{R}_f})e^{\Tr\mathcal{R}_f} = e^{\Tr\mathcal{R}_f},
\]
where we used that $\Tr [G_f, W_f] = \Tr [\mathcal{R}_f, W_f] = 0$ and $\det e^{\mathcal{R}_f}=e^{\Tr\mathcal{R}_f}$. This finishes the proof.
\end{proof}

The connection between the determinant in Lemma \ref{4:widom_formula} and the Laplace transforms of additive functionals follows from the Jacobi-Dodgeson identity.
\begin{theorem}[Jacobi-Dodgeson identity, {\cite[Proposition~6.2.9]{S_05OPUC}}]\label{B:JD}
	Let $K$ be a trace class operator. Assume $I+K$ is invertible. Let $P$ be an operator 
	of orthogonal projection and $Q=I-P$. Then we have the relation
	\[
		\det(P(I+K)P) = \det(I+K)\det(Q(I+K)^{-1}Q).
	\]
\end{theorem}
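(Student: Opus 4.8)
The plan is to deduce the identity from the multiplicativity of the Fredholm determinant applied to a block (Schur complement) factorization of $I+K$ relative to the orthogonal splitting $\mathcal{H} = P\mathcal{H}\oplus Q\mathcal{H}$ of the ambient Hilbert space. Writing $M = I+K$, I would first record the standing convention that $\det(P(I+K)P)$ means the Fredholm determinant on the subspace $P\mathcal{H}$ of the compression $PMP$; equivalently, since $PMP + Q = I + PKP$ differs from the identity of $\mathcal{H}$ by the trace class operator $PKP$, one has $\det(P(I+K)P) = \det(PMP + Q)$ as a determinant on all of $\mathcal{H}$, and similarly $\det(Q(I+K)^{-1}Q) = \det(QM^{-1}Q + P)$, using that $(I+K)^{-1} - I = -K(I+K)^{-1}$ is trace class. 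This reduces every expression to an honest $\det(I + \text{trace class})$.

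Next I would decompose $M$ into blocks
\[
M = \begin{pmatrix} A & B\\ C & D\end{pmatrix},\qquad A = PMP,\ B = PMQ = PKQ,\ C = QMP = QKP,\ D = QMQ,
\]
acting between the summands $P\mathcal{H}$ and $Q\mathcal{H}$, where $B$ and $C$ are trace class because $K$ is. Assuming for the moment that $A$ is invertible on $P\mathcal{H}$, I would use the factorization
\[
M = \begin{pmatrix} I & 0\\ CA^{-1} & I\end{pmatrix}\begin{pmatrix} A & 0\\ 0 & S\end{pmatrix}\begin{pmatrix} I & A^{-1}B\\ 0 & I\end{pmatrix},\qquad S = D - CA^{-1}B.
\]
Each triangular factor has the form $I + N$ with $N$ an off-diagonal trace class operator satisfying $N^2 = 0$, so $\log(I+N) = N$ and $\Tr N = 0$, whence its determinant is $1$. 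Multiplicativity then gives $\det M = \det A\cdot\det S$, with $S = I_{Q\mathcal{H}} + (\text{trace class})$ so that $\det S$ is well defined.

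To finish I would invoke the standard block-inversion formula, whose lower-right corner reads $QM^{-1}Q = S^{-1}$ as operators on $Q\mathcal{H}$. Hence $\det(Q(I+K)^{-1}Q) = \det(S^{-1}) = (\det S)^{-1}$, and substituting into $\det M = \det A\cdot\det S$ yields exactly
\[
\det(P(I+K)P) = \det(I+K)\,\det(Q(I+K)^{-1}Q).
\]

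The main obstacle is the removal of the auxiliary hypothesis that $A = PMP$ be invertible, together with the bookkeeping needed to keep every determinant inside the trace class framework. For the former I would argue by continuity: all three determinants are continuous in $K$ with respect to the trace norm on the open set where $I+K$ is invertible (for the rightmost factor using continuity of inversion), while $A = I_{P\mathcal{H}} + PKP$ is Fredholm of index zero on $P\mathcal{H}$, so an arbitrarily small finite-rank perturbation of $K$ supported on $P\mathcal{H}$ renders $A$ invertible without destroying invertibility of $I+K$. Such perturbations are dense in trace norm, so the identity proved for invertible $A$ extends to the general case; in the degenerate case both sides vanish simultaneously, reflecting the fact that $PMP$ is invertible on $P\mathcal{H}$ precisely when $QM^{-1}Q$ is invertible on $Q\mathcal{H}$.
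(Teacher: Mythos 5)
Your proof is correct, but note that the paper contains no proof of this statement to compare with: it is quoted as known background from \cite[Proposition~6.2.9]{S_05OPUC}. Judged on its own, your Schur-complement argument is complete and sound: the interpretation of the compressed determinants as $\det(PMP+Q)$ and $\det(QM^{-1}Q+P)$ with $M=I+K$, the triangular factorization, the fact that the unipotent factors have determinant one, the inversion formula $QM^{-1}Q=S^{-1}$, and the final perturbation step (taking $F$ to be an isomorphism of $\ker(PMP)$ onto the orthogonal complement of the range of $PMP$, which makes $PMP+\epsilon F$ invertible for every small $\epsilon\ne 0$ while $I+K+\epsilon F$ stays invertible) all go through. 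However, the classical proof --- essentially the one in the cited reference --- is shorter and dispenses entirely with your main technical complication. Observe that $MP+Q$ is block lower triangular in the decomposition $\mathcal{H}=P\mathcal{H}\oplus Q\mathcal{H}$, with diagonal blocks $PMP$ and $I_{Q\mathcal{H}}$, while $P+M^{-1}Q$ is block upper triangular with diagonal blocks $I_{P\mathcal{H}}$ and $QM^{-1}Q$. Since any such block triangular operator factors as its block diagonal part times a unipotent operator, for instance
\[
\begin{pmatrix} X & 0 \\ C & I \end{pmatrix}=\begin{pmatrix} X & 0 \\ 0 & I \end{pmatrix}\begin{pmatrix} I & 0 \\ C & I \end{pmatrix},
\]
its determinant equals that of its diagonal part \emph{without any invertibility assumption on $X$}. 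Hence
\[
\det(PMP+Q)=\det(MP+Q)=\det\bigl(M(P+M^{-1}Q)\bigr)=\det(M)\,\det(QM^{-1}Q+P),
\]
and no approximation argument is needed. What your route buys in exchange is the explicit identification $QM^{-1}Q=(D-CA^{-1}B)^{-1}$, which explains why the two compressions are simultaneously invertible; the price is the density step, which, while valid, is the only delicate point of your write-up.
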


Theorem \ref{B:JD} is not applicable directly to the determinant in Proposition \ref{2:mult_formula}, since $G_{e^f-1}$ is not trace class. However, it may be expressed in terms of a determinant of the operator from Lemma \ref{4:widom_formula}, restricted to $L_2([0, 1])$. This calculation follows \cite{B_02, G_24}.

\begin{lemma}
For any $f\in L_1(\R)\cap \F^*L_1(\R)$ we have
\begin{equation}\label{4_eq:form_1}
\det(\I_{[0, 1]}G_{e^f}\I_{[0, 1]}) = e^{\hat{f}(0)}\det(\I_{[0, 1]}e^{-W_{f_+}}G_{e^f}e^{-W_{f_-}}\I_{[0, 1]}).
\end{equation}
\end{lemma}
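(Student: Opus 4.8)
The plan is to compress everything to $L_2([0,1])$ and exploit the triangularity of the Wiener--Hopf factors. Write $P=\I_{[0,1]}$, $Q=\I_{[1,\infty)}$ and set $W_\pm = \I_{[0,1]}W_{f_\pm}\I_{[0,1]}$, $M=\I_{[0,1]}G_{e^f}\I_{[0,1]}$, all viewed as operators on $L_2([0,1])$. The third bullet of Theorem \ref{4:WH_properties} says $PW_{f_+}Q=0$ and $QW_{f_-}P=0$, i.e. $W_{f_+}$ is lower-triangular and $W_{f_-}$ upper-triangular for the splitting $L_2(\R_+)=L_2([0,1])\oplus L_2([1,\infty))$. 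Triangularity is inherited by every power and hence by the exponential, so $\I_{[0,1]}e^{-W_{f_+}}=\I_{[0,1]}e^{-W_{f_+}}\I_{[0,1]}$, $e^{-W_{f_-}}\I_{[0,1]}=\I_{[0,1]}e^{-W_{f_-}}\I_{[0,1]}$, and moreover $\I_{[0,1]}e^{-W_{f_\pm}}\I_{[0,1]}=e^{-W_\pm}$ (the diagonal block of the exponential of a triangular operator is the exponential of its diagonal block). Collapsing the middle factor with $\I_{[0,1]}=\I_{[0,1]}^2$ then gives
\[
\I_{[0,1]}e^{-W_{f_+}}G_{e^f}e^{-W_{f_-}}\I_{[0,1]}=e^{-W_+}Me^{-W_-}
\]
on $L_2([0,1])$, so the right-hand determinant in \eqref{4_eq:form_1} is $\det(e^{-W_+}Me^{-W_-})$.

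It then remains to prove the scalar identity $\det(e^{-W_+}Me^{-W_-})=e^{-\Tr T}\det M$ with $T:=W_++W_-=\I_{[0,1]}W_f\I_{[0,1]}$, together with $\Tr T=\hat f(0)$. The trace is the easy part: $W_f$ is the convolution operator with kernel $\hat f(x-y)$, which is a genuine continuous function since $f\in L_1$ forces $\hat f$ continuous, so its diagonal is the constant $\hat f(0)$ and $\Tr T=\int_0^1\hat f(0)\,dx=\hat f(0)$. Trace-classness of $T$ follows by writing $W_f$ as a norm-convergent superposition $\int_\R f(t)\,\I_{[0,1]}e_t\langle e_t,\cdot\rangle\I_{[0,1]}\,dt$ (with $e_t(x)=e^{itx}$) of rank-one operators of unit trace norm, whence $\norm{T}_{\mathcal{J}_1}\le C\norm{f}_{L_1}$.

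The main obstacle is that $W_+$ and $W_-$ are individually \emph{not} trace class: each is a Volterra operator whose kernel jumps by $\hat f(0)$ across the diagonal, so I cannot simply split $\det$ into $\det(e^{-W_+})\det M\det(e^{-W_-})$. What rescues the argument is that $f\in L_1\cap\F^*L_1$ gives $\hat f\in L_1\cap L_\infty\subset L_2$, so each $W_\pm$ is at least Hilbert--Schmidt, while only the sum $T$ is trace class. I would therefore prove the scalar identity by the homotopy $\Phi(t)=\det(e^{-tW_+}Me^{-tW_-})$, $t\in[0,1]$. First one checks $\Phi$ is well defined: expanding $e^{-tW_\pm}-I$ in series, the two first-order (Hilbert--Schmidt, non-trace-class) terms combine into $-tT\in\mathcal{J}_1$, while all higher terms and the cross term $(e^{-tW_+}-I)(e^{-tW_-}-I)$ are products of two Hilbert--Schmidt operators, hence in $\mathcal{J}_1$; thus $e^{-tW_+}Me^{-tW_-}-I\in\mathcal{J}_1$ and the Fredholm determinant exists.

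Finally I compute the logarithmic derivative. With $A(t)=e^{-tW_+}Me^{-tW_-}$ one has $A'=-W_+A-AW_-$, so $A^{-1}A'=-A^{-1}W_+A-W_-=-T-A^{-1}[W_+,R]$, where $R=A-I\in\mathcal{J}_1$; this exhibits $A^{-1}A'$ as trace class. Moreover $\Tr(A^{-1}[W_+,R])=0$: indeed $\Tr[W_+,R]=0$, and writing $S=A^{-1}-I$ cyclicity gives $\Tr(S[W_+,R])=\Tr(W_+[R,S])=0$ since $R=A-I$ and $S=A^{-1}-I$ commute (both are functions of $A$). Hence $\tfrac{d}{dt}\log\Phi(t)=\Tr(A^{-1}A')=-\Tr T=-\hat f(0)$, constant in $t$, and integrating from $0$ to $1$ yields $\det(e^{-W_+}Me^{-W_-})=e^{-\hat f(0)}\det M$, which is exactly \eqref{4_eq:form_1} after rearrangement. (If $M$ is not invertible both sides vanish and the identity is trivial, so the homotopy may be run under the harmless assumption $\det M\neq 0$.) The delicate point throughout, and the step I expect to require the most care, is precisely this trace-class bookkeeping that keeps all intermediate traces meaningful despite $W_+,W_-$ failing to be trace class.
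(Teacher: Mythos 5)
Your proof is correct, but the engine behind its main step is genuinely different from the paper's. Both arguments begin from the same triangularity input (the third bullet of Theorem \ref{4:WH_properties}), which you use to collapse the right-hand determinant to $\det(e^{-W_+}Me^{-W_-})$ on $L_2([0,1])$, with $W_\pm=\I_{[0,1]}W_{f_\pm}\I_{[0,1]}$ and $M=\I_{[0,1]}G_{e^f}\I_{[0,1]}$. From there the paper works in the opposite direction: it inserts $W_{e^{f_+}}W_{e^{-f_+}}=I$ and $W_{e^{-f_-}}W_{e^{f_-}}=I$ around $G_{e^f}$, uses triangularity to split off the factor $\det\bigl(e^{\I_{[0,1]}W_{f_-}\I_{[0,1]}}e^{\I_{[0,1]}W_{f_+}\I_{[0,1]}}\bigr)$, and evaluates this factor as $e^{\hat f(0)}$ by citing Ehrhardt's generalization of the Helton--Howe formula (Theorem \ref{B:helton_howe}) together with Mercer's theorem (Theorem \ref{B:merc_th}) for $\Tr(\I_{[0,1]}W_f\I_{[0,1]})=\hat f(0)$, the commutator of the two exponents being trace class with zero trace. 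Your homotopy $\Phi(t)=\det(e^{-tW_+}Me^{-tW_-})$, whose logarithmic derivative you show is the constant $-\Tr T$, is in effect a self-contained proof of exactly the instance of Theorem \ref{B:helton_howe} that the paper invokes as a black box: the cyclicity argument through $[A-I,A^{-1}-I]=0$ replaces Ehrhardt's theorem, and your rank-one superposition of $\I_{[0,1]}W_f\I_{[0,1]}$, giving $\norm{\I_{[0,1]}W_f\I_{[0,1]}}_{\mathcal{J}_1}\lesssim\norm{f}_{L_1}$ and $\Tr(\I_{[0,1]}W_f\I_{[0,1]})=\hat f(0)$, replaces Mercer (and avoids the positivity decomposition Mercer requires). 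The paper's route is shorter because two citations do the work, and its formulation for an arbitrary $A$ with $\I_{[0,1]}(A-I)\I_{[0,1]}\in\mathcal{J}_1$ is matched by your argument, which only uses $M-I\in\mathcal{J}_1$; what your route buys is self-containedness and explicit bookkeeping of the point --- on which the paper's determinant-splitting step also silently relies --- that only the sum $W_++W_-=\I_{[0,1]}W_f\I_{[0,1]}$ is trace class, not the individual summands. Your reduction to invertible $M$ (both sides vanish when $\det M=0$) is indeed needed for the logarithmic-derivative formula and is handled correctly.
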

\begin{proof}
Denote $G_{e^f}=A$. We will show that the lemma holds if one replaces $G_{e^f}$ by an arbitrary $A$ such that $\I_{[0, 1]}(A-I)\I_{[0, 1]}\in \mathcal{J}_1$.

We have
\begin{multline*}
		\det (\I_{[0, 1]}A\I_{[0, 1]}) = 
		\det(\I_{[0, 1]}W_{e^{f_+}}W_{e^{-f_+}}A
		W_{e^{-f_-}}W_{e^{f_-}}\I_{[0, 1]}) =\\
		= \det (\I_{[0, 1]}W_{e^{-f_+}}A
		W_{e^{-f_-}}\I_{[0, 1]})\det (e^{\I_{[0, 1]}W_{f_-}
			\I_{[0, 1]}}e^{\I_{[0, 1]}W_{f_+}\I_{[0, 1]}}),
	\end{multline*}
	where the second equality follows from the first and last assertions of Theorem \ref{4:WH_properties}.  It is remaining to show that
	\begin{equation}\label{4_eq:div_expr}
		\det (e^{\I_{[0, 1]}W_{f_-}\I_{[0, 1]}}
		e^{\I_{[0, 1]}W_{f_+}\I_{[0, 1]}}) = e^{\hat{f}(0)}.
	\end{equation}
	
	By Theorem \ref{B:merc_th} the operator 
	$\I_{[0, 1]}W_f\I_{[0, 1]}$ is trace class for 
	$f\in L_1(\R)\cap \F^* L_1(\R)$, so the left-hand 
	side of \eqref{4_eq:div_expr} is equal to
	\[
		\det \left(e^{\I_{[0, 1]}W_{f_-}\I_{[0, 1]}}
		e^{\I_{[0, 1]}W_{f_+}\I_{[0, 1]}}e^{-\I_{[0, 1]}
			W_f\I_{[0, 1]}}\right)e^{\Tr(\I_{[0, 1]}W_f\I_{[0, 1]})},
	\]
	where $\Tr(\I_{[0, 1]}W_f\I_{[0, 1]}) = 
	\hat{f}(0)$ holds due to Theorem \ref{B:merc_th}. A direct calculation shows that $\I_{[0, 1]}\F f$ is Hilbert-Schmidt for a square integrable $f$. Thereby the commutator $[\I_{[0, 1]}W_{f_-}\I_{[0, 1]}, \I_{[0,1]}W_{f_+}\I_{[0, 1]}]$ is trace class and its trace is zero. This concludes that the first factor in the above expression is $1$ by Theorem \ref{B:helton_howe} and implies that the equality \eqref{4_eq:div_expr} holds.
\end{proof}

\begin{proof}[Proof of Lemma \ref{2:form_first}]
By Lemma \ref{4:widom_formula} we have that Theorem \ref{B:JD} is applicable to the determinant in the right-hand side of the relation \eqref{4_eq:form_1}. Applying the Jacobi-Dodgeson identity, using Theorem \ref{4:WH_properties} and the formula in Lemma \ref{4:widom_formula} we have
\[
\det(\I_{[0, 1]}e^{-W_{f_+}}G_{e^f}e^{-W_{f_-}}\I_{[0, 1]}) = e^{\Tr\mathcal{R}_f + \int_0^\infty\omega\hat{f}(\omega)\hat{f}(-\omega)d\omega}\det(\I_{[1, \infty)}W_{e^{f_-}}G_{e^{-f_+}}G_{e^{-f_-}}W_{e^{f_+}}\I_{[1, \infty)}).
\]
To conclude the proof it is remaining to observe that
\[
\hat{f}(0) = \Tr (\I_{[0, 1]}W_f\I_{[0, 1]}), \quad \Tr(\I_{[0, 1]}\mathcal{R}_f\I_{[1, +\infty)})=0.
\]
\end{proof}

\section{Proof of Lemma \ref{2:diff_tr_est}}\label{sect:tr_class}

In this section we establish Lemma \ref{2:diff_tr_est}. Our approach follows \cite{BE_03, G_24}. We seperate the section into several parts. In the first part we recall preliminary statements: we formulate an estimate for integral operators of a certain form (Proposition \ref{B:norm_est}), derive a convenient estimate for hypergeometric functions (Lemma \ref{5:asymp_lemma}) and prove that $p$-Sobolev functions are H\"older (Lemma \ref{C:holder_cond}). In the second part we introduce a decomposition of the kernel $\mathcal{R}_f(x, y)$ (Lemma \ref{5:ker_expr}) and prove trace estimates for each component (Lemma \ref{5:tr_est}), which imply the second claim of Lemma \ref{2:diff_tr_est}. In the third part we show (Lemma \ref{5:01_est}) that $\I_{[0, 1]}\CT f\in\mathcal{J}_1$ for $f$ satisfying restrictions of Lemma \ref{2:diff_tr_est}, which yields that $\I_{[0, 1]}\mathcal{R}_f\in\mathcal{J}_1$. The latter, given the second claim proved, implies that $\mathcal{R}_f\in\mathcal{J}_1$. We conclude the proof in the last part.

\begin{notation_remark}
Here and in the following section we write $A_f\lesssim B_f$ for variables depending on a function $f$ if for some independent of $f$ constant $C$ we have $A_f\le CB_f$. Denote $f_0(x) = f(x)-f(0)$.
\end{notation_remark}

\subsection{Preliminaries}

The main statement we will use in the following subsection gives an estimate on trace norm of an integral operator of a certain form.

\begin{proposition}[{\cite[Proposition~7.1]{G_24}}]\label{B:norm_est}
For an integral operator on $L_2(X, d\mu_X)$ with the kernel
\[
K(x, y) = \int_Y h_1(x, t)h_2(y, t)a(t)d\mu_Y(t),
\]
where $h_1$, $h_2$ are measurable functions on $X\times Y$ we have
\[
\norm{K}_{\mathcal{J}_1(L_2(X, d\mu_X))}\le \int_Y\abs{a(t)}\left(\int_X\abs{h_1(x, t)}^2d\mu_X(x)\right)^{1/2}\left(\int_X\abs{h_2(x, t)}^2d\mu_X(x)\right)^{1/2}d\mu_Y(t)
\]
provided the right-hand side is finite.
\end{proposition}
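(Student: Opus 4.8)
The plan is to realize the kernel as a continuous superposition of rank-one operators and to apply the triangle inequality for the trace norm in its integral (Minkowski) form. For each fixed $t\in Y$ introduce the operator $K_t$ on $L_2(X, d\mu_X)$ with kernel $h_1(x, t)h_2(y, t)$, that is $(K_tg)(x) = h_1(x, t)\int_X h_2(y, t)g(y)\,d\mu_X(y)$. This is a rank-one operator, $K_t = h_1(\cdot, t)\otimes\overline{h_2(\cdot, t)}$ in the notation $(u\otimes v)g = \langle g, v\rangle u$, whose single nonzero singular value equals $\norm{h_1(\cdot, t)}_{L_2(X)}\norm{h_2(\cdot, t)}_{L_2(X)}$; hence
\[
\norm{K_t}_{\mathcal{J}_1} = \left(\int_X\abs{h_1(x, t)}^2\,d\mu_X(x)\right)^{1/2}\left(\int_X\abs{h_2(x, t)}^2\,d\mu_X(x)\right)^{1/2}.
\]
Consequently $t\mapsto a(t)K_t$ is a $\mathcal{J}_1$-valued map whose integral of norms $\int_Y\norm{a(t)K_t}_{\mathcal{J}_1}\,d\mu_Y(t)$ is precisely the right-hand side of the claimed inequality, which is finite by hypothesis.

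Next I would show that $K = \int_Y a(t)K_t\,d\mu_Y(t)$, the integral understood in the sense of Bochner in the Banach space $\mathcal{J}_1$. Finiteness of the integral of the norms secures Bochner integrability once strong measurability of $t\mapsto a(t)K_t$ is verified; the latter follows from measurability of $h_1$, $h_2$, $a$ together with separability of $\mathcal{J}_1$ via the Pettis measurability theorem. To identify the Bochner integral with the operator $K$ I would pair it against arbitrary $g, \phi\in L_2(X, d\mu_X)$: using that $A\mapsto\langle Ag, \phi\rangle$ is a bounded functional on $\mathcal{J}_1$ and then Fubini's theorem,
\[
\int_Y a(t)\langle K_tg, \phi\rangle\,d\mu_Y(t) = \int_X\int_X\left(\int_Y h_1(x, t)h_2(y, t)a(t)\,d\mu_Y(t)\right)g(y)\overline{\phi(x)}\,d\mu_X(y)\,d\mu_X(x),
\]
where the inner integral is exactly $K(x, y)$. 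Thus the Bochner integral has kernel $K(x, y)$ and therefore coincides with $K$.

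Finally, the basic inequality $\norm{\int_Y F(t)\,d\mu_Y(t)}_{\mathcal{J}_1}\le\int_Y\norm{F(t)}_{\mathcal{J}_1}\,d\mu_Y(t)$ for Bochner integrals, applied to $F(t) = a(t)K_t$, yields precisely the asserted bound on $\norm{K}_{\mathcal{J}_1}$.

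I expect the only genuine subtlety to be the measure-theoretic bookkeeping: verifying strong measurability of the $\mathcal{J}_1$-valued map $t\mapsto a(t)K_t$ and justifying the Fubini interchange that identifies the kernel of the Bochner integral with $K(x, y)$. Should one wish to bypass Bochner integration altogether, the same estimate follows from trace-norm duality: writing $\norm{K}_{\mathcal{J}_1} = \sup\{\abs{\Tr(KB)} : B\in\mathcal{B}(L_2(X)),\ \norm{B}\le 1\}$ and combining $\abs{\Tr(K_tB)}\le\norm{B}\norm{K_t}_{\mathcal{J}_1}$ with Fubini gives the bound after taking the supremum over $B$.
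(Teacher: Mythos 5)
Your proof is correct, but note there is nothing internal to compare it against: the paper does not prove this proposition, importing it verbatim from \cite[Proposition~7.1]{G_24}. Your argument --- writing $K$ as a Bochner integral in $\mathcal{J}_1$ of the rank-one operators $a(t)\,h_1(\cdot,t)\otimes\overline{h_2(\cdot,t)}$, whose trace norms are exactly $\abs{a(t)}\norm{h_1(\cdot,t)}_{L_2}\norm{h_2(\cdot,t)}_{L_2}$, and then invoking the triangle inequality for Bochner integrals --- is the standard proof of this estimate, and the measure-theoretic points you flag (Pettis measurability via separability of $\mathcal{J}_1$, Fubini justified by Cauchy--Bunyakovsky--Schwarz together with the finiteness hypothesis) are precisely the ones requiring verification; you handle them correctly. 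One minor caveat on your closing alternative: the formula $\norm{K}_{\mathcal{J}_1}=\sup\{\abs{\Tr(KB)}:\norm{B}\le 1\}$ over all bounded $B$ presupposes $K\in\mathcal{J}_1$, so as stated it is circular; to make that bypass legitimate one should take the supremum over finite-rank $B$ with $\norm{B}\le 1$ (using the duality $\mathcal{K}(\mathcal{H})^*\simeq\mathcal{J}_1$), for which $\Tr(KB)$ is defined for any bounded $K$ and finiteness of the supremum itself yields $K\in\mathcal{J}_1$.
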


Let us give a precise estimate for the kernel of the introduced operator $\CT$. Recall the definition
\[
\CT(x) = \frac{e^{-ix}}{\sqrt{2\pi}}\psi(x)\rho(x)Z_s(x)e^{i\pi\Re s \sign x}, \quad \rho(x)\psi(x)=x^{\bar{s}}e^{-\frac{i\pi}{2}\bar{s}},
\]
where the branch of power is chosen by the condition $\arg x\in [0, \pi]$. 
The asymptotic expansion of the confluent hypergeometric function as $z\to\infty$ is (see \cite[13.5.1]{AS_64}):
\begin{multline}\label{5_eq:hypergeom_asymp}
\FO{a}{b}{z}=\frac{\Gamma(b)}{\Gamma(b-a)}e^{\pm i\pi a}z^{-a}\bra*{1-\frac{a(1+a-b)}{z}+O(\abs{z}^{-2})}+\\
+\frac{\Gamma(b)}{\Gamma(a)}e^zz^{a-b}\bra*{1+\frac{(b-a)(1-a)}{z}+O(\abs{z}^{-2})},
\end{multline}
where the upper sign being taken if $\arg z\in (-\pi/2, 3\pi/2)$, the lower sign if $\arg z\in (-3\pi/2, -\pi/2]$. Further, recall the derivative formula
\begin{equation}\label{5_eq:hypergeom_der}
\frac{d}{dz}\FO{a}{b}{z} = \frac{a}{b}\FO{a+1}{b+1}{z}.
\end{equation}
Denote
\[
\mathcal{D}(x) = e^{-\frac{i\pi}{2}\bar{s}}x^{\bar{s}}Z_s(x)-1,\quad \mathfrak{D}(x) = \frac{e^{-ix}}{\sqrt{2\pi}}\mathcal{D}(x).
\]
\begin{lemma}\label{5:asymp_lemma}
We have
\begin{align*}
&\sqrt{2\pi}\abs*{\mathfrak{D}(x)} = \abs*{\mathcal{D}(x)}\lesssim \frac{\abs{x}^{\Re s}}{1+\abs{x}^{1+\Re s}},\\
&\abs*{\mathfrak{D}'(x)-\frac{\abs{s}^2e^{-ix}}{\sqrt{2\pi}x}}\lesssim \frac{\abs{x}^{\Re s-1}}{1+\abs{x}^{1+\Re s}}.
\end{align*}
\end{lemma}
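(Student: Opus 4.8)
The plan is to prove both estimates by splitting the real line into a neighbourhood of the origin, $\abs{x}\le 1$, and the outer region $\abs{x}\ge 1$, reducing in each case to the confluent hypergeometric function through the definitions. Writing $\rho(x)\psi(x)=e^{-\frac{i\pi}{2}\bar{s}}x^{\bar{s}}$, we have $\mathcal{D}(x)=e^{-\frac{i\pi}{2}\bar{s}}x^{\bar{s}}Z_s(x)-1$ with $Z_s(x)=\Gam{1+s}{1+2\Re s}\FO{\bar{s}}{1+2\Re s}{ix}$, and $\sqrt{2\pi}\,\mathfrak{D}=e^{-ix}\mathcal{D}$. Thus the first bound is a statement about $\mathcal{D}$, while for the second I would use the convenient form $\mathfrak{D}'=\frac{e^{-ix}}{\sqrt{2\pi}}(\mathcal{D}'-i\mathcal{D})$. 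Because the branch and sign conventions ($\arg x\in[0,\pi]$, the $\sign x$ factors) are symmetric under $x\mapsto -x$, I would carry out the computation for $x>0$, where $\arg(ix)=\pi/2$ forces the upper sign in \eqref{5_eq:hypergeom_asymp}, and record that the negative half-line follows identically with the lower sign.

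For $\abs{x}\le 1$ I would use that $\FO{\bar{s}}{1+2\Re s}{\cdot}$ is entire, so $Z_s$ is smooth with $Z_s(0)=\Gam{1+s}{1+2\Re s}$ and $Z_s(x)=Z_s(0)+O(\abs{x})$, its derivative being read off from \eqref{5_eq:hypergeom_der}. Since $\abs{e^{-\frac{i\pi}{2}\bar{s}}x^{\bar{s}}}$ is a constant multiple of $\abs{x}^{\Re s}$, the product rule gives $\abs{\mathcal{D}(x)}\lesssim \abs{x}^{\Re s}+1$ and, the dominant term coming from differentiating $x^{\bar{s}}$ (namely $\bar{s}x^{\bar{s}-1}Z_s(0)$), $\abs{\mathfrak{D}'(x)}\lesssim \abs{x}^{\Re s-1}+1$. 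Near the origin the right-hand sides are of order $\abs{x}^{\Re s}$ and $\abs{x}^{\Re s-1}$ respectively, so in the regime where these powers dominate the bounded remainder the estimates follow at once; the comparison of the bounded part with the stated powers, together with the behaviour of the subtracted $\abs{s}^2 e^{-ix}/(\sqrt{2\pi}x)$ term near zero, is where the sign of $\Re s$ must be tracked.

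The outer region is the crux. Substituting $a=\bar{s}$, $b=1+2\Re s$, $z=ix$ into \eqref{5_eq:hypergeom_asymp}, I would exploit the two algebraic identities $b-a=1+s$ and $a(1+a-b)=-\abs{s}^2$. The first makes the prefactor $\Gamma(b)/\Gamma(b-a)$ of the first branch cancel exactly against the normalisation $\Gam{1+s}{1+2\Re s}$ of $Z_s$, while the phases collapse to unity, $e^{-\frac{i\pi}{2}\bar{s}}x^{\bar{s}}\cdot e^{i\pi\bar{s}}(ix)^{-\bar{s}}=1$ for $x>0$. Hence the first branch contributes $1+\frac{\abs{s}^2}{ix}+O(\abs{x}^{-2})$ to $e^{-\frac{i\pi}{2}\bar{s}}x^{\bar{s}}Z_s(x)$, so that the leading term of $\mathcal{D}$ is the explicit $\frac{\abs{s}^2}{ix}$, already giving $\abs{\mathcal{D}(x)}\lesssim\abs{x}^{-1}$, which is the size of $\frac{\abs{x}^{\Re s}}{1+\abs{x}^{1+\Re s}}$ for $\abs{x}\ge 1$. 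For the derivative, the form $\mathfrak{D}'=\frac{e^{-ix}}{\sqrt{2\pi}}(\mathcal{D}'-i\mathcal{D})$ is decisive: the operator $\frac{d}{dx}-i$ lowers the order of any $e^{ix}x^{\alpha}$ by one power, so the entire second ($e^{z}$) branch of \eqref{5_eq:hypergeom_asymp}, which in $\mathcal{D}$ is of the form $e^{ix}x^{-1-2i\Im s}$ up to a constant, is pushed down to $O(\abs{x}^{-2})$; applying the same operator to the first-branch term $\frac{\abs{s}^2}{ix}$ yields $-\frac{\abs{s}^2}{x}+O(\abs{x}^{-2})$. Therefore $\mathcal{D}'-i\mathcal{D}=-\frac{\abs{s}^2}{x}+O(\abs{x}^{-2})$ and $\mathfrak{D}'$ equals, up to sign, the subtracted term $\frac{\abs{s}^2 e^{-ix}}{\sqrt{2\pi}x}$ plus a remainder of size $\abs{x}^{-2}$, which is exactly the order of $\frac{\abs{x}^{\Re s-1}}{1+\abs{x}^{1+\Re s}}$ for $\abs{x}\ge 1$.

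I expect the main obstacle to be the phase and branch-cut bookkeeping that underlies these cancellations: checking that the leading contribution collapses to exactly $1$ (not merely to a unimodular constant), that the $\pm$ choice in \eqref{5_eq:hypergeom_asymp} is consistent with $\arg(ix)$ on each half-line, and that the $e^{z}$ branch really drops two orders under $\frac{d}{dx}-i$ so that $\frac{\abs{s}^2 e^{-ix}}{\sqrt{2\pi}x}$ captures the full $\abs{x}^{-1}$ content of $\mathfrak{D}'$ (this is where the coefficient $\abs{s}^2$, and its sign, are pinned down). A secondary technical point is to make the implied constants in the $O(\cdot)$ remainders of \eqref{5_eq:hypergeom_asymp} uniform down to $\abs{x}=1$, so that the inner and outer estimates patch into the single global bounds stated.
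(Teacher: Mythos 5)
Your outer-region analysis ($\abs{x}\ge 1$) follows exactly the paper's route: substitute \eqref{5_eq:hypergeom_asymp} with $a=\bar{s}$, $b=1+2\Re s$, $z=ix$, use $1+a-b=-s$ (hence $a(1+a-b)=-\abs{s}^2$) and the collapse of $e^{-\frac{i\pi}{2}\bar{s}}x^{\bar{s}}\cdot e^{\pm i\pi\bar{s}}(ix)^{-\bar{s}}$ to $1$, then watch the oscillatory contributions cancel in $\mathcal{D}'-i\mathcal{D}$. The one methodological gap is your treatment of $\mathcal{D}'$: you differentiate the asymptotic expansion of $\mathcal{D}$ term by term (the ``$\frac{d}{dx}-i$ lowers the order'' step), and asymptotic expansions cannot in general be differentiated. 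The paper avoids this: it writes $\mathcal{D}'(x)=\bar{s}\frac{1+\mathcal{D}(x)}{x}+e^{-\frac{i\pi}{2}\bar{s}}x^{\bar{s}}Z_s'(x)$, uses \eqref{5_eq:hypergeom_der} to express $Z_s'$ through $\FO{1+\bar{s}}{2+2\Re s}{ix}$, and applies \eqref{5_eq:hypergeom_asymp} a second time together with $z\Gamma(z)=\Gamma(z+1)$; the cancellation of the $e^{ix}\abs{x}^{-1}$ terms then emerges from this second expansion rather than from formal differentiation. You should replace your differentiation step by that argument.

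More importantly, your hedge ``up to sign'' cannot be left unresolved --- and in fact your sign is the correct one, which exposes an error in the statement. The expansion gives $1-\frac{a(1+a-b)}{z}=1+\frac{\abs{s}^2}{ix}$, so, as you computed, $\mathcal{D}(x)=\frac{\abs{s}^2}{ix}+\mathrm{osc}+O(\abs{x}^{-2})$ and consequently $\mathfrak{D}'(x)=-\frac{\abs{s}^2e^{-ix}}{\sqrt{2\pi}x}+O(\abs{x}^{-2})$, the \emph{opposite} sign to the term subtracted in the lemma. A check at $s=1$, where $\FO{1}{3}{z}=2z^{-2}(e^z-1-z)$ gives $\mathcal{D}(x)=i(e^{ix}-1)/x$ exactly, confirms this: $\mathfrak{D}'(x)+\frac{e^{-ix}}{\sqrt{2\pi}x}=O(\abs{x}^{-2})$, whereas $\mathfrak{D}'(x)-\frac{e^{-ix}}{\sqrt{2\pi}x}$ is of order $\abs{x}^{-1}$. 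The paper's own proof writes $1-\frac{\abs{s}^2}{ix}$ at the substitution step where \eqref{5_eq:hypergeom_asymp} yields $1+\frac{\abs{s}^2}{ix}$; this sign slip propagates into \eqref{5_eq:D_asymp} and into the lemma, so the second estimate as printed is not provable, and a correct proof is precisely your outline with your sign, made rigorous via the derivative formula. A similar caveat applies to the inner region, which you rightly flagged but could not close: for $\Re s>0$ one has $\mathcal{D}(x)\to -1$ as $x\to 0$ while the stated bound vanishes (again visible at $s=1$), so the first estimate fails near the origin as well; no tracking of the sign of $\Re s$ can save it, and the bound would have to be weakened there (e.g.\ numerator $1+\abs{x}^{\Re s}$), which is harmless for the uses made of the lemma later in the paper. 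The paper itself silently proves only the $\abs{x}\to\infty$ asymptotics and never addresses $\abs{x}\le 1$.
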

\begin{proof}
The lemma follows directly from the asymptotic expansion \eqref{5_eq:hypergeom_asymp} and the derivative formula \eqref{5_eq:hypergeom_der}. We present the calculations for convenience of the reader.

Substituting the expansion \eqref{5_eq:hypergeom_asymp} into the formula for $Z_s(x)$ we have as $x\to +\infty$
\begin{multline*}
Z_s(x) = \Gam{1+s}{1+2\Re s}\FO{\bar{s}}{1+2\Re s}{ix}=e^{i\pi\bar{s}}\abs{x}^{-\bar{s}}e^{-\frac{i\pi}{2}\bar{s}}\bra*{1-\frac{\abs{s}^2}{ix}+O(\abs{x}^{-2})} +\\
+\Gam{1+s}{\bar{s}}e^{ix}\abs{x}^{-1-s}e^{-(1+s)\frac{i\pi}{2}}\bra*{1+O(\abs{x}^{-1})}.
\end{multline*}
As $x\to -\infty$ we similarly have
\begin{multline*}
Z_s(x) = \Gam{1+s}{1+2\Re s}\FO{\bar{s}}{1+2\Re s}{ix}=e^{-i\pi\bar{s}}\abs{x}^{-\bar{s}}e^{\frac{i\pi}{2}\bar{s}}\bra*{1-\frac{\abs{s}^2}{ix}+O(\abs{x}^{-2})} +\\
+\Gam{1+s}{\bar{s}}e^{-ix}\abs{x}^{-1-s}e^{(1+s)\frac{i\pi}{2}}\bra*{1+O(\abs{x}^{-1})}.
\end{multline*}
We note that $e^{-\frac{i\pi}{2}\bar{s}}x^{\bar{s}} = e^{-\frac{i\pi}{2}\bar{s}\sign x}\abs{x}^{\bar{s}}$ for $\arg x \in [0, \pi]$. Substituting the above expansions for $Z_s(x)$ into the definition of $\mathcal{D}(x)$ we conclude that
\begin{equation}\label{5_eq:D_asymp}
\mathcal{D}(x) = i\frac{\abs{s}^2}{x} +\Gam{1+s}{\bar{s}}e^{ix}e^{-\frac{i\pi}{2}(1+2\Re s)\sign x}\abs{x}^{-1-2i\Im s}+ O(\abs{x}^{-2}),
\end{equation}
as $\abs{x}\to \infty$. This yields the first inequality.

Express the derivative as follows
\begin{equation}\label{5_eq:5_1_der}
\mathfrak{D}'(x)=\bra*{\frac{e^{-ix}}{\sqrt{2\pi}}\mathcal{D}(x)}' = -i\frac{e^{-ix}}{\sqrt{2\pi}}\mathcal{D}(x)+\frac{e^{-ix}}{\sqrt{2\pi}}\mathcal{D}'(x),
\end{equation}
where by virtue of the equality \eqref{5_eq:D_asymp} we have the following asymptotic for the first term
\begin{equation}\label{5_eq:5_1_1term}
-i\frac{e^{-ix}}{\sqrt{2\pi}}\mathcal{D}(x) = \frac{\abs{s}^2e^{-ix}}{\sqrt{2\pi}x} - \frac{i}{\sqrt{2\pi}}\Gam{1+s}{\bar{s}}e^{-\frac{i\pi}{2}(1+2\Re s)\sign x}\abs{x}^{-1-2i\Im s} + O(\abs{x}^{-2}).
\end{equation}

We proceed to the asymptotic for the second term in the expression \eqref{5_eq:5_1_der}. We have
\[
\mathcal{D}'(x) = \bar{s}\frac{e^{-\frac{i\pi}{2}\bar{s}}x^{\bar{s}}Z_s(x)}{x}+e^{-\frac{i\pi}{2}\bar{s}}x^{\bar{s}}Z_s'(x)=\bar{s}\frac{1+\mathcal{D}(x)}{x}+e^{-\frac{i\pi}{2}\bar{s}}x^{\bar{s}}Z_s'(x),
\]
where the first term is asymptotically $\bar{s}/x+O(\abs{x}^{-2})$ by the formula \eqref{5_eq:D_asymp}.
Applying the derivative formula $\eqref{5_eq:hypergeom_der}$ to the definition of $Z_s(x)$ we get
\[
Z_s'(x) =\Gam{1+s}{1+2\Re s}\frac{d}{dx}\left(\FO{\bar{s}}{1+2\Re s}{ix}\right) = i\Gam{1+s}{1+2\Re s}\frac{\bar{s}}{1+2\Re s}\FO{1+\bar{s}}{2+2\Re s}{ix} 
\]
Next we substitute the asymptotic expansion \eqref{5_eq:hypergeom_asymp} into the above formula to deduce that
\begin{multline}\label{5_eq:der_Z_asympt}
Z_s'(x) = i\frac{\bar{s}}{1+2\Re s}\Gam{1+s}{1+2\Re s}\bigg(\Gam{2+2\Re s}{1+s}e^{\frac{i\pi}{2}(1+\bar{s})\sign x}\abs{x}^{-1-\bar{s}}(1+O(\abs{x}^{-1}))+\\
+i\frac{\bar{s}}{1+2\Re s}\Gam{2+2\Re s}{1+\bar{s}}e^{ix}e^{-\frac{i\pi}{2}(1+s)\sign x}\abs{x}^{-1-s}(1+O(\abs{x}^{-1}))\bigg), \quad \text{ as }\abs{x}\to\infty.
\end{multline}
The property $z\Gamma(z) = \Gamma(z+1)$ implies that
\begin{equation}\label{5_eq:gam_ident}
\begin{aligned}
&\frac{1}{1+2\Re s}\Gam{1+s}{1+2\Re s}\Gam{2+2\Re s}{1+s} = \frac{\Gamma(2+2\Re s)}{(1+2\Re s)\Gamma(1+2\Re s)}=1,\\
&\frac{\bar{s}}{1+2\Re s}\Gam{1+s, 2+2\Re s}{1+\bar{s}, 1+2\Re s} =\frac{\bar{s}\Gamma(\bar{s})\Gamma(1+s)\Gamma(2+2\Re s)}{\Gamma(\bar{s})\Gamma(1+\bar{s})(1+2\Re s)\Gamma(1+2\Re s)}= \Gam{1+s}{\bar{s}}.
\end{aligned}
\end{equation}
We again note that $e^{\frac{i\pi}{2}(1+\bar{s})\sign x}\abs{x}^{-1-\bar{s}} = e^{\frac{i\pi}{2}(1+\bar{s})}x^{-1-\bar{s}}$ and $e^{-\frac{i\pi}{2}\bar{s}}x^{\bar{s}} = e^{-\frac{i\pi}{2}\bar{s}\sign x}\abs{x}^{\bar{s}}$ for $\arg x \in [0, \pi]$. Formulae \eqref{5_eq:der_Z_asympt}, \eqref{5_eq:gam_ident} yield
\[
e^{-\frac{i\pi}{2}\bar{s}}x^{\bar{s}}Z_s'(x) = -\frac{\bar{s}}{x}(1+O(\abs{x}^{-1})) + i\Gam{1+s}{\bar{s}}e^{ix}e^{-\frac{i\pi}{2}(1+s+\bar{s})\sign x}\abs{x}^{1-s+\bar{s}}(1+O(\abs{x}^{-1})),
\]
as $\abs{x}\to\infty$. We conclude that
\begin{multline}\label{5_eq:5_1_2term}
\mathcal{D}'(x) = \bar{s}\frac{1+\mathcal{D}(x)}{x} + e^{-\frac{i\pi}{2}\bar{s}}x^{\bar{s}}Z_s'(x) =\frac{\bar{s}}{x} - \frac{\bar{s}}{x}+\\
+ i\Gam{1+s}{\bar{s}}e^{ix}e^{-\frac{i\pi}{2}(1+2\Re s)\sign x}\abs{x}^{-1-2i\Im s} + O(\abs{x}^{-2}),\quad \abs{x}\to\infty,
\end{multline}
where, as was noted above, the asymptotic for the first term follows from the expression \eqref{5_eq:D_asymp}.

Finally, substituting formulae \eqref{5_eq:5_1_1term}, \eqref{5_eq:5_1_2term} into the identity \eqref{5_eq:5_1_der} we conclude
\begin{multline*}
\bra*{\frac{e^{-ix}}{\sqrt{2\pi}}\mathcal{D}(x)}' = \frac{\abs{s}^2e^{-ix}}{\sqrt{2\pi}x} - \frac{i}{\sqrt{2\pi}}\Gam{1+s}{\bar{s}}e^{-\frac{i\pi}{2}(1+2\Re s)\sign x}\abs{x}^{-1-2i\Im s} +\\
+\frac{i}{\sqrt{2\pi}}\Gam{1+s}{\bar{s}}e^{-\frac{i\pi}{2}(1+2\Re s)\sign x}\abs{x}^{-1-2i\Im s}+O(\abs{x}^{-2}) = \frac{\abs{s}^2e^{-ix}}{\sqrt{2\pi}x} + O(\abs{x}^{-2}), \quad \abs{x}\to\infty.
\end{multline*}
This finishes the proof of Lemma \ref{5:asymp_lemma}.
\end{proof}

Last, we mention that $p$-Sobolev regular functions are H\"older.
\begin{lemma}\label{C:holder_cond}
For any $p\in (1/2, 3/2)$ and any $x, y\in\R$ we have that
\[
\abs{f(x)-f(y)}\lesssim \normH{p}{f}\abs{x-y}^{p-1/2}.
\]
\end{lemma}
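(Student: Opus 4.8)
The plan is to prove this as a standard homogeneous Sobolev embedding via the Fourier inversion formula and the Cauchy--Bunyakovsky--Schwarz inequality, with the range $p\in(1/2,3/2)$ emerging precisely as the condition for an explicit scalar integral to converge. First I would record, using the paper's convention $\hat f(\omega)=\frac{1}{2\pi}\int_\R e^{-i\omega x}f(x)\,dx$ and the associated inversion $f(x)=\int_\R\hat f(\lambda)e^{i\lambda x}\,d\lambda$, the identity $f(x)-f(y)=\int_\R\hat f(\lambda)(e^{i\lambda x}-e^{i\lambda y})\,d\lambda$. This is immediate for Schwartz $f$, and the general case will follow by density once the estimate, which involves only $\normH{p}{f}$, is established. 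The elementary pointwise bound $\abs{e^{i\lambda x}-e^{i\lambda y}}=\abs{e^{i\lambda(x-y)}-1}\le\min(2,\abs{\lambda}\abs{x-y})$ then yields
\[
\abs{f(x)-f(y)}\le\int_\R\abs{\hat f(\lambda)}\min(2,\abs{\lambda}\abs{x-y})\,d\lambda.
\]

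Next I would split off the weight by writing $\abs{\hat f(\lambda)}=\abs{\lambda}^{-p}\cdot\abs{\lambda}^{p}\abs{\hat f(\lambda)}$ and applying Cauchy--Bunyakovsky--Schwarz, so that the right-hand side is bounded by $I(x,y)^{1/2}\,\normH{p}{f}$, where
\[
I(x,y)=\int_\R\abs{\lambda}^{-2p}\min(2,\abs{\lambda}\abs{x-y})^2\,d\lambda.
\]
The core of the argument is then to evaluate $I$. Writing $h=\abs{x-y}$ and splitting the integral at $\abs{\lambda}=2/h$, the inner region $\abs{\lambda}<2/h$ contributes $h^2\int_{\abs{\lambda}<2/h}\abs{\lambda}^{2-2p}\,d\lambda$, which is integrable at the origin exactly when $p<3/2$, while the outer region $\abs{\lambda}>2/h$ contributes $4\int_{\abs{\lambda}>2/h}\abs{\lambda}^{-2p}\,d\lambda$, which is integrable at infinity exactly when $p>1/2$. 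By homogeneity both pieces scale as $h^{2p-1}$, so $I(x,y)=C_p\abs{x-y}^{2p-1}$ with $C_p<\infty$ for $p\in(1/2,3/2)$. Taking square roots gives $\abs{f(x)-f(y)}\le C_p^{1/2}\normH{p}{f}\abs{x-y}^{p-1/2}$, which is exactly the claimed estimate with an $f$-independent constant.

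There is no serious obstacle here: the only point requiring care is the bookkeeping that both ends of the $\lambda$-integral are integrable precisely on the stated interval $(1/2,3/2)$ --- the lower bound $p>1/2$ controlling the high frequencies and the upper bound $p<3/2$ the low frequencies --- and that the $h$-dependence factors out cleanly by scaling. The passage from Schwartz functions to general $f$ is routine, since the displayed inequality is preserved under the relevant $L_2$-limits and exhibits the continuous Hölder representative of $f$; in particular the Cauchy--Bunyakovsky--Schwarz bound $\int_\R\abs{\hat f(\lambda)}\min(2,\abs{\lambda}h)\,d\lambda\le I(x,y)^{1/2}\normH{p}{f}<\infty$ guarantees absolute convergence of all integrals involved.
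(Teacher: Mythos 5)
Your proposal is correct and takes essentially the same approach as the paper: Fourier inversion, then Cauchy--Bunyakovsky--Schwarz with the weight $\abs{\lambda}^p$ split off, then a scaling computation showing the remaining scalar factor equals a finite constant times $\abs{x-y}^{p-1/2}$ exactly when $p\in(1/2,3/2)$. The only cosmetic difference is that you bound $\abs{e^{i\lambda(x-y)}-1}$ by $\min(2,\abs{\lambda}\abs{x-y})$ and split the integral at $\abs{\lambda}=2/\abs{x-y}$, while the paper keeps this factor intact and evaluates its weighted $L_2$ norm by a change of variable.
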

\begin{proof}
The inequality follows directly from the estimate for the Fourier transform
\[
\abs{f(x)-f(y)} \le \int_\R\abs{\omega}^p\abs*{\hat{f}(\omega)}\abs*{\frac{e^{i\omega(x-y)}-1}{\abs{\omega}^p}}d\omega\le \normH{p}{f}\norm*{\frac{e^{i\omega(x-y)}-1}{\abs{\omega}^p}}_{L_2},
\]
where the second inequality is the Cauchy-Bunyakovsky-Schwarz inequality. The estimate by $\abs{x-y}^{p-1/2}$ follows from the change of variable. The condition $p\in (1/2, 3/2)$ ensures that the second factor on the right-hand side is finite.
\end{proof}

\subsection{Decomposition of $\mathcal{R}_f$ and trace norm estimates of the components}

Observe that it is sufficient to establish the inequality in Lemma \ref{2:diff_tr_est} for compactly supported functions. It may be extended then to all $H_2(\R)$ functions by continuity. Throughout this part we will follow this assumption.

The decomposition of the kernel $\mathcal{R}_f$ below represents it as a sum of three kernels, to which Proposition \ref{B:norm_est} is applicable. The proof is based on a direct substitution of the formula for $\mathfrak{D}$ instead of the hypergeometric functions in the kernel $\mathcal{R}_f$.

\begin{lemma}\label{5:ker_expr}
For a compactly supported $f\in H_2(\R)$ the operator $\I_{[1, +\infty)}\mathcal{R}_f\I_{[1, +\infty)}$ is an integral operator with the kernel equal to
\[
\tilde{\mathcal{R}}_f(x, y) = S_f(x, y) - T_f(x, y)-\overline{T_{\bar{f}}(y, x)}+Z_f(x, y),
\]
where
\begin{align*}
&S_f(x, y) = \int_{-\infty}^\infty \overline{\mathfrak{D}(yt)}\mathfrak{D}(xt)f_0(t)dt,\\
&T_f(x, y) = \frac{i}{\sqrt{2\pi}}\int_\R\overline{\mathfrak{D}(yt)}\frac{e^{-ixt}}{x}f'(t)dt + \frac{i}{\sqrt{2\pi}}\int_\R y\left[\overline{\mathfrak{D}'(yt)}-\abs{s}^2\frac{e^{iyt}}{yt}\right]\frac{e^{-ixt}}{x}f_0(t)dt,\\
&Z_f(x, y) = \frac{\abs{s}^2}{\sqrt{2\pi}}\int_\R\frac{e^{i(y-x)t}}{xy}\frac{d}{dt}\left(\frac{f_0(t)}{t}\right)dt.
\end{align*}
\end{lemma}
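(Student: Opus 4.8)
The plan is to substitute the asymptotic decomposition of the kernel $\CT$ into $\mathcal{R}_f = \CO{f} - W_f$ and then reorganize the resulting integral by integration by parts. The starting point is the algebraic identity, obtained from $\psi(x)\rho(x) = x^{\bar s}e^{-\frac{i\pi}{2}\bar s}$ and the definitions of $\mathcal{D}$ and $\mathfrak{D}$,
\[
\CT(x) = \left(\mathfrak{D}(x) + \frac{e^{-ix}}{\sqrt{2\pi}}\right)e^{i\pi\Re s\sign x},
\]
which exhibits $\CT$ as the Fourier kernel $\frac{e^{-ix}}{\sqrt{2\pi}}$ perturbed by $\mathfrak{D}$, up to a unimodular phase. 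The first step is to reduce to $f_0$: since $f\mapsto \CO{f}$ and $f\mapsto W_f$ are linear and $\CT,\F$ are unitary (Theorem \ref{3:Diagonalization}), the constant function gives $\CO{1} = \I_+ = W_1$, hence $\mathcal{R}_1 = 0$ and $\mathcal{R}_f = \mathcal{R}_{f_0}$. Working with $f_0 = f-f(0)$ in place of $f$ is exactly what makes the integrals below converge at the origin.

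Restricting to $x,y\ge 1$ and using that $\sign(xt)=\sign t$ for $x>0$ while $\Re s$ is real, the phases $e^{\pm i\pi\Re s\sign t}$ cancel in the product $\CT(xt)\overline{\CT(yt)}$, so on $[1,\infty)^2$ the operator $\mathcal{R}_f$ is integral with kernel
\[
\int_\R\left[\mathfrak{D}(xt)\overline{\mathfrak{D}(yt)} + \mathfrak{D}(xt)\frac{e^{iyt}}{\sqrt{2\pi}} + \frac{e^{-ixt}}{\sqrt{2\pi}}\overline{\mathfrak{D}(yt)}\right]f_0(t)\,dt,
\]
the product $\frac{e^{-ixt}}{\sqrt{2\pi}}\cdot\frac{e^{iyt}}{\sqrt{2\pi}}$ having cancelled against the Wiener-Hopf kernel of $W_f$. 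The first summand is already $S_f$. For the third summand I would integrate by parts in $t$, writing $e^{-ixt}=\frac{1}{-ix}\frac{d}{dt}e^{-ixt}$, so that the derivative falls on $\overline{\mathfrak{D}(yt)}f_0(t)$, producing one term carrying $f'$ and one carrying $\overline{\mathfrak{D}'(yt)}$. Subtracting and re-adding the leading singular part $\abs{s}^2\frac{e^{iyt}}{yt}$ of $\overline{\mathfrak{D}'(yt)}$ identified in Lemma \ref{5:asymp_lemma} splits this summand into precisely $-T_f(x,y)$ plus a remainder $-\frac{i\abs{s}^2}{\sqrt{2\pi}\,x}\int_\R \frac{e^{i(y-x)t}}{t}f_0(t)\,dt$.

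The middle summand is the image of the third under $x\leftrightarrow y$, $f\mapsto\bar f$, and complex conjugation, so it equals $-\overline{T_{\bar f}(y,x)}$ plus the conjugate remainder $\frac{i\abs{s}^2}{\sqrt{2\pi}\,y}\int_\R \frac{e^{i(y-x)t}}{t}f_0(t)\,dt$. Adding the two remainders produces $\frac{i\abs{s}^2(x-y)}{\sqrt{2\pi}\,xy}\int_\R e^{i(y-x)t}\frac{f_0(t)}{t}\,dt$, which one final integration by parts, moving $\frac{d}{dt}$ off $\frac{f_0(t)}{t}$ onto the exponential, converts into $Z_f(x,y)$. Collecting the pieces yields $\tilde{\mathcal{R}}_f = S_f - T_f - \overline{T_{\bar f}(y,x)} + Z_f$.

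The main obstacle is the rigorous justification of these integrations by parts: one must verify absolute convergence of every integral and the vanishing of all boundary terms, both at infinity (immediate from the compact support of $f$) and at the origin, where $\mathfrak{D}$ and $1/t$ are singular. Here I would combine the near-zero bounds $\abs{\mathfrak{D}(x)}\lesssim \abs{x}^{\Re s}$ and $\abs{\mathfrak{D}'(x) - \abs{s}^2\frac{e^{-ix}}{\sqrt{2\pi}\,x}}\lesssim\abs{x}^{\Re s-1}$ from Lemma \ref{5:asymp_lemma} with the fact that an $H_2(\R)$ function is $C^1$, so that $f_0(t)=O(\abs{t})$ and $\frac{f_0(t)}{t}$ extends continuously across $0$; since $\Re s>-1/2$ this forces each boundary contribution to vanish and each integrand to be absolutely integrable. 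The phase cancellation and the exact matching of the $\abs{s}^2$ counterterm with the leading asymptotics of $\mathfrak{D}'$ are the two bookkeeping points requiring the most care.
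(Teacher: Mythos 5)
Your algebraic path is exactly the paper's: the same reduction $\mathcal{R}_f=\mathcal{R}_{f_0}$ via $\mathcal{R}_{\mathrm{const}}=0$, the same splitting of $\CT$ into Fourier kernel plus $\mathfrak{D}$ with phase cancellation on $[1,+\infty)^2$, the same integration by parts with the $\abs{s}^2e^{iyt}/(yt)$ counterterm, and the same recombination of the two remainders into $Z_f$. The gap is in your justification scheme. You propose to ``verify absolute convergence of every integral'' and assert that boundary terms at infinity vanish ``immediate from the compact support of $f$''. But $f_0=f-f(0)$ is \emph{not} compactly supported --- it tends to $-f(0)$ at infinity --- and this breaks the plan at two points. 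First, your opening step, writing $\I_{[1,+\infty)}\mathcal{R}_{f_0}\I_{[1,+\infty)}$ as an integral operator with kernel $\int_\R(\cdots)f_0(t)\,dt$, is not automatic: the kernel formula for $\mathcal{R}_g$ is available for bounded compactly supported $g$, while $G_{f_0}$ and $W_{f_0}$ are defined only abstractly as bounded operators, so the representation must be transferred by a limiting argument. Second, the intermediate integrals are not absolutely convergent: for instance $L(x,y)=\int_\R\overline{\mathfrak{D}(yt)}\,e^{-ixt}f_0(t)\,dt$ has integrand of size comparable to $1/\abs{t}$ at infinity, since Lemma \ref{5:asymp_lemma} gives $\abs{\mathfrak{D}(yt)}\lesssim 1/\abs{t}$ and no better, while $f_0(t)\to -f(0)\ne 0$; such integrals exist only conditionally, through oscillation. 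Consequently the boundary contributions at infinity are not controlled by compact support but by the decay of $\mathfrak{D}$ against the constant tail of $f_0$, and your proposed verification would fail as stated.

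The paper supplies precisely the missing regularization: it replaces $f_0$ by the truncation $f_0\I_{[-R,R]}$, observes that $\mathcal{R}_{f_0}$ is the weak operator limit of $\mathcal{R}_{f_0\I_{[-R,R]}}$ as $R\to\infty$, performs your decomposition and both integrations by parts at the truncated level (where every integral is over a compact set and the boundary terms sit at $t=\pm R$), and then shows the truncated kernels converge locally uniformly on $(0,+\infty)^2$ to $\tilde{\mathcal{R}}_f$, the explicit boundary terms such as $\frac{i}{\sqrt{2\pi}x}\overline{\mathfrak{D}(yR)}f_0(R)e^{-ixR}$ vanishing because $\mathfrak{D}(yR)=O(1/R)$ while $f_0(\pm R)$ stays bounded; pairing with $\I_A\otimes\I_B$ for compacts $A,B\subset[1,+\infty)$ then identifies the kernel. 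Your write-up needs this truncation-and-weak-limit layer (or an equivalent interpretation of all integrals as limits of $\int_{-R}^{R}$) to be correct. A small additional slip: in the final step the derivative is moved \emph{from} the exponential \emph{onto} $f_0(t)/t$ (not the other way around), since $Z_f$ carries $\frac{d}{dt}\bigl(f_0(t)/t\bigr)$.
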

\begin{proof}
For a compactly supported bounded Borel $f$ we have that $\mathcal{R}_f$ is an integral operator with the kernel
\begin{equation}\label{5_eq:R_ker}
\mathcal{R}_f(x, y) = \int_\R\bra*{\CT(xt)\overline{\CT(yt)}-\frac{e^{i(y-x)t}}{\sqrt{2\pi}}}f(t)dt.
\end{equation}
It is therefore sufficient to establish that for any compact $A, B\subset [1, +\infty)$ we have
\[
\int_{A\times B}\bra*{\mathcal{R}_f(x, y)-\tilde{\mathcal{R}}_f(x, y)}dxdy=0.
\]

To show the equality we consider $\mathcal{R}_{f_0}$ instead of $\mathcal{R}_f$. Observe that these operators are equal, though we may not immediately write the kernel of the former as in the formula \eqref{5_eq:R_ker}. However, the fact that $\mathcal{R}_{f_0}$ is the weak limit of $\mathcal{R}_{f_0\I_{[-R, R]}}$ as $R\to\infty$ holds. This allows us to express
\begin{multline*}
\int_{A\times B}\mathcal{R}_f(x, y)dxdy = \langle \I_A, \mathcal{R}_f\I_b\rangle = \langle \I_A, \mathcal{R}_{f_0}\I_B\rangle = \lim_{R\to\infty}\langle \I_A, \mathcal{R}_{f_0\I_{[-R, R]}}\I_B\rangle = \\
= \lim_{R\to\infty}\int_{A\times B}\mathcal{R}_{f_0\I_{[-R, R]}}(x, y)dxdy.
\end{multline*}
We conclude that it is sufficient to show that
\begin{equation}\label{5_eq:ker_lem_1}
\lim_{R\to\infty}\int_{A\times B}\mathcal{R}_{f_0\I_{[-R, R]}}(x, y)dxdy = \int_{A\times B}\tilde{\mathcal{R}}_f(x, y)dxdy.
\end{equation}

The proof of equality consists of decomposition of the kernel $\mathcal{R}_-$ into three terms and their integration by parts. Express it as follows
\[
\mathcal{R}_{f_0\I_{[-R, R]}}(x, y) = S_{f_0\I_{[-R, R]}}(x, y) + L_{f_0\I_{[-R, R]}}(x, y) + \overline{L_{\bar{f}\I_{[-R, R]}}(y, x)},
\]
where
\[
L_f(x, y) = \int_\R\overline{\mathfrak{D}(yt)}\frac{e^{-ixt}}{\sqrt{2\pi}}f_0(t)dt.
\]
Integration by parts gives that
\[
L_f(x, y) = i\int_{-R}^R\overline{\mathfrak{D}(yt)}f_0(t)\frac{d}{dt}\frac{e^{-ixt}}{\sqrt{2\pi}x}dt 
=
-\tilde{T}_{f, R}-\frac{i\abs{s}^2}{\sqrt{2\pi}}\int_{-R}^R\frac{ye^{i(y-x)t}}{xyt}f_0(t)dt,
\]
where
\begin{multline*}
\tilde{T}_{f, R} = \frac{i}{\sqrt{2\pi}}\int_{-R}^R\overline{\mathfrak{D}(yt)}\frac{e^{-ixt}}{x}f'(t)dt + \frac{i}{\sqrt{2\pi}}\int_{-R}^R y\left[\overline{\mathfrak{D}'(yt)}-\abs{s}^2\frac{e^{iyt}}{yt}\right]\frac{e^{-ixt}}{x}f_0(t)dt-\\
-\frac{i}{\sqrt{2\pi}x}\bra*{\overline{\mathfrak{D}(yR)}f_0(R)e^{-ixR}-\overline{\mathfrak{D}(-yR)}f_0(-R)e^{ixR}}.
\end{multline*}
We note that though the function $\mathfrak{D}$ is not absolutely continuous, the function $\mathfrak{D}(x)f_0(x)$ is, which justifies the integration by parts.

Observe that the sum of the last kernel in the expression for $L_f$ with its conjugate gives
\[
-\frac{i\abs{s}^2}{\sqrt{2\pi}}\int_{-R}^R\frac{ye^{i(y-x)t}}{xyt}f_0(t)dt + \frac{i\abs{s}^2}{\sqrt{2\pi}}\int_{-R}^R\frac{xe^{i(y-x)t}}{xyt}f_0(t)dt = -\frac{\abs{s}^2}{\sqrt{2\pi}xy}\int_{-R}^R\frac{f_0(t)}{t}\frac{d}{dt}e^{i(y-x)t}dt=\tilde{Z}_{f, R},
\]
where by integration by parts the right-hand side is equal to
\[
\tilde{Z}_{f, R} = -\frac{\abs{s}^2}{Rxy\sqrt{2\pi}}\bra*{f_0(R)e^{i(y-x)R}+f_0(-R)e^{-i(y-x)R}} + \frac{\abs{s}^2}{xy\sqrt{2\pi}}\int_{-R}^Re^{i(y-x)t}\frac{d}{dt}\bra*{\frac{f_0(t)}{t}}dt.
\]

We conclude that
\[
\mathcal{R}_{f_0\I_{[-R, R]}}(x, y) = S_{f_0\I_{[-R, R]}}(x, y) - \tilde{T}_{f, R}(x, y) - \overline{\tilde{T}_{\bar{f}, R}(x, y)} + \tilde{Z}_{f, R}.
\]
The equality \eqref{5_eq:ker_lem_1} follows from the convergence
\begin{align*}
&\int_{A\times B}S_{f_0\I_{[-R, R]}}(x, y)dxdy \to \int_{A\times B}S_{f_0}(x, y)dxdy,\\
&\int_{A\times B}\tilde{T}_{f, R}(x, y)dxdy \to \int_{A\times B}T_f(x, y)dxdy,\\
&\int_{A\times B}\tilde{Z}_{f, R}(x, y)dxdy \to \int_{A\times B}Z_f(x, y)dxdy,
\end{align*}
as $R\to\infty$. Lemma \ref{5:asymp_lemma} and the assumption on the function $f$ yield locally uniform on $(x, y)\in (0, +\infty)^2$ convergence of these kernels. This finishes the proof of Lemma \ref{5:ker_expr}.
\end{proof}

Before diving into further calculations let us note that
\[
\normH{p_1}{f}\lesssim \normH{p_2}{f}+\normH{p_3}{f}
\]
for any $p_2\le p_1\le p_3$.

A direct application of Proposition \ref{B:norm_est} to the kernels in Lemma \ref{5:ker_expr} gives.

\begin{lemma}\label{5:tr_est}
We have
\begin{align*}
&\norm{\I_{[1, +\infty)}S_f\I_{[1, +\infty)}}_{\mathcal{J}_1}\lesssim \normH{1}{f},\\
&\norm{\I_{[1, +\infty)}T_f\I_{[1, +\infty)}}_{\mathcal{J}_1}\lesssim \normH{3/4}{f}+\normH{2}{f},\\
&\norm{\I_{[1, +\infty)}Z_f\I_{[1, +\infty)}}_{\mathcal{J}_1}\lesssim \normH{1}{f}.
\end{align*}
\end{lemma}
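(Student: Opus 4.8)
The plan is to apply Proposition \ref{B:norm_est} to each of the three kernels supplied by Lemma \ref{5:ker_expr}, reading off the factorization $K(x,y)=\int_\R h_1(x,t)h_2(y,t)a(t)\,dt$ directly from the formulas for $S_f$, $T_f$, $Z_f$, computing the inner integrals over $x,y\in[1,\infty)$ by means of Lemma \ref{5:asymp_lemma}, and thereby reducing every trace-norm estimate to a one-dimensional weighted integral of $f_0$ or $f'$ in the variable $t$. The recurring inner integral is $g(t):=\int_1^\infty\abs{\mathfrak{D}(xt)}^2\,dx$; substituting $u=xt$ and using $\abs{\mathfrak{D}(u)}\lesssim\frac{\abs{u}^{\Re s}}{1+\abs{u}^{1+\Re s}}$ (with $\Re s>-1/2$ guaranteeing integrability at the origin) gives $g(t)\lesssim\frac{1}{\abs{t}(1+\abs{t})}$.

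For $S_f$ I take $h_1(x,t)=\mathfrak{D}(xt)$, $h_2(y,t)=\overline{\mathfrak{D}(yt)}$, $a(t)=f_0(t)$, so that Proposition \ref{B:norm_est} yields $\norm{\I_{[1,\infty)}S_f\I_{[1,\infty)}}_{\mathcal{J}_1}\lesssim\int_\R\abs{f_0(t)}g(t)\,dt$. Writing the integrand as $\frac{\abs{f_0(t)}}{\abs{t}}\cdot\abs{t}g(t)$ and applying Cauchy--Schwarz, the first factor is controlled by Hardy's inequality $\int_\R\frac{\abs{f_0(t)}^2}{t^2}\,dt\lesssim\norm{f'}_{L_2}^2=\normH{1}{f}^2$ (recall $f_0(t)=\int_0^t f'$), while $\int_\R t^2g(t)^2\,dt\lesssim\int_\R(1+\abs{t})^{-2}\,dt<\infty$; this gives the bound $\normH{1}{f}$. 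The kernel $Z_f$ has the factorization $h_1=\frac{e^{-ixt}}{x}$, $h_2=\frac{e^{iyt}}{y}$ with constant inner integrals $\int_1^\infty x^{-2}\,dx=1$, so the estimate reduces to controlling the total variation $\int_\R\bigl|\frac{d}{dt}\frac{f_0(t)}{t}\bigr|\,dt$, where $\frac{d}{dt}\frac{f_0(t)}{t}=\frac{1}{t^2}\int_0^t\bigl(f'(t)-f'(u)\bigr)\,du$; away from the origin this is handled by Hardy and Cauchy--Schwarz exactly as for $S_f$, and near the origin by the $\tfrac12$-H\"older continuity of $f'$ furnished by Lemma \ref{C:holder_cond}.

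For $T_f$ I treat the two summands separately, the factor $\frac{e^{-ixt}}{x}$ contributing the constant inner integral $1$ in each. The first summand reduces to $\int_\R\abs{f'(t)}\sqrt{g(t)}\,dt$, and the second, using the refined asymptotics $\bigl|\mathfrak{D}'(u)-\frac{\abs{s}^2e^{-iu}}{\sqrt{2\pi}u}\bigr|\lesssim\frac{\abs{u}^{\Re s-1}}{1+\abs{u}^{1+\Re s}}$ of Lemma \ref{5:asymp_lemma}, reduces to $\int_\R\abs{f_0(t)}\sqrt{h(t)}\,dt$ with $h(t)=\int_1^\infty y^2\bigl|\,\overline{\mathfrak{D}'(yt)}-\abs{s}^2\tfrac{e^{iyt}}{yt}\,\bigr|^2\,dy\lesssim\min(\abs{t}^{-3},\abs{t}^{-4})$, computed as for $g$. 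For $\abs{t}\ge1$, Cauchy--Schwarz against the $L_2$-summable weights $\sqrt{g}\lesssim\abs{t}^{-1}$ and $\sqrt{h}\lesssim\abs{t}^{-2}$ produces $\norm{f'}_{L_2}=\normH{1}{f}$. For $\abs{t}<1$, where $\sqrt{g}\lesssim\abs{t}^{-1/2}$ and $\sqrt{h}\lesssim\abs{t}^{-3/2}$, I invoke the pointwise bounds $\abs{f_0(t)}\lesssim\normH{p}{f}\abs{t}^{p-1/2}$ and $\abs{f'(t)-f'(0)}\lesssim\normH{p}{f'}\abs{t}^{p-1/2}$ of Lemma \ref{C:holder_cond}, choosing $p$ so that $\abs{t}^{p-1/2}$ times the weight is integrable at $0$; the $\sqrt{h}$-term forces $p$ slightly above $1$, i.e.\ a norm $\normH{5/4}{f}$, which the interpolation remark $\normH{p_1}{f}\lesssim\normH{p_2}{f}+\normH{p_3}{f}$ for $p_2\le p_1\le p_3$ absorbs into $\normH{3/4}{f}+\normH{2}{f}$.

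The main obstacle is the behaviour as $t\to0$: this is precisely where $\mathfrak{D}$ and $\mathfrak{D}'$ are largest and the weights $\sqrt{g}$, $\sqrt{h}$ most singular, so the whole estimate hinges on the competition between these singularities and the H\"older regularity of $f_0$ and $f'$. This competition is what fixes the admissible exponents --- the lowest, $3/4$, from the weakest integrability requirement, and the highest, $2$, from the summands of $T_f$ and $Z_f$ carrying an extra derivative, where one must exploit that $f'$ is itself $\tfrac12$-H\"older. A secondary, purely bookkeeping, point is to verify that the non-homogeneous contributions (the constant $f(0)$ and the free term $\tfrac{1}{\sqrt{2\pi}}e^{i(y-x)t}$) cancel in the combination defining $\mathcal{R}_f$, so that all surviving quantities are genuinely controlled by homogeneous Sobolev norms; this is ensured by the systematic appearance of $f_0$ rather than $f$ throughout Lemma \ref{5:ker_expr}.
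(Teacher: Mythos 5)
Your proposal is correct and follows essentially the same route as the paper: apply Proposition \ref{B:norm_est} to the three kernels of Lemma \ref{5:ker_expr}, bound the inner $x,y$-integrals via Lemma \ref{5:asymp_lemma} (your log-free bound $g(t)\lesssim\frac{1}{\abs{t}(1+\abs{t})}$ is valid for fixed $s$ since $\Re s>-1/2$), split the $t$-integrals near and away from the origin, and absorb the resulting $\normH{1}{f}$, $\normH{5/4}{f}$ norms by interpolation --- your only deviation being the use of Hardy's inequality plus Cauchy--Schwarz where the paper uses the pointwise H\"older bound of Lemma \ref{C:holder_cond}, an equivalent device. One remark: for $Z_f$ your argument (exactly like the paper's own, whose final display appears to contain a typo) actually yields $\normH{1}{f}+\normH{2}{f}$ rather than the stated $\normH{1}{f}$, since controlling $\frac{d}{dt}\bigl(f_0(t)/t\bigr)$ near $t=0$ genuinely requires the H\"older continuity of $f'$; this is harmless, as the downstream Lemma \ref{2:diff_tr_est} only claims the bound $\normH{3/4}{f}+\normH{2}{f}$.
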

\begin{proof}
\textbf{Estimate for $S_f$}

By Lemma \ref{5:asymp_lemma} we have
\begin{equation}\label{5_eq:est_lem_1}
    \int_1^{+\infty}\abs{\mathfrak{D}(xt)}^2dx \lesssim \frac{1}{\abs{t}}\int_{\abs{t}}^{+\infty}\frac{1}{x(1+\sqrt{x})^2}dx \lesssim \frac{\abs{\ln\abs{t}}}{\abs{t}(1+\abs{t})}.
    \end{equation}
    Thereby using Proposition \ref{B:norm_est} and Lemma \ref{C:holder_cond} we conclude
    \[
 \norm{\mathbb{I}_{[1, \infty)}S_f\mathbb{I}_{[1, \infty)}}_{\mathcal{J}_1} \lesssim \int_\R\frac{\abs{\ln\abs{t}}}{\abs{t}(1+\abs{t})}f_0(t)dt \lesssim \normH{1}{f}\int_\R\frac{\abs{\ln\abs{t}}}{\abs{t}^{1/2}(1+\abs{t})}dt.
 \]
 
\textbf{Estimate for $T_f$}

The kernel $T_f$ consists of two terms. The estimate for the first term follows from Proposition~\ref{B:norm_est} and the inequality \eqref{5_eq:est_lem_1} directly
\[
\int_\R\sqrt{\int_1^\infty \abs{\mathfrak{D}(xt)}^2dx}\sqrt{\int_1^\infty\frac{1}{y^2}dy}\abs{f'(t)}dt \lesssim \int_\R \frac{\abs{f'(t)}\abs{\ln\abs{t}}^{1/2}}{t^{1/2}(1+\abs{t}^{1/2})}dt.
\]
The Cauchy-Bunyakovsky-Schwarz inequality yields
\[
\int_{\R\setminus [-1, 1]} \frac{\abs{f'(t)}\abs{\ln\abs{t}}^{1/2}}{t^{1/2}(1+\abs{t}^{1/2})}dt \lesssim \normH{1}{f}\sqrt{2\int_1^\infty \frac{\ln t}{t(1+t)}dt}.
\]
Last, by Lemma \ref{C:holder_cond} we conclude
\[
\int_{[-1, 1]} \frac{\abs{f'(t)}\abs{\ln\abs{t}}^{1/2}}{t^{1/2}(1+\abs{t}^{1/2})}dt \lesssim \normH{2}{f}.
\]

Let us proceed to the second term. From Lemma \ref{5:asymp_lemma} we have
\[
    \sqrt{\int_1^\infty \abs*{y\left(\mathfrak{D}'(yt)-\abs{s}^2\frac{e^{-iyt}}{yt}\right)}^2dx} \lesssim \sqrt{\frac{\abs{\ln\abs{t}}}{\abs{t}^3(1+\abs{t})}},
\]
from which we deduce by Lemma \ref{C:holder_cond}
    \begin{multline*}
    \int_\R \sqrt{\int_1^\infty x\abs*{\mathfrak{D}'(xt)-\abs{s}^2\frac{e^{-ixt}}{xt}}^2dx}\sqrt{\int_1^\infty\frac{1}{y^2}dy}\abs{f_0(t)}dt \lesssim \int_\R\abs{f_0(t)}\frac{\abs{\ln\abs{t}}^{1/2}}{\abs{t}^{3/2}\sqrt{(1+\abs{t})}}dt \lesssim\\
    \lesssim \normH{5/4}{f}\int_\R \frac{\abs{\ln\abs{t}}^{1/2}}{\abs{t}^{3/4}\sqrt{1+\abs{t}}}dt.
    \end{multline*}
    Proposition \ref{B:norm_est} finishes the proof for $T_f$.
    
\textbf{Estimate for $Z_f$}
    
    Express the derivative
    \[
    \frac{d}{dt}\left(\frac{f_0(t)}{t}\right) = \frac{f'(t)-f'(0)}{t}-\frac{1}{t^2}\int_0^tdu\int_0^uf''(v)dv.
    \]
    Lemma \ref{C:holder_cond} yields
    \begin{align*}
    &\abs*{\frac{d}{dt}\left(\frac{f_0(t)}{t}\right)}\lesssim \frac{\normH{2}{f}}{\sqrt{\abs{t}}},\\
    & \abs*{\frac{d}{dt}\left(\frac{f_0(t)}{t}\right)}\lesssim \abs*{\frac{f_0(t)}{t^2}}+\abs*{\frac{f'(t)}{t}}\lesssim \frac{\normH{1}{f}}{\abs{t}^{3/2}} + \frac{\abs{f'(t)}}{\abs{t}}.
    \end{align*}
    Using the first estimate for $t\in [-1, 1]$ and the second for $t\in \R\setminus [-1, 1]$ we conclude by Proposition~\ref{B:norm_est} that
    \[
    \norm{\I_{[1, \infty)}Z_f\I_{[1, \infty)}}_{\mathcal{J}_1} \lesssim \normH{1}{f}\int_{-1}^1 \frac{1}{\sqrt{\abs{t}}}dt + \normH{1}{f}\int_{\R\setminus [-1, 1]}\frac{1}{\abs{t}^{3/2}}dt + \int_{\R\setminus [-1, 1]}\frac{\abs{f'(t)}}{\abs{t}}dt\lesssim \normH{1}{f}.
    \]
    
    Lemma \ref{5:tr_est} is proved.
\end{proof}

\subsection{Trace class condition for $\I_{[0, 1]}\mathcal{R}_f$}

\begin{lemma}\label{5:01_est}
For $f\in L_\infty(\R)$ such that $x^2f(x)\in L_2(\R)$ we have that $\I_{[0, 1]}\CT f\in\mathcal{J}_1$.
\end{lemma}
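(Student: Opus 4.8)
The plan is to prove the statement as an assertion about the integral operator $\I_{[0, 1]}\CT f$ on $L_2(\R)$, whose kernel is $\I_{[0, 1]}(\omega)\CT(\omega x)f(x)$. Since $\omega\in(0, 1]$ gives $\sign(\omega x)=\sign x$, the unimodular factor $e^{i\pi\Re s\sign(\omega x)}=e^{i\pi\Re s\sign x}$ in $\CT(\omega x)$ depends only on $x$ and may be absorbed into $f$ without affecting the hypotheses; using Lemma \ref{5:asymp_lemma} I would then write $\CT(\omega x)=\frac{e^{-i\omega x}}{\sqrt{2\pi}}+\mathfrak{D}(\omega x)$ (up to that factor), so that $\I_{[0, 1]}\CT f$ is the sum of $\I_{[0, 1]}\F f$ and the integral operator with kernel $\I_{[0, 1]}(\omega)\mathfrak{D}(\omega x)f(x)$. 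The $\mathfrak{D}$-summand is the milder one: by Lemma \ref{5:asymp_lemma} we have $\abs{\mathfrak{D}(u)}\lesssim\abs{u}^{\Re s}/(1+\abs{u}^{1+\Re s})$, so $\mathfrak{D}\in L_2(\R)$ (here $\Re s>-1/2$ is exactly what makes it square integrable at the origin), and this extra decay only improves every integral below. The essential difficulty is therefore already present in the pure Fourier piece $\I_{[0, 1]}\F f$, i.e.\ in the case $s=0$.

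First I would record the cheap bound: since $\CT$ is bounded (again Lemma \ref{5:asymp_lemma}) and $(1+x^2)^{-1/2}\in L_2(\R)$, the weighted operator $\I_{[0, 1]}\CT M_{(1+x^2)^{-1/2}}$ is Hilbert--Schmidt, with $\norm{\I_{[0, 1]}\CT M_{(1+x^2)^{-1/2}}}_{\mathcal{J}_2}^2\lesssim\int_0^1\int_\R (1+x^2)^{-1}\,dx\,d\omega<\infty$; likewise $\I_{[0, 1]}\CT f$ is itself Hilbert--Schmidt because $x^2f\in L_2$ and $f\in L_\infty$ force $f\in L_2$. The entire content of the lemma is the upgrade from Hilbert--Schmidt to trace class, and this is where the hypothesis $x^2f\in L_2$ (one full power of decay beyond the $L_2$ needed for Hilbert--Schmidt) must be spent. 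I would carry out a dyadic decomposition in the physical variable, $f=\sum_{k\ge 0}f_k$ with $f_0=f\I_{[-1, 1]}$ and $f_k=f\I_{\{2^{k-1}<\abs{x}\le 2^k\}}$, and estimate each piece $\I_{[0, 1]}\CT f_k$ separately. For the $k$-th piece the input is localized to an interval of length $\sim 2^k$ and the output to $[0, 1]$, so the relevant time--frequency volume is $\sim 2^k$, and the target is the per-piece bound $\norm{\I_{[0, 1]}\CT f_k}_{\mathcal{J}_1}\lesssim 2^{k/2}\norm{f_k}_{L_2}$. Summing, and using $\norm{f_k}_{L_2}\lesssim 2^{-2k}\norm{x^2 f}_{L_2}$ on the support of $f_k$, gives $\sum_k 2^{k/2}\norm{f_k}_{L_2}\lesssim\norm{x^2 f}_{L_2}\sum_k 2^{-3k/2}<\infty$, hence $\I_{[0, 1]}\CT f\in\mathcal{J}_1$.

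The main obstacle is precisely this per-piece estimate, equivalently the region of small output frequency $\omega\to 0$. Away from the origin, say $\omega\in[\varepsilon, 1]$, one could integrate by parts in $x$ to trade decay of $f$ for smoothness of the kernel, exactly as in Lemma \ref{5:tr_est} for the range $[1, +\infty)$; but this produces factors $1/\omega$ that are not integrable near $\omega=0$, so smoothness of $f$ alone cannot yield the trace bound there. The trace-class property near $\omega=0$ instead comes from the finiteness of the output window $[0, 1]$ together with the spatial localization of $f_k$: the piece $\I_{[0, 1]}\CT f_k$ behaves like a finite section of the (essentially unitary) transform $\CT$ between boxes of phase-space volume $\sim 2^k$, so its trace norm is controlled by $(\text{volume})^{1/2}$ times its Hilbert--Schmidt norm. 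Making this rigorous is the heart of the matter; I would either compare $\I_{[0, 1]}\CT f_k$ with the corresponding time--frequency (prolate-type) localization operator, whose singular values have summable square roots, or perform a lattice decomposition into phase-space cells of unit volume, on each of which the operator has effective rank $\lesssim 1$ and trace norm comparable to its Hilbert--Schmidt norm (a Birman--Solomyak-type bound, for which Proposition \ref{B:norm_est} supplies the cell-wise estimate). Finally, $\I_{[0, 1]}\CT f\in\mathcal{J}_1$ yields $\I_{[0, 1]}\mathcal{R}_f\in\mathcal{J}_1$ by writing $\mathcal{R}_f=\CT f\CT^*-\F f\F^*$ and multiplying on the right by the bounded operators $\CT^*\I_+$ and $\F^*\I_+$, the Fourier term being the same statement with $s=0$.
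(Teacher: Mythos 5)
Your overall scheme---splitting off the unimodular factor, the dyadic decomposition $f=\sum_k f_k$, and the arithmetic $\sum_k 2^{k/2}\norm{f_k}_{L_2}\lesssim\norm{x^2f}_{L_2}\sum_k 2^{-3k/2}<\infty$---is sound, but the proof has a genuine gap exactly at the step you yourself flag as ``the heart of the matter'': the per-piece bound $\norm{\I_{[0,1]}\CT f_k}_{\mathcal{J}_1}\lesssim 2^{k/2}\norm{f_k}_{L_2}$ is never established. The inequality $\norm{A}_{\mathcal{J}_1}\le\sqrt{\operatorname{rank}A}\,\norm{A}_{\mathcal{J}_2}$ requires genuinely finite rank, and $\I_{[0,1]}\CT f_k$ has infinite rank; ``effective rank'' is not a substitute. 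Neither of the two routes you sketch closes this as stated: prolate-type spectral theory is a statement about the Fourier transform and does not transfer to $\CT$ without a new argument (for $s\ne 0$ the kernel $\CT(\omega x)$ is not of convolution type, so Birman--Solomyak-type theorems about $g(x)h(D)$ do not apply to it either), and Proposition \ref{B:norm_est} does not ``supply the cell-wise estimate'' by itself---it requires the kernel to be exhibited in the form $\int h_1(x,t)h_2(y,t)a(t)\,dt$, and you never produce such a representation for $\I_{[0,1]}(\omega)\CT(\omega x)f_k(x)$ on a phase-space cell. For the same reason, dismissing the $\mathfrak{D}$-part as ``milder'' is unsupported at the level of trace norms: the extra decay of $\mathfrak{D}$ improves the Hilbert--Schmidt estimates, but the missing localization ingredient is missing for that piece as well. (Only the pure Fourier piece can be finished by citation: $f\in L_\infty$ with $x^2f\in L_2$ gives $f\in\ell^1(L_2)$, so $\I_{[0,1]}(D)f(x)\in\mathcal{J}_1$ by the Birman--Solomyak/Simon theorem; but that theorem says nothing about the $\CT$-specific remainder.)

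The paper closes precisely this point by a different and much shorter mechanism: instead of localizing in the input variable, it exploits smoothness in the output variable on the finite window $[0,1]$. One writes $\I_{[0,1]}=x^{\bar{s}}P_{\I_{[0,1]}}x^{-\bar{s}}+\I_{[0,1]}x^{\bar{s}}V\partial_x x^{-\bar{s}}\I_{[0,1]}$, where $P_{\I_{[0,1]}}$ is the rank-one projection onto $\I_{[0,1]}$ and $V$ is the Volterra operator; then $\I_{[0,1]}\CT f$ becomes (rank one)$\times$(bounded) plus the product of the Hilbert--Schmidt operator $x^{\bar{s}}V$ with $\I_{[0,1]}\partial_x x^{-\bar{s}}\CT f$, and trace class follows from $\mathcal{J}_2\cdot\mathcal{J}_2\subset\mathcal{J}_1$. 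Differentiating the kernel $e^{-ixy}Z_s(xy)f(y)$ in the output variable brings down a factor $y$, and this is exactly what the hypothesis $x^2f\in L_2$ pays for; the required bounds on $Z_s$ and $Z_s'$ are those of Lemma \ref{5:asymp_lemma}. If you want to keep your scheme, you must actually prove the per-cell trace bound; otherwise the Volterra factorization is the efficient route.
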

\begin{proof}
Express the multiplication by $\mathbb{I}_{[0, 1]}$ as follows
\[
\mathbb{I}_{[0, 1]} = \mathbb{I}_{[0, 1]}x^{\bar{s}}x^{-\bar{s}}\mathbb{I}_{[0, 1]} = x^{\bar{s}}P_{\I_{[0, 1]}}x^{-\bar{s}} + \mathbb{I}_{[0, 1]}x^{\bar{s}}V\partial_xx^{-\bar{s}}\mathbb{I}_{[0, 1]},
\]
where $P_{\I_{[0, 1]}}$ is a one-dimensional orthogonal projector on $\I_{[0, 1]}$ and $V$ is the Volterra operator, defined by the formula
\[
(Vf)(x) = \int_0^xf(t)dt.
\]
A direct calculation shows that $x^{\bar{s}}V$ is Hilbert-Schmidt. The claim is thereby proved once one shows that $\abs{x}^{-\bar{s}}\mathbb{I}_{[0, 1]}\CT f$ is bounded and $\mathbb{I}_{[0, 1]}\partial_xx^{-\bar{s}}\CT f$ is Hilbert-Schmidt.

To show that $x^{-\bar{s}}\CT f$ is bounded observe that since $\rho(x)\psi(x) = x^{\bar{s}}e^{-\frac{i\pi}{2}\bar{s}}$ we have that its kernel is
\[
K_{\I_{[0, 1]}x^{-\bar{s}}\CT f}(x, y) = e^{-\frac{i\pi}{2}\bar{s}}\frac{e^{-ixy}}{\sqrt{2\pi}}Z_s(xy)f(y).
\]
By Lemma \ref{5:asymp_lemma} we have that
\[
\abs*{K_{\I_{[0, 1]}x^{-\bar{s}}\CT f}(x, y)}\lesssim\begin{cases} 
(1+\abs{xy}^{-\bar{s}})\abs{f(y)}, \Re s \le 0,\\
\abs{f(y)}, \Re s \ge 0.
\end{cases}
\]
We conclude that $K_{\I_{[0, 1]}x^{-\bar{s}}\CT f}(x, y)$ is square integrable on $[0, 1]\times \R$ under the assumptions of the lemma.

The kernel of the operator $N=\I_{[0, 1]}\partial_xx^{-\bar{s}}\CT f$ is equal to
\[
N(x, y) = -iK_{\I_{[0, 1]}x^{-\bar{s}}\CT f}(x, y)yf(y) + \frac{e^{-ixy}}{\sqrt{2\pi}}e^{-\frac{i\pi}{2}\bar{s}}Z_s'(xy)yf(y).
\]
The first kernel is Hilbert-Schmidt by the above argument. For the second kernel we use the estimate
\[
\abs{Z_s'(x)}\lesssim 1,
\]
which follows from the derivative formula \eqref{5_eq:5_1_der} and the asymptotic expansion \eqref{5_eq:D_asymp}. Therefore the second term is also Hilbert-Schmidt by a direct calculation. Lemma \ref{5:01_est} is proven completely.
\end{proof}

\subsection{Proof of Lemma \ref{2:diff_tr_est}}

\begin{proof}[Proof of Lemma \ref{2:diff_tr_est}]
The second claim is a direct consequence of Lemmata \ref{5:ker_expr}, \ref{5:tr_est}. We again note that, given the inequality is proven for compactly supported functions, it follows for all $H_2(\R)$ functions from extension by continuity. This also implies that $\I_{[1, \infty)}\mathcal{R}_f\I_{[1, \infty)}$ is trace class for any $f\in H_2(\R)$.

To prove the first claim express the operator $\mathcal{R}_f$ as follows
\[
\mathcal{R}_f = \I_{[1, +\infty)}\mathcal{R}_f\I_{[1, +\infty)} + \I_{[1, +\infty)}\mathcal{R}_f\I_{[0, 1]}+ \I_{[0, 1]}\mathcal{R}_f.
\]
The first summand is trace class by the second claim. The second and last term are trace class by Lemma \ref{5:01_est}.
\end{proof}

\section{Continuity of the remainder $Q(f)$}\label{sect:rem}
This section is devoted to the proof of Lemma \ref{2:remainder_continuity}. First introduce the notation and recall that
\begin{align*}
&Q(f) = Y(f)\exp\left(\Tr\mathbb{I}_{[1, \infty)}\mathcal{R}_f\mathbb{I}_{[1, \infty)}\right),\\
&Y(f) = \det(I+\mathcal{K}),\\
&\mathcal{K} = \I_{[1, +\infty)}(W_{e^{f_-}}\CO{e^{-f_+}}\CO{e^{-f_-}}W_{e^{f_+}}-I)\I_{[1, +\infty)}.
\end{align*}
\begin{lemma}
For $f\in \F^*L_1(\R)$ we have that
\[
\mathcal{K} = \I_{[1, +\infty)}W_{e^{f_-}}\I_{[1, +\infty)}\bra*{[\CO{e^{-f_+}},\CO{e^{-f_-}}]+\mathcal{R}_{e^{-f}}}\I_{[1, +\infty)}W_{e^{f_+}}\I_{[1, +\infty)}.
\]
\end{lemma}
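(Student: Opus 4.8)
The plan is to treat this as a purely algebraic identity assembled from the factorization and triangularity properties collected in Theorem \ref{4:WH_properties}; no trace-class input is required, only that $f\in\F^*L_1(\R)$, so that all operators $\CO{e^{\pm f_\pm}}$ and $W_{e^{\pm f_\pm}}$ are bounded and the homomorphism relations apply to the exponentials $e^{\pm f_\pm}$, which lie in the unitizations of the Banach algebras $\F^*L_1(\R_\pm)$ (note $e^{f_\pm}-1\in\F^*L_1(\R_\pm)$). Write $\I = \I_{[1, +\infty)}$ and $P = \I_{[0, 1]}$, so that $P+\I = \I_+$ is the identity on $L_2(\R_+)$, on which all the operators effectively act.

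First I would absorb the subtracted identity into the product. Using $W_{gh}=W_gW_h$ for same-sign symbols and the cross-relation $W_{f_+f_-}=W_{f_-}W_{f_+}$ from Theorem \ref{4:WH_properties}, together with the mutual invertibility $W_{e^{f_\pm}}W_{e^{-f_\pm}}=I$, one checks that $W_{e^{f_-}}W_{e^{-f}}W_{e^{f_+}} = W_{e^{f_-}}W_{e^{-f_-}}W_{e^{-f_+}}W_{e^{f_+}} = I$ on $L_2(\R_+)$. Hence
\[
W_{e^{f_-}}\CO{e^{-f_+}}\CO{e^{-f_-}}W_{e^{f_+}} - I = W_{e^{f_-}}\bra*{\CO{e^{-f_+}}\CO{e^{-f_-}} - W_{e^{-f}}}W_{e^{f_+}}.
\]
Next I would identify the bracketed middle operator, call it $M$, with $[\CO{e^{-f_+}},\CO{e^{-f_-}}]+\mathcal{R}_{e^{-f}}$. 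The cross-factorization $\CO{f_+f_-}=\CO{f_-}\CO{f_+}$ gives $\CO{e^{-f_-}}\CO{e^{-f_+}}=\CO{e^{-f}}$, so $\CO{e^{-f_+}}\CO{e^{-f_-}} - W_{e^{-f}} = [\CO{e^{-f_+}},\CO{e^{-f_-}}] + (\CO{e^{-f}} - W_{e^{-f}})$, and the last difference is exactly $\mathcal{R}_{e^{-f}}$ by definition.

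The final and most delicate step is to show that the outer $\I$'s can be pushed inward, i.e. $\I W_{e^{f_-}} M W_{e^{f_+}}\I = \I W_{e^{f_-}}\I M \I W_{e^{f_+}}\I$. I would insert $\I_+ = P+\I$ on both sides of $M$ and expand into four terms. The triangularity relations $P W_{e^{f_+}}\I = 0$ and $\I W_{e^{f_-}} P = 0$ then annihilate the three cross terms, leaving precisely $\I W_{e^{f_-}}\I M \I W_{e^{f_+}}\I$, which is the asserted formula. The main obstacle is exactly this step: I must verify the triangularity for the exponential symbols $e^{f_\pm}$ rather than merely for $f_\pm$. This reduces to the third part of Theorem \ref{4:WH_properties} upon writing $W_{e^{f_\pm}}=I+W_{e^{f_\pm}-1}$, since the constant term contributes $P\I=\I P=0$ and the remainder is handled by the stated relations; one must also keep careful track of which of the four expanded terms survive.
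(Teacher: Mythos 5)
Your proof is correct and follows essentially the same route as the paper: absorb the subtracted identity via $W_{e^{f_-}}W_{e^{-f}}W_{e^{f_+}}=I$, rewrite the middle factor using the cross-factorization $\CO{e^{-f_-}}\CO{e^{-f_+}}=\CO{e^{-f}}$ together with the definition of $\mathcal{R}_{e^{-f}}$, and then push the projections $\I_{[1,+\infty)}$ inward using the triangularity relations of Theorem \ref{4:WH_properties}. Your explicit check that triangularity extends to the exponential symbols (writing $W_{e^{f_\pm}}=I+W_{e^{f_\pm}-1}$ with $e^{f_\pm}-1\in\F^*L_1(\R_\pm)$) is a detail the paper's terser proof leaves implicit.
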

\begin{proof}
Indeed, by the last claim of Theorem \ref{4:WH_properties} we have
\[
\I_{[1, +\infty)}W_{e^{f_-}} = \I_{[1, +\infty)}W_{e^{f_-}}\I_{[1, +\infty)}, \quad W_{e^{f_+}}\I_{[1, +\infty)} = \I_{[1, +\infty)}W_{e^{f_+}}\I_{[1, +\infty)}.
\]
Further, the first claim of Theorem \ref{4:WH_properties} also yields that
\[
G_{e^{-f}} = G_{e^{-f_-}}G_{e^{-f_+}}, \quad W_{e^{f_-}}W_{e^{-f}}W_{e^{f_+}}=I.
\]
Substituting directly the commutator
\[
G_{e^{-f_+}}G_{e^{-f_-}} = [G_{e^{-f_+}},G_{e^{-f_-}}]+G_{e^{-f_-}}G_{e^{-f_+}}
\]
into the expression for $\mathcal{K}$ and using the mentioned properties one obtains the assertion of the lemma.
\end{proof}

\begin{lemma}\label{6:comm_formula}
We have for arbitrary constants $\alpha, \beta$ that
\[
[\CO{e^{-f_+}},\CO{e^{-f_-}}] = \I_+\CT (e^{-f_+}+\alpha)\abs{x}^{2i\Im s}\F^*\I_-\F \abs{x}^{-2i\Im s}(e^{-f_-}+\beta)\CT^*\I_+.
\]
\end{lemma}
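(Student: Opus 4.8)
The plan is to conjugate everything into the Fourier frame provided by Theorem \ref{3:PW_th} and then read the identity off from the triangularity of $e^{-f_\pm}$ with respect to the Paley--Wiener splitting. Write $M = \abs{x}^{2i\Im s}$ for the associated unitary multiplication operator, so that $M^* = \abs{x}^{-2i\Im s}$ and, by Theorem \ref{3:PW_th}, $\CT^*\I_+\CT = M\F^*\I_+\F M^*$. Conjugating by $\CT$ and using unitarity of $\CT$ and $M$ gives $\CT M\F^*\I_\pm\F M^*\CT^* = \I_\pm$, whence the two boundary annihilation identities
\[
\I_+\CT M\F^*\I_-\F = \I_+\I_-\CT M = 0, \qquad \F^*\I_-\F\, M^*\CT^*\I_+ = M^*\CT^*\I_-\I_+ = 0,
\]
the second being the adjoint of the first. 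These are the only analytic inputs I will need.

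First I would expand $\CO{e^{-f_+}}\CO{e^{-f_-}}$. Inserting $\CT^*\I_+\CT = M\F^*\I_+\F M^*$ in the middle and commuting the multiplication operators $e^{-f_\pm}$ through $M,M^*$ (they commute, being multiplications), one obtains
\[
\CO{e^{-f_+}}\CO{e^{-f_-}} = \I_+\CT M\bigl(e^{-f_+}\F^*\I_+\F\, e^{-f_-}\bigr)M^*\CT^*\I_+,
\]
and symmetrically for $\CO{e^{-f_-}}\CO{e^{-f_+}}$. Writing $\F^*\I_+\F = I - \F^*\I_-\F$, the ``bulk'' contribution is governed by $e^{-f_+}e^{-f_-} = e^{-f_-}e^{-f_+} = e^{-f}$; in both products it equals $\I_+\CT e^{-f}\CT^*\I_+ = \CO{e^{-f}}$ (consistently with Theorem \ref{4:WH_properties}), so it cancels in the commutator. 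This reduces the commutator to the two terms
\[
[\CO{e^{-f_+}},\CO{e^{-f_-}}] = \I_+\CT M\bigl(e^{-f_-}\F^*\I_-\F\,e^{-f_+} - e^{-f_+}\F^*\I_-\F\,e^{-f_-}\bigr)M^*\CT^*\I_+.
\]

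Next I would discard one of the two terms by triangularity. Since $f_-\in\F^*L_1(\R_-)$, the function $e^{-f_-}$ has Fourier transform supported on $\R_-$, so multiplication by it preserves the range of $\F^*\I_-\F$; equivalently $e^{-f_-}\F^*\I_-\F = \F^*\I_-\F\, e^{-f_-}\F^*\I_-\F$. Hence $e^{-f_-}\F^*\I_-\F\,e^{-f_+}$ carries a leading factor $\F^*\I_-\F$, which is killed by the left annihilation identity $\I_+\CT M\F^*\I_-\F = 0$. Only the term in $e^{-f_+}\F^*\I_-\F\,e^{-f_-}$ survives, and pushing $M,M^*$ back to their positions adjacent to $\F^*\I_-\F$ reproduces the right-hand side of the claimed identity. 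Finally, to see that the constants $\alpha,\beta$ may be inserted freely, I would expand $(e^{-f_+}+\alpha)\F^*\I_-\F(e^{-f_-}+\beta)$ inside the sandwich and check that each of the three extra terms vanishes: the $\alpha\beta$ term by $\CT M\F^*\I_-\F M^*\CT^* = \I_-$ together with $\I_+\I_- = 0$, the $\alpha$ term by the left annihilation identity, and the $\beta$ term by the right one.

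The main obstacle is the triangularity step, namely establishing cleanly that $e^{-f_\pm}$ preserves the corresponding Paley--Wiener subspace. This rests on $\F^*L_1(\R_\pm)$ being a Banach algebra together with the fact that the Fourier support of a product is contained in the sum of the supports, so that $e^{-f_\pm}-1\in\F^*L_1(\R_\pm)$ while the constant part only contributes a multiple of $\F^*\I_\pm\F$ itself and is therefore harmless. Everything else is bookkeeping with the two annihilation identities, and the only real care needed is in matching the placement of $M$ and $M^*$ with the statement and in confirming that it is precisely the boundary contributions, including all the $\alpha,\beta$-dependent ones, that are annihilated.
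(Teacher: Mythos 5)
Your proposal is correct and rests on the same two pillars as the paper's own proof: the Paley--Wiener relation of Theorem \ref{3:PW_th} and the fact that multiplication by $e^{-f_\pm}$ respects the half-line splitting of Fourier supports. The differences are organizational rather than conceptual. The paper first invokes the factorization $\CO{e^{-f_-}}\CO{e^{-f_+}}=\CO{e^{-f}}$ from the first claim of Theorem \ref{4:WH_properties}, so only one product needs expanding, and only then applies Theorem \ref{3:PW_th}; you instead conjugate into the Fourier frame from the start, cancel the bulk terms by commutativity of multiplication operators, and re-derive the needed triangularity (that $e^{-f_-}$ preserves the range of $\F^*\I_-\F$) from the Banach-algebra and convolution-support argument --- which is in essence a re-proof of the cited claim of Theorem \ref{4:WH_properties}, and your justification of it is sound. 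For the constants, the paper notes that $\CO{\alpha}=\alpha\I_+$ is central and shifts the symbols on the left-hand side, while you verify directly that the three extra $\alpha,\beta$-terms on the right-hand side are annihilated; your version has the small merit of making the $\alpha,\beta$-independence of the right-hand side explicit. One caveat you share with the paper: since $\I_+-I=-\I_-$, the surviving boundary term in your displayed reduction carries a minus sign, so the computation actually produces the \emph{negative} of the stated right-hand side (the last equality in the paper's own proof commits the identical slip). This sign is harmless for the sequel, where the lemma enters only through trace- and Hilbert--Schmidt-norm bounds in Lemmata \ref{6:rem_est} and \ref{2:remainder_continuity}, but you should record it rather than assert that the surviving term reproduces the right-hand side exactly as stated.
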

\begin{proof}
Recall the definition of $G_f$
\[
G_f = \I_+\CT f\CT^*\I_+.
\]
Observe that $G_{\alpha} = \alpha \I_+$ by Theorem \ref{3:Diagonalization}. This operator commutes with $G_f$ so we may assume that $\alpha, \beta$ are equal to zero.
Substituting the definition into the expression for the commutator and using the first claim of Theorem \ref{4:WH_properties} we get
\begin{multline*}
[\CO{e^{-f_+}},\CO{e^{-f_-}}] =\I_+\CT e^{-f_+}\CT^*\I_+\CT e^{-f_-}\CT^*\I_+ -\\
- \I_+ \CT e^{-f_+} \CT^*\CT e^{-f_-}\CT^*\I_+ = \I_+\CT e^{-f_+} \CT^*\I_- \CT e^{-f_-}\CT^*\I_+.
\end{multline*}
Theorem \ref{3:PW_th} finishes the proof.
\end{proof}

Recall the notation $f_0(x) = f(x)-f(0)$.

\begin{lemma}\label{6:rem_est}
We have that
\[
\norm{\I_{[1, +\infty)} \CT f_0\abs{x}^{2i\Im s}\F^*\I_-}_{\mathcal{J}_2}\lesssim \normH{3/4}{f}+\normH{2}{f}.
\]
\end{lemma}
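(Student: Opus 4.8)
The goal is to bound the Hilbert-Schmidt norm of the operator $\I_{[1, +\infty)}\CT f_0\abs{x}^{2i\Im s}\F^*\I_-$. The plan is to write this operator as an explicit integral operator and then estimate the $L_2$-norm of its kernel directly, since the Hilbert-Schmidt norm is precisely the $L_2$-norm of the kernel. I would begin by computing the kernel. The operator $\CT$ acts by the transform $\CT h(\omega) = \int_\R \CT(\omega x)h(x)dx$, so the composition $\CT f_0$ applied to a function picks up the factor $\CT(\omega t)f_0(t)$, and the subsequent $\abs{t}^{2i\Im s}\F^*\I_-$ contributes the inverse Fourier transform restricted to negative frequencies. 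Writing everything out, the kernel $N(\omega, \lambda)$ for $\omega\in [1, +\infty)$ and $\lambda\in\R_-$ should take the form of an integral in $t$ of $\CT(\omega t)f_0(t)\abs{t}^{2i\Im s}e^{i\lambda t}/\sqrt{2\pi}$, up to constants.

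**Exploiting the asymptotic decomposition.**
The main idea, following the pattern of Lemmata \ref{5:ker_expr} and \ref{5:tr_est}, is to substitute the asymptotic formula from Lemma \ref{5:asymp_lemma} for $\CT$. Recall that $\sqrt{2\pi}\,\mathfrak{D}(x) = e^{-ix}\mathcal{D}(x)$ and that $\CT(x)$ differs from the pure exponential $e^{-ix}/\sqrt{2\pi}$ precisely by the $\mathfrak{D}$-term, since $\CT(x) = e^{-ix}\abs{x}^{0}/\sqrt{2\pi} + \mathfrak{D}(x)$ after accounting for the phase factors (the leading term of $e^{-i\pi\bar s/2}x^{\bar s}Z_s(x)$ is $1$). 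Thus I would split $N$ into a principal part coming from the plain Fourier kernel and a correction part carrying $\mathfrak{D}$. The plain part is essentially a Fourier-type operator whose Hilbert-Schmidt norm can be controlled by the $L_2$-mass of $f_0$ weighted appropriately; the decay of $\mathfrak{D}$ given by Lemma \ref{5:asymp_lemma}, namely $\abs{\mathfrak{D}(x)}\lesssim \abs{x}^{\Re s}/(1+\abs{x}^{1+\Re s})$, furnishes integrability in $\omega$ on $[1, +\infty)$ just as in inequality \eqref{5_eq:est_lem_1}.

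**Handling the singularity and the derivative terms.**
The presence of $f_0(t) = f(t)-f(0)$ rather than $f(t)$ is the key device allowing an integration by parts or a direct Hölder estimate near $t=0$: by Lemma \ref{C:holder_cond}, $\abs{f_0(t)}\lesssim \normH{p}{f}\abs{t}^{p-1/2}$, which cancels the singular behavior. For the principal (Fourier) part I would integrate by parts in $t$ to transfer a derivative onto $f_0$, producing terms involving $f'$ controlled by $\normH{2}{f}$ and boundary contributions that vanish by compact support; this is where the factor $1/\omega$ decay in $\omega$ arises, ensuring square-integrability over $[1, +\infty)$. Combining the estimate \eqref{5_eq:est_lem_1} for the $\mathfrak{D}$-part with the Hölder bound for the low-frequency regime $\abs{t}\le 1$ and the $\dot H_1$-type bound for $\abs{t}\ge 1$, I expect the two indices $3/4$ and $2$ to emerge exactly as in Lemma \ref{5:tr_est}: the index $3/4$ governs the behavior producing the weight $\abs{t}^{-3/4}$ near the origin, and the index $2$ controls the derivative term.

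**Main obstacle.**
The principal difficulty is the careful bookkeeping of the $\omega$-integration over $[1, +\infty)$: unlike the trace-class estimates of Lemma \ref{5:tr_est}, which used Proposition \ref{B:norm_est}, here I only need a Hilbert-Schmidt bound, so it suffices to estimate $\int_1^\infty\int_{\R_-}\abs{N(\omega, \lambda)}^2 d\lambda\, d\omega$. The subtle point is that after integrating out $\lambda\in\R_-$ (which by Plancherel converts the $e^{i\lambda t}$ factor into an $L_2$-statement restricting to negative-frequency content), one is left with a double integral in the two $t$-variables against a kernel built from $\CT(\omega t)\overline{\CT(\omega t')}$, and I must verify that the cross terms do not destroy the decay. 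I anticipate that the restriction to $\R_-$ under $\F^*\I_-$ is what makes the estimate work — it projects away the resonant contribution — and confirming this cancellation quantitatively, rather than losing it to the triangle inequality, will be the delicate step.
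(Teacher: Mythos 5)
Your proposal follows essentially the same route as the paper's proof: split $\CT$ into the phase-corrected Fourier kernel plus the correction $\mathfrak{D}$ via Lemma \ref{5:asymp_lemma}, bound the correction in Hilbert--Schmidt norm by the decay estimate (as in \eqref{5_eq:est_lem_1}) combined with Lemma \ref{C:holder_cond}, and bound the principal Fourier part using the gap between the frequency windows $[1,+\infty)$ and $\R_-$ together with one derivative of the symbol. The paper packages your integration by parts as the single bound $\norm{\I_{[1,+\infty)}\F h\F^*\I_-}_{\mathcal{J}_2}\lesssim\normH{1}{h}$ applied to $h=f_0\,e^{i\pi\Re s\sign x}\abs{x}^{2i\Im s}$, and then estimates $\norm{h'}_{L_2}$.

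Three points to repair in execution. First, the boundary terms in your integration by parts do not vanish ``by compact support'': $f_0=f-f(0)$ is never compactly supported (it tends to $-f(0)$ at infinity), so the kernel integral must be treated distributionally; the clean fix is the paper's formulation above --- the principal part has kernel $\hat h(\omega-\lambda)$ with $\omega-\lambda\ge 1$ on $[1,+\infty)\times\R_-$, so Plancherel gives a Hilbert--Schmidt bound by $\normH{1}{h}$, where the hypothesis $f_0(0)=0$ is exactly what cancels the jump of $e^{i\pi\Re s\sign x}$ at the origin and makes the term $f_0(x)/x$, produced by differentiating $\abs{x}^{2i\Im s}$, square integrable. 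Second, the ``delicate cancellation'' you anticipate in your last paragraph is a non-issue: after Plancherel in $\lambda$ no cross terms appear at all, and the only mechanism needed is the gap $\omega-\lambda\ge\max(\omega,1)$, which is precisely the role of $\I_-$ that you correctly identified. Third, a bookkeeping correction: the H\"older bound with index $3/4$ gives $\abs{f_0(t)/t}\lesssim\normH{3/4}{f}\abs{t}^{-3/4}$, which is \emph{not} square integrable near $t=0$; in the paper $\normH{3/4}{f}$ controls the region $\abs{t}\ge 1$, while near the origin one uses $\abs{f_0(t)}\le\norm{f'}_{L_\infty[-1,1]}\abs{t}$ with $\norm{f'}_{L_\infty[-1,1]}\lesssim\normH{1}{f}+\normH{2}{f}$ --- the opposite of the allocation you suggest. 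None of these affects the overall architecture, which is sound.
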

\begin{proof}
We first show that
\[
\norm{\I_{[1, +\infty)} (\CT -\F e^{i\pi \Re s\sign x})f_0}_{\mathcal{J}_2}\lesssim \normH{3/4}{f}.
\]
Recall the definition
\[
\CT(x) = \frac{e^{-ix}}{\sqrt{2\pi}}\psi(x)\rho(x)Z_s(x)e^{i\pi\Re s \sign x}, \quad \rho(x)\psi(x)=x^{\bar{s}}e^{-\frac{i\pi}{2}\bar{s}}.
\]
By Lemma \ref{5:asymp_lemma} we have that
\[
\abs*{\CT(x) - \frac{e^{-ix}}{\sqrt{2\pi}}e^{i\pi\Re s\sign x}} \lesssim \frac{\abs{x}^{\Re s}}{1+\abs{x}^{1+\Re s}}.
\]
Let $K$ stand for the kernel of the operator $\I_{[1, +\infty)} (\CT -\F e^{i\pi \Re s\sign x})f_0$. The above inequality gives
\[
\abs{K(x, y)}\lesssim \frac{\abs{xy}^{\Re s}\abs{f_0(y)}}{1+\abs{xy}^{1+\Re s}}.
\]

We now estimate the $L_2$ norm of the kernel. We immediately have that
\[
\int_1^\infty \frac{\abs{xy}^{2\Re s}}{(1+\abs{xy}^{1+\Re s})^2}dx \lesssim \frac{1}{\abs{y}(1+\abs{y})}.
\]
Substituting it into the integral over $y$ and using Lemma \ref{C:holder_cond} we conclude
\[
\int_1^\infty dx\int_{\R} dy \abs{K(x, y)}^2 \lesssim \int_{\R} \frac{\abs{f_0(y)}^2}{\abs{y}(1+\abs{y})}dy\lesssim \normH{3/4}{f}^2.
\]

Next we show that
\[
\norm{\I_{[1, +\infty)}\F f_0e^{i\pi\Re s \sign x}\abs{x}^{-2i\Im s}\F^*\I_-}_{\mathcal{J}_2}\lesssim \normH{3/4}{f}+\normH{2}{f}.
\]
A direct calculation gives that for any $1$-Sobolev regular function $f$ we have
\[
\norm{\I_{[1, +\infty)}\F f\F^*\I_-}_{\mathcal{J}_2} \lesssim \normH{1}{f}.
\]
Observe that since $f_0(0)=0$ the function $f_0(x)e^{i\pi\Re s \sign x}\abs{x}^{2i\Im s}$ has square integrable derivative, which is at most
\[
\abs*{\bra*{f_0(x)e^{i\pi\Re s \sign x}\abs{x}^{2i\Im s}}'}\lesssim \abs{f'(x)} + \abs*{f_0(x)/x}.
\]
Thereby its $L_2$ norm may be estimated by
\[
\normH{1}{f_0(x)e^{i\pi\Re s \sign x}\abs{x}^{2i\Im s}} \lesssim \normH{1}{f} + \norm{f_0(x)/x}_{L_2},
\]
since the Parseval Theorem yields that 1-Sobolev seminorm is a constant times $L_2$ norm of the derivative. Let us estimate the second term. We have by Lemma \ref{C:holder_cond} that
\[
\int_{\R\setminus [-1, 1]}\abs*{\frac{f_0(x)}{x}}^2dx \lesssim \normH{3/4}{f}.
\]
Observe that for $t\in [-1, 1]$ we have
\[
\abs{f_0(t)} \le \norm{f'}_{L_\infty [-1, 1]}\abs{t},
\]
where
\[
\norm{f'}_{L_\infty [-1, 1]} \le \norm{\lambda\hat{f}(\lambda)}_{L_1} \lesssim \normH{1}{f} + \normH{2}{f},
\]
by the Cauchy-Bunyakovsky-Schwarz inequality. This finishes the proof of Lemma \ref{6:rem_est}.
\end{proof}

Before proving the main result of the section let us state several properties of the Sobolev spaces $H_p(\R)$, which we prove in the end of the section.

\begin{proposition}\label{6:sob_prop}
\begin{itemize}
\item We have that $H_p(\R)\subset H_q(\R)$ if $p<q$.
\item We have that $H_p(\R)$ is a Banach algebra for $p>1/2$ if endowed with a norm, equal to a constant times $\norm{\cdot}_{H_p}$.
\item Consequently, $e^f-1\in H_p(\R)$ if $f\in H_p(\R)$, $p>1/2$.
\item We have that for any $p\in (0, 1)$
\[
\frac{1}{2}\int_{\R^2}\frac{\abs{f(x)-f(y)}^2}{\abs{x-y}^{1+2p}}dxdy = C_p\normH{p}{f}^2,
\]
where
\[
C_p = 2\pi\int_\R\frac{1-\cos x}{\abs{x}^{1+2p}}dx.
\]
\item We have that for $p\in (0, 1)$
\[
\normH{p}{e^f}\le e^{2\norm{f}_{L_\infty}}\normH{p}{f}.
\]
\item The following inequalities hold
\begin{align*}
&\normH{1}{e^f}\le e^{\norm{f}_{L_\infty}}\normH{1}{f},\\
&\normH{2}{e^f}\le e^{\norm{f}_{L_\infty}}(\normH{2}{f}+\normH{1}{f}(\normH{1}{f}+\normH{2}{f})).
\end{align*}
\end{itemize}
\end{proposition}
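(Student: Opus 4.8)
The plan is to treat the six items in the order stated, working mostly on the Fourier side, where the seminorm $\normH{p}{f}^2=\int_\R\abs{\lambda}^{2p}\abs{\hat f(\lambda)}^2d\lambda$ is a weighted $L_2$-norm. The first item is an elementary weight comparison: from the inequality $1+\abs{\lambda}^{2p}\le 2(1+\abs{\lambda}^{2q})$ for $p\le q$ (checked separately on $\abs{\lambda}\le 1$ and $\abs{\lambda}\ge 1$), the two full Sobolev norms are comparable, giving the inclusion of the corresponding spaces together with continuity of the embedding.

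For the second item I would prove the algebra property in the standard way. Writing $\langle\lambda\rangle=(1+\lambda^2)^{1/2}$, an equivalent norm is $\norm{\langle\lambda\rangle^p\hat f}_{L_2}$, and with the chosen normalization the convolution theorem reads $\widehat{fg}=\hat f*\hat g$. Using the Peetre inequality $\langle\lambda\rangle^p\lesssim\langle\lambda-\mu\rangle^p+\langle\mu\rangle^p$ for $p\ge 0$, I would split $\langle\lambda\rangle^p(\hat f*\hat g)$ into two convolutions and bound each by Cauchy--Bunyakovsky--Schwarz, where the hypothesis $p>1/2$ enters precisely to guarantee $\langle\cdot\rangle^{-p}\in L_2(\R)$; this yields $\norm{fg}_{H_p}\lesssim\norm{f}_{H_p}\norm{g}_{H_p}$, i.e.\ submultiplicativity after rescaling the norm by a constant. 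The third item is then immediate: expanding $e^f-1=\sum_{k\ge 1}f^k/k!$ and applying the algebra inequality term by term produces an absolutely convergent series in $H_p$, so $e^f-1\in H_p$.

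The fourth item is a direct Plancherel computation. Substituting $y=x-h$ and applying Plancherel in $x$ to $f(\cdot)-f(\cdot-h)$, whose Fourier transform is $(1-e^{-i\lambda h})\F f(\lambda)$, I would rewrite the double integral as $2\pi\int_\R\abs{\hat f(\lambda)}^2\left(\int_\R\frac{1-\cos(\lambda h)}{\abs{h}^{1+2p}}dh\right)d\lambda$ using $\abs{1-e^{-i\lambda h}}^2=2(1-\cos\lambda h)$; the scaling $u=\abs{\lambda}h$ turns the inner integral into $\abs{\lambda}^{2p}\int_\R\frac{1-\cos u}{\abs{u}^{1+2p}}du$, producing exactly the weight $\abs{\lambda}^{2p}$ and the constant $C_p$, with $p\in(0,1)$ ensuring convergence of the $u$-integral at both $0$ and $\infty$. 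The fifth item follows at once: from this representation and the pointwise bound $\abs{e^{f(x)}-e^{f(y)}}\le e^{\norm{f}_{L_\infty}}\abs{f(x)-f(y)}$ (valid for complex $f$ by integrating $e^t$ along the segment from $f(y)$ to $f(x)$), the Gagliardo double integral for $e^f$ is dominated by $e^{2\norm{f}_{L_\infty}}$ times that for $f$, which gives the stated bound on $\normH{p}{e^f}$.

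For the last item I would pass to physical space via the Parseval identity $\normH{k}{f}=(2\pi)^{-1/2}\norm{f^{(k)}}_{L_2}$ for integer $k$. The case $p=1$ is then the chain rule $(e^f)'=f'e^f$ combined with $\abs{e^f}\le e^{\norm{f}_{L_\infty}}$. For $p=2$ I would use $(e^f)''=(f''+(f')^2)e^f$, so the estimate reduces to controlling $\norm{(f')^2}_{L_2}=\norm{f'}_{L_4}^2$; the one-dimensional Gagliardo--Nirenberg inequality $\norm{f'}_{L_4}^2\lesssim\norm{f'}_{L_2}^{3/2}\norm{f''}_{L_2}^{1/2}$ followed by the elementary $\norm{f'}_{L_2}^{3/2}\norm{f''}_{L_2}^{1/2}\le\tfrac12\norm{f'}_{L_2}^2+\tfrac12\norm{f'}_{L_2}\norm{f''}_{L_2}$ yields the claimed form $\normH{1}{f}(\normH{1}{f}+\normH{2}{f})$. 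I expect the algebra property of the second item to be the only genuinely nontrivial step --- the frequency-space convolution estimate via the Peetre inequality, where the threshold $p>1/2$ is essential --- while the remaining items are elementary weight comparisons, a single Plancherel computation, or standard interpolation; the only care required elsewhere is to track the fixed constants arising from the $\F$-versus-$\hat{\,\cdot\,}$ normalization so that the inequalities hold as stated.
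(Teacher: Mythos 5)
Your proposal is correct in substance and, item by item, follows essentially the same route as the paper: the frequency-side convolution bound for the algebra property (your Peetre splitting plus Cauchy--Bunyakovsky--Schwarz with $\langle\cdot\rangle^{-p}\in L_2$, $p>1/2$, is the same mechanism as the paper's inequality $\abs{\omega+\nu}^{2p}\le 2^{2p-1}(\abs{\omega}^{2p}+\abs{\nu}^{2p})$ combined with its bound of $\norm{\hat f}_{L_1}$ by the Sobolev norm), the power series for $e^f-1$, the Plancherel-plus-rescaling computation of the Gagliardo seminorm, the segment-integration pointwise bound for the fifth item, and differentiation for the last.

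Two caveats. First, the embedding in the first item: as literally written ($H_p\subset H_q$ for $p<q$) it is false --- take $\hat f(\lambda)=\abs{\lambda}^{-1/2-p-\epsilon}\I_{[1,\infty)}(\lambda)$ with $0<\epsilon<q-p$, which lies in $H_p$ but has infinite $\dot H_q$ seminorm. The intended statement, and the one the paper actually needs (e.g.\ so that $\normH{3/4}{f}$ makes sense for $f\in H_2$), is $H_q(\R)\subset H_p(\R)$ for $p<q$. Your displayed inequality $1+\abs{\lambda}^{2p}\le 2(1+\abs{\lambda}^{2q})$ proves exactly this correct direction, but your conclusion that ``the two full Sobolev norms are comparable'' is wrong: the domination is one-sided, and no two-sided comparison can hold. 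Second, in the last item your Gagliardo--Nirenberg step, stated with $\lesssim$, does not deliver the coefficient $1$ claimed in the proposition. Under the paper's convention one has $\normH{k}{f}=(2\pi)^{-1/2}\norm{f^{(k)}}_{L_2}$, so the required estimate is $\norm{f'}_{L_4}^2\le(2\pi)^{-1/2}\norm{f'}_{L_2}(\norm{f'}_{L_2}+\norm{f''}_{L_2})$; the elementary interpolation bound $\norm{u}_{L_4}^2\le\norm{u}_{L_2}^{3/2}\norm{u'}_{L_2}^{1/2}$ followed by AM--GM gives the constant $1/2>(2\pi)^{-1/2}$, i.e.\ misses by a factor $\sqrt{\pi/2}$, and one needs the sharp one-dimensional constant ($3^{-1/4}$, extremizer $\mathrm{sech}$) for the stated inequality to close. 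Since the paper's own justification of this item is a single sentence and every downstream use of the proposition tolerates unspecified constants, this is as much a defect of the statement as of your argument; still, as a proof of the proposition as written, your last step is incomplete unless you track that constant.
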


We are ready to prove the main result of the section.
\begin{proof}[Proof of Lemma \ref{2:remainder_continuity}]
Since $H_p(\R)$ forms a Banach algebra, we have that $\norm{e^f-1}_{H_p}$ is continuous with respect to the norm $\norm{f}_{H_p}$, as well as the norm of $\norm{f_\pm}_{H_p}$ is continuous with respect to the norm $\norm{f}_{H_p}$. Thereby by Lemma \ref{2:diff_tr_est} the operator $\I_{[1, +\infty)}\mathcal{R}_{e^{-f}}\I_{[1, +\infty)}$ is $\mathcal{J}_1$-continuous with respect to $\norm{f}_{H_2}$. Further, the Wiener-Hopf operators $W_{e^{f_\pm}}$ are continuous with respect to the norm $\norm{\hat{f}_\pm}_{L_1}$, which is continuous with respect to $\norm{f}_{H_2}$. 
Recall the inequalities
\begin{equation}\label{6_eq:norm_ineq}
\begin{aligned}
&\norm{K_1K_2}_{\mathcal{J}_1}\le \norm{K_1}_{\mathcal{J}_1}\norm{K_2}_{\mathcal{J}_2}, \quad K_1, K_2\in\mathcal{J}_2, \\
&\norm{K_3B}_{\mathcal{J}_1}\le \norm{K_3}_{\mathcal{J}_1}\norm{B}, \quad K_3\in\mathcal{J}_1, B \text{ is bounded.}
\end{aligned}
\end{equation}
We conclude that the operator
\[
\I_{[1, +\infty)}W_{e^{f_-}}\I_{[1, +\infty)}\mathcal{R}_{e^{-f}}\I_{[1, +\infty)}W_{e^{f_+}}\I_{[1, +\infty)}
\]
is $\mathcal{J}_1$-continuous with respect to $\norm{f}_{H_2}$.
Using the first inequality \eqref{6_eq:norm_ineq} and Lemmata \ref{6:rem_est}, \ref{6:comm_formula} we have that the operator
\[
\I_{[1, +\infty)}W_{e^{f_-}}\I_{[1, +\infty)}[G_{e^{-f_+}}, G_{e^{-f_-}}]\I_{[1, +\infty)}W_{e^{f_+}}\I_{[1, +\infty)}
\]
is $\mathcal{J}_1$-continuous with respect to $\norm{f}_{H_2}$. Thereby $\mathcal{K}$ is $\mathcal{J}_1$-continuous with respect to $\norm{f}_{H_2}$ and $Y(f)$ is continuous with respect to $\norm{f}_{H_2}$. The continuity of the exponential factor in the formula for $Q(f)$ follows from the second claim of Lemma \ref{2:diff_tr_est}. This proves the continuity of $Q(f)$ --- the first assertion of Lemma \ref{2:remainder_continuity}.

The second claim follows from the inequality
\[
\abs{\det(I+\mathcal{K})-1}\le \norm{\mathcal{K}}_{\mathcal{J}_1}e^{\norm{\mathcal{K}}_{\mathcal{J}_1}},
\]
The estimate for $\norm{\mathcal{K}}_{\mathcal{J}_1}$ is obtained from Lemma \ref{2:diff_tr_est} and the second inequality \eqref{6_eq:norm_ineq}, applied to the operator
\[
\I_{[1, +\infty)}W_{e^{f_-}}\I_{[1, +\infty)}\mathcal{R}_{e^{-f}}\I_{[1, +\infty)}W_{e^{f_+}}\I_{[1, +\infty)};
\]
and the first inequality \eqref{6_eq:norm_ineq}, Lemmata \ref{6:comm_formula}, \ref{6:rem_est} applied to the operator
\[
\I_{[1, +\infty)}W_{e^{f_-}}\I_{[1, +\infty)}[G_{e^{-f_+}}, G_{e^{-f_-}}]\I_{[1, +\infty)}W_{e^{f_+}}\I_{[1, +\infty)}.
\]
Last, one employs the last two statements of Proposition \ref{6:sob_prop}
\end{proof}

\begin{proof}[Proof of Proposition \ref{6:sob_prop}]
The first claim follows from a direct calculation. 

To prove the second claim first observe that
\[
\abs{\omega + \nu}^{2p}\le 2^{2p-1}(\abs{\omega}^{2p}+\abs{\nu}^{2p}).
\]
Thereby we may write for the $p$-seminorm
\[
	2^{1-p}\normH{p}{fg}^2 \le \int_{\R^2}d\omega d\nu\abs{\nu-\omega}^{2p}\abs{\hat{f}(\nu -\omega)\hat{g}(\omega)}^2d\omega d\nu + \int_{\R^2}d\omega d\nu\abs{\omega}^{2p}\abs{\hat{f}(\nu -\omega)\hat{g}(\omega)}^2d\omega,
\]
where the right-hand side is at most
\[
\normH{p}{f}^2\norm{\hat{g}}_{L_1} + \normH{p}{f}^2\norm{\hat{f}}_{L_1}
\]
by the Young convolution inequality. We are then left to observe that
\[
\norm{\hat{f}}_{L_1} \le \norm{f}_{L_2} + \frac{2}{\sqrt{2p-1}}\normH{p}{f},
\]
which follows from the application of Cauchy-Bunyakovsky-Schwarz inequality to two terms of
\[
\int_\R\hat{f}(\omega)d\omega \le \int_{[-1, 1]}\hat{f}(\omega)d\omega + \int_{\R\setminus[-1, 1]}\frac{1}{\abs{\omega}^p}\abs{\omega}^p\hat{f}(\omega)d\omega.
\]

Let us prove the fourth statement. After one expresses the left-hand side
\[
\frac{1}{2}\int_{\R^2}\frac{\abs{f(x)-f(y)}^2}{\abs{x-y}^{1+2p}}dxdy = \frac{1}{2}\int_\R \left(\int_\R \abs*{\frac{f(x+y)-f(x)}{\abs{y}^{1/2+p}}}^2dx\right)dy,
\]
uses the Parseval identity and observes that
\[
\frac{1}{2\pi}\int_\R e^{-i\omega x}\frac{f(x+y)-f(x)}{\abs{y}^{1/2+p}}dx = \frac{e^{i\omega y}-1}{\abs{y}^{1/2+p}}\hat{f}(\omega),
\]
one concludes
\[
\frac{1}{2}\int_{\R^2}\frac{\abs{f(x)-f(y)}^2}{\abs{x-y}^{1+2p}}dxdy = \pi \int_{\R^2}\frac{\abs{e^{i\omega y}-1}^2}{\abs{y}^{1+2p}}\abs{\hat{f}(\omega)}^2d\omega dy = C_p\normH{p}{f}^2.
\]

The fifth claim follows from the fourth one and inequality
\[
\abs{e^x-e^y}\le e^{x+y}\abs{x-y}.
\]

The last claim may be verified by calculating $L_2$ norms of derivatives.
\end{proof}

\section{Proof of Corollary \ref{1:KS_conv}}\label{sect:corr_proof}
Let us first recall the Feller smoothing estimate.
\begin{theorem}[{\cite[p. 538]{F_66}}]\label{7:feller_est}
Assume we are given two real-valued random variables with distribution functions $F_1$, $F_2$ and characteristic functions $\varphi_1$, $\varphi_2$. For any $T>0$ we have that
\[
\sup_{x\in \R}\abs{F_1(x)-F_2(x)} \le \frac{24}{\sqrt{2\pi^2}T} +\frac{1}{\pi}\int_{-T}^T\abs*{\frac{\varphi_1(y)-\varphi_2(y)}{y}}dy.
\]
\end{theorem}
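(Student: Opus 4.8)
The plan is to prove the estimate by the classical Fourier smoothing argument: one replaces the difference $\Delta = F_1 - F_2$ of the two distribution functions, which may be rough, by a smoothed version whose Fourier transform is supported on $[-T,T]$, so that only the values of $\varphi_1-\varphi_2$ on $[-T,T]$ enter, and then one estimates the error committed by smoothing. The function $\Delta$ is of bounded variation, satisfies $\abs{\Delta}\le 1$, and vanishes at $\pm\infty$; set $\eta = \sup_{x\in\R}\abs{\Delta(x)}$. I note at the outset that the estimate is to be read with the comparison distribution $F_2$ possessing a bounded density, say $\abs{F_2'}\le m$ --- the hypothesis under which Feller states it --- since without it two nearby atoms already violate the bound; in the application of Corollary \ref{1:KS_conv} one has $F_2 = F_{\mathcal{N}}$ with $m = 1/\sqrt{2\pi}$, and it is this value that the constant $24/\sqrt{2\pi^2}$ encodes.

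The first step is carried out on the Fourier side. I would introduce the Fej\'er smoothing density $v_T(y) = \frac{1-\cos Ty}{\pi T y^2}$, which has total mass $1$ and characteristic function $\widehat{v_T}(\zeta) = (1-\abs{\zeta}/T)_+$ supported on $[-T,T]$. Integrating by parts in $\int_\R e^{i\zeta x}\,d\Delta(x) = \varphi_1(\zeta)-\varphi_2(\zeta)$ identifies the Fourier transform of $\Delta$ with $(\varphi_2-\varphi_1)/(i\zeta)$, so that Fourier inversion expresses the convolution as
\[
(\Delta * v_T)(x) = \frac{1}{2\pi}\int_{-T}^T \frac{\varphi_2(\zeta)-\varphi_1(\zeta)}{i\zeta}\Bigl(1-\frac{\abs{\zeta}}{T}\Bigr)e^{-i\zeta x}\,d\zeta.
\]
Bounding the integrand by its modulus and discarding the factors that are at most $1$ yields the uniform estimate $\sup_x\abs{(\Delta*v_T)(x)} \le \frac{1}{2\pi}\int_{-T}^T\abs*{\frac{\varphi_1(\zeta)-\varphi_2(\zeta)}{\zeta}}\,d\zeta$, which is exactly one half of the integral term in the statement.

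The second and decisive step is the \emph{smoothing lemma}: one must recover the sup of the unsmoothed $\Delta$ from that of $\Delta*v_T$, namely $\eta \le 2\sup_x\abs{(\Delta*v_T)(x)} + \frac{24m}{\pi T}$. Assume the supremum is a positive value approached near a point $x_0$ (the negative case being symmetric). Since $F_1$ is nondecreasing and $\abs{F_2'}\le m$, one has the one-sided bound $\Delta(x_0+s)\ge \eta - ms$ for every $s\ge 0$, so $\Delta$ remains of order $\eta$ on an interval of length $\sim \eta/m$ to the right of $x_0$. Placing the evaluation point $x^\ast$ just inside this interval and splitting $(\Delta*v_T)(x^\ast) = \int_\R \Delta(x^\ast - y)v_T(y)\,dy$ into a central window $\abs{y}\le\delta$, where the one-sided bound forces $\Delta\ge \eta/2$, and a tail $\abs{y}>\delta$, where $\abs{\Delta}\le\eta$ is weighted by the Fej\'er tail mass $\int_{\abs{y}>\delta}v_T\,dy \le \frac{4}{\pi T\delta}$, gives $(\Delta*v_T)(x^\ast)\ge \frac{\eta}{2} - \frac{3\eta}{2}\int_{\abs{y}>\delta}v_T\,dy$. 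Choosing $\delta\sim \eta/m$ so that the window bound holds turns the tail into a term of order $m/T$; the crude split just sketched already produces a constant of the correct order, and Feller's careful optimization of the window width against the tail pins it down to $24$.

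Combining the two steps gives $\eta\le 2\cdot\frac{1}{2\pi}\int_{-T}^T\abs*{\frac{\varphi_1-\varphi_2}{\zeta}}\,d\zeta + \frac{24m}{\pi T}$, which is the asserted inequality once $m=1/\sqrt{2\pi}$ is inserted and one uses $\frac{24}{\pi\sqrt{2\pi}}\le\frac{24}{\sqrt{2\pi^2}}$ (equivalent to $1\le\pi$). I expect the Fourier-inversion step to be routine. The genuine obstacle is the smoothing lemma: the geometric argument controlling how much of the extremal value $\eta$ survives convolution must be coupled with a sharp tail bound for $v_T$ and the window width optimized carefully, and it is here that the bounded-density hypothesis on $F_2$ is indispensable and the explicit constant is determined.
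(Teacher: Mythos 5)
Your proposal is correct and is essentially the argument the paper relies on: the paper does not prove this statement but imports it verbatim from Feller, and your Fej\'er-kernel smoothing step plus the Esseen-type recovery lemma (with window width $\delta = \eta/2m$, tail mass $4/(\pi T\delta)$, and the evenness of $v_T$ cancelling the linear term so that the split really does give $\eta \le 2\eta_T + 24m/(\pi T)$) is precisely Feller's proof. You are also right on the two fine points: the bounded-density hypothesis on $F_2$ is tacitly required (the paper's literal statement omits it, but only $F_2 = F_{\mathcal{N}}$ with $m = 1/\sqrt{2\pi}$ is ever used, in Section \ref{sect:corr_proof}), and the paper's constant $24/(\sqrt{2\pi^2}\,T)$ is just the weakening $24/(\pi\sqrt{2\pi}) \le 24/(\pi\sqrt{2})$ of Feller's bound.
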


Next recall the notation. By $\overline{S}_f$ we denoted the regularized additive functionals (see Definition~\ref{3:reg_def}). We defined
\[
    F_R(x) = \P^s(\overline{S}_{f(x/R)}\le x),\quad F_{\mathcal{N}}(x)=\frac{1}{\sqrt{2\pi}}\int_{-\infty}^xe^{-\frac{t^2}{2}}dt.
\]
Below using Theorem \ref{7:feller_est} and Theorem \ref{1:mult_formula} we give an estimate for the difference $\abs{F_R(x)-F_{\mathcal{N}}(x)}$.

\begin{proof}[Proof of Corollary \ref{1:KS_conv}]
Observe that the expression
\[
\int_0^\infty \omega\hat{f}(\omega)\hat{f}(-\omega)d\omega
\]
is invariant under the dilation of the argument. Further, we have
\[
\normH{p}{f(\cdot /R)} = R^{\frac{1}{2}-p}\normH{p}{f}.
\]
Denote $\varphi_R(k) = \E e^{ik\overline{S}_{f(\cdot /R)}}$, $\varphi_{\mathcal{N}}(k) = e^{-k^2/2}$. By Theorem \ref{1:mult_formula} we have
\[
\abs*{\frac{\varphi_R(k)-\varphi_{\mathcal{N}}(k)}{k}} = \abs*{\frac{Q(ikf(\cdot /R))-1}{k}} \le Ce^{CL(ikf(\cdot /R))}\frac{L(ikf(\cdot/R))}{\abs{k}},
\]
where for any $R \ge 2$ there exists a constant $\tilde{C}$ such that
\[
\frac{L(ikf(\cdot/R))}{\abs{k}} \le \frac{\tilde{C}}{R^{1/4}}(1+\abs{k})e^{\tilde{C}\abs{k}}, \quad Ce^{CL(ikf(\cdot /R))} \le \tilde{C}e^{R^{-1/4}\tilde{C}\abs{k}^2e^{\tilde{C}\abs{k}}}.
\]
We deduce that for some constant $C'$ the following inequality holds
\[
\int_{-T}^T\abs*{\frac{\varphi_R(k)-\varphi_{\mathcal{N}}(k)}{k}}dk \le \frac{2T^2C'}{R^{1/4}}e^{R^{-1/4}C'T^2e^{\tilde{C}T}}.
\]
The claim follows from substituting $T=\frac{1}{8}\ln R$ and using Theorem \ref{7:feller_est}.
\end{proof}

    \appendix
    \section{Trace class and Hilbert-Schmidt operators}\label{appendix:tr_class_op}

In this section we recall basic definitions and statements about ideals of trace class and Hilbert-Schmidt operators. We refer the reader \cite{S_05, S_15} for the detailed exposition of the theory.

Recall that for a compact operator $K\in\mathcal{K}(\mathcal{H})$ on a separable Hilbert space $\mathcal{H}$ its singular values $\{s_n(K)\}_{n \in \N}$ are defined as eigenvalues of $\abs{K}=\sqrt{K^*K}$. For $p>0$ introduce the space
\[
\mathcal{J}_p = \{K\in \mathcal{K}(\mathcal{H}): \sum_{n\in\N}\abs{s_n(K)}^p<+\infty\},
\]
which is a Banach space if endowed with the norm
\[
\norm{K}_{\mathcal{J}_p}^p = \sum_{n\in\N}\abs{s_n(K)}^p.
\]
These spaces for $p=1$ and $p=2$ are called spaces of trace class and Hilbert-Schmidt operators respectively.

For an operator $K\in\mathcal{J}_1(\mathcal{H})$ and an arbitrary orthonormal basis $\{e_j\}_{j\in \N}$ define its trace
\[
\Tr{K} = \sum_{j\in\N}\langle e_j, K e_j\rangle_{\mathcal{H}},
\]
which is a $\norm{\cdot}_{\mathcal{J}_1}$-continuous function and is independent of the choice of a basis. The Fredholm determinant is then defined by the formula
\[
\det(I+K) = \sum_{j\ge 0}\Tr (\wedge^k K).
\]
The determinant is again $\norm{\cdot}_{\mathcal{J}_1}$-continuous. In particular, we have
\begin{equation*}
	\abs{\det(I+K)-1}\le \norm{K}_{\mathcal{J}_1}e^{\norm{K}_{\mathcal{J}_1}}, \quad \abs{\Tr K}\le \norm{K}_{\mathcal{J}_1}.
\end{equation*}
As for the usual determinant, we have
\begin{equation*}
	\det e^K = e^{\Tr K}.
\end{equation*}
Further, the Fredholm determinant is invariant under conjugation by an invertible operator $X$:
\begin{equation*}
\det(X^{-1}(I+K)X) = \det(I+K).
\end{equation*}
For $K_1$, $K_2\in\mathcal{J}_1$ we have
\begin{equation*}
\det((I+K_1)(I+K_2))=\det(I+K_1)\det(I+K_2).
\end{equation*}

If an operator $K$ is Hilbert-Schmidt but not trace class, one may introduce a regularization of the Fredholm determinant. Approximate $K$ by its finite-dimensional projections $K_n\overset{\mathcal{J}_2}{\to} K$. Define
\[
\det{}_2(I+K)=\lim_{n\to\infty}\det(I+K_n)e^{-\Tr(K)},
\]
where the limit exists since the function on the right-hand side is $\norm{\cdot}_{\mathcal{J}_2}$-continuous. Their limit --- the function on the left-hand side --- is thereby also $\norm{\cdot}_{\mathcal{J}_2}$-continuous.

Last, we mention that the Hilbert-Schmidt norm of an integral operator $K$ coincides with the $L_2$-norm of its kernel
\[
\norm{K}_{\mathcal{J}_2}^2 = \int_{X^2}\abs{K(x, y)}^2d\mu(x)d\mu(y).
\]


\begin{thebibliography}{99}

\bibitem{AS_64} M. Abramowitz, I. A. Stegun,
\textit{Handbook of Mathematical Functions with Formulas, Graphs, and Mathematical Tables},
10th ed., U.S. Government Printing Office, New York, 1964.

\bibitem{B_97} E. L. Basor,
\textit{Distribution Functions for Random Variables for Ensembles of Positive Hermitian Matrices}, 
Comm. Math. Phys. \textbf{188}(1997), 327-350.
 
\bibitem{BC_03} E. L. Basor, Y. Chen, 
\textit{A Note on Wiener-Hopf Determinants and the Borodin-Okounkov Identity}, 
Integral Equations Operator Theory 
\textbf{45}(2003), 301-308.

\bibitem{BE_03} E.L. Basor, T. Ehrhardt, 
\textit{Asymptotics of determinants of Bessel operators}, 
Comm. Math. Phys. \textbf{234}(2003), 491–516.

\bibitem{BE_03B} E. L. Basor, T. Ehrhardt,
\textit{Determinant computations for some classes of Toeplitz–Hankel matrices},
arXiv:0804.3073.

\bibitem{BE_04} E. L. Basor, T. Ehrhardt,
\textit{Wiener-Hopf Determinants with Fisher-Hartwig Symbols},
In: Operator Theoretical Methods and Applications to Mathematical Physics, Oper. Theory Adv. Appl. \textbf{147}(2004),
Birkh\"auser, Basel, 131-149.

\bibitem{BEW_03} E. L. Basor, T. Ehrhardt, H. Widom,
\textit{On the determinant of a certain Wiener–Hopf + Hankel operator},
arXiv:math/0304002.

\bibitem{BW_99} E. L. Basor, H. Widom,
\textit{Determinants of Airy Operators and Applications to Random Matrices}, 
J. Stat. Phys. \textbf{96}(1999), 1-20.

\bibitem{BW_00} E. L. Basor, H. Widom,
\textit{On a Toeplitz determinant identity of Borodin and Okounkov},
 Integral Equations Operator Theory, \textbf{37}(2000), 397-401.
 
 \bibitem{B_18} D. Betea,
 \textit{Correlations for symplectic and orthogonal Schur measures},
 arXiv:1804.08495.
 
 \bibitem{B_02} A. B\"oettcher,
\textit{On the determinant formulas by Borodin, Okounkov, Baik, Deift, and Rains},
In: Toeplitz matrices and singular integral equations, Oper. Theory Adv. Appl. \textbf{135}(2002),
Birkh\"auser Verlag, Basel, 91-99.

\bibitem{BS_06} A. B\"{o}ttcher, B. Silbermann, 
	\textit{Analysis of Toeplitz Operators}, 
	Springer Monographs in Mathematics, 2nd ed., 
	Springer Berlin, Heidelberg, 2006.

\bibitem{BD_01} A. Borodin, P. Deift,
\textit{Fredholm determinants, Jimbo-Miwa-Ueno tau-functions, and representation theory},
Comm. Pure Appl. Math. \textbf{55}(2002), 1160-1230.

\bibitem{BO_00} A. Borodin, A. Okounkov,
\textit{A Fredholm determinant formula for Toeplitz determinants},
Integral Equations Operator Theory \textbf{37}(2000), 386-396.

\bibitem{BO_01} A. Borodin, G. Olshanski, 
\textit{Infinite Random Matrices and Ergodic Measures}, 
Comm. Math. Phys. \textbf{223}(2001), 87-123.

\bibitem{BNR_06} P. Bourgade, A. Nikeghbali, A. Rouault, 
\textit{Ewens measures on compact groups and hypergeometric kernels}, 
In: Donati-Martin, C., Lejay, A., Rouault, A. (eds),
\textit{Séminaire de Probabilités XLIII. Lecture Notes in Mathematics}, 
vol. 2006, Springer, Berlin, Heidelberg.

\bibitem{B_18Q} A. I. Bufetov,
\textit{Quasi-symmetries of determinantal point processes},
Ann. Probab. \textbf{46}(2018), 956-1003.

\bibitem{B_19} A. I. Bufetov,
\textit{The sine-process has excess one},
arXiv:1912.13454

\bibitem{B_23} A. I. Bufetov, 
\textit{A Palm hierarchy for the decomposing measure in the problem of harmonic analysis on the infinite-dimensional unitary group, the determinantal point process with the confluent hypergeometric kernel}, 
Algebra i Analiz \textbf{35}(2023), 39-63.

\bibitem{B_24} A. I. Bufetov,
\textit{The expectation of a multiplicative functional under the sine-process},
Funktsional. Anal. i Prilozhen. \textbf{58}(2024), 23-33.

\bibitem{B_25} A. I. Bufetov,
\textit{Speed of convergence under the Kolmogorov–Smirnov metric in the Soshnikov central limit theorem for the sine-process},
Funktsional. Anal. i Prilozhen., \textbf{59}(2025), 11-16.

\bibitem{DIK_11} P. Deift, A. Its, I. Krasovsky,
\textit{Asymptotics of Toeplitz, Hankel, and Toeplitz+Hankel determinants with Fisher-Hartwig singularities},
Ann. of Math., \textbf{174}(2011), 1243-1299.

\bibitem{DZ_93} P. Deift, X. Zhou,
\textit{A steepest descent method for oscillatory Riemann–Hilbert problems. Asymptotics for the MKdV equation},
Ann. of Math., \textbf{137}(1993), 295-368.

\bibitem{E_97} T. Ehrhardt,
\textit{Toeplitz Determinants with One Fisher–Hartwig Singularity},
J. Funct. Anal. \textbf{148}(1997), 229-256.
   
\bibitem{E_03} T. Ehrhardt, 
\textit{A generalization of Pincus' formula and Toeplitz operator determinants}, 
Arch. Math. \textbf{80}(2003), 302-309.

\bibitem{E_03SZ} T. Ehrhardt,
\textit{A new algebraic approach to the Szegő-Widom limit theorem},
Acta Mathematica Hungarica \textbf{99}(2003), 233-262.

\bibitem{K_22} K. Kozlowski,
\textit{Truncated Wiener-Hopf operators with Fisher Hartwig singularities},
arXiv:0805.3902.

\bibitem{F_66} W. Feller, 
\textit{An Introduction to Probability Theory and Its Applications, Vol. 2}, 
John Wiley \& Sons, 1966.

\bibitem{G_24} S. M. Gorbunov, 
\textit{Rate of convergence in the Central Limit Theorem for the determinantal point process with the Bessel kernel}, 
Sbornik: mathematics \textbf{215}(2024).

\bibitem{G_25}S. M. Gorbunov,
\textit{Unitary transform diagonalizing the Confluent Hypergeometric kernel},
arXiv:2504.09732.

\bibitem{M_75} O. Macchi, 
\textit{The coincidence approach to stochastic point processes}, 
Adv. in Appl. Probab. \textbf{7}(1975), 83-122.

\bibitem{S_05OPUC} B. Simon, 
\textit{Orthogonal Polynomials on the Unit Circle, Part 1: Classical Theory}, 
AMS Colloquium Series, 
vol. 54, Amer. Math. Soc., Providence,
RI, 2005.

\bibitem{S_05} B. Simon, 
\textit{Trace Ideals and Their Applications}, 
Mathematical Surveys and Monographs, 
vol. 120, 2nd ed., Amer. Math. Soc., Providence, RI, 2005.

\bibitem{S_15} B. Simon, 
\textit{Operator Theory}, 
Amer. Math. Soc., Providence, RI, 2015.

\bibitem{S_00} A. Soshnikov, 
\textit{Determinantal random point fields}, 
Russian Math. Surveys \textbf{55}(2000), 923-975.

\bibitem{S_00CLT} A. Soshnikov,
\textit{The central limit theorem for local linear statistics in classical compact groups and related combinatorial identities},
Ann. Probab. \textbf{28}(2000), 1353-1370.

\bibitem{TW_93} C. Tracy, H. Widom, 
\textit{Level-Spacing Distributions and the Airy Kernel}, 
Phys. Lett. B \textbf{305}(1993), 115-118.

\bibitem{TW_94} C. Tracy, H. Widom, 
\textit{Level spacing distributions and the Bessel kernel}, 
Comm. Math. Phys. \textbf{161}(1994), 289-309.

\bibitem{W_82} H. Widom, 
\textit{A trace formula for Wiener-Hopf operators}, 
Operator Theory \textbf{8}(1982), 279-298.
\end{thebibliography}
\end{document}